\documentclass[12pt,leqno]{amsart}
\usepackage{amsmath,amssymb,amsfonts}
\usepackage{eucal,enumerate}
\usepackage{xypic,graphicx,psfrag}
\usepackage{mathrsfs}
\usepackage{hyperref}

\usepackage{color} %SDC: so xfig's pstex_t file works.

\usepackage{tikz}
\usetikzlibrary{decorations.pathreplacing}
%%%<
%\usepackage{verbatim}
%\usepackage[active,tightpage]{preview}
\usepackage{scalefnt}
%\PreviewEnvironment{tikzpicture}
%\setlength\PreviewBorder{5pt}%
%%%>

\pagestyle{plain}                     % makes page numbers appear
\setlength{\textwidth}{6.5in}
\setlength{\oddsidemargin}{0.0in}
\setlength{\evensidemargin}{0.0in}
\setlength{\textheight}{9in}
\setlength{\topmargin}{-.5in}

\newcommand\sectionpage{\newpage}	% New page at section.
\renewcommand\sectionpage{}	% No new page at section.

\hyphenation{night-rider}

\newtheorem{lem}{Lemma}[section]

\newtheorem{prop}[lem]{Proposition}
\newtheorem{thm}[lem]{Theorem}

\theoremstyle{definition}
\newtheorem{conj}[lem]{Conjecture}
\newtheorem{exam}[lem]{Example}
\newtheorem{problem}{Problem}

\numberwithin{equation}{section}
\numberwithin{table}{section}
\numberwithin{figure}{section}

\allowdisplaybreaks

\newcommand\vstrut[1]{\rule{0ex}{#1}}

\renewcommand\mod{\, \operatorname{mod}\, }
\renewcommand{\phi}{\varphi} 
\renewcommand{\epsilon}{\varepsilon}

\newcommand\cA{\mathscr{A}}		% script for arrangements et al.
\newcommand\cB{\mathcal{B}}
\newcommand\cBo{{\cB^\circ}}
\newcommand\cE{\mathcal{E}}
\renewcommand\cH{\mathcal{H}}	% \cH seems to be defined by xypic.	\mathcal for subspaces
\newcommand\cI{\mathcal{I}}
	% \cL seems to be defined by xypic.
\newcommand\cP{\mathcal{P}}
	% \cU for bishops subspace	\mathcal for subspaces

\newcommand\cX{\mathcal{X}}
\newcommand\cY{\mathcal{Y}}
\newcommand\bbR{\mathbb{R}}
\newcommand\bbZ{\mathbb{Z}}
\newcommand\pN{\mathbb N}	% Nightrider
\newcommand\pP{\mathbb P}	% Piece
\newcommand\pQ{\mathbb Q}	% Queen

\newcommand\bz{\mathbf z}

\newcommand\lcm{\operatorname{lcm}}

\newcommand\M{\mathbf{M}}
\newcommand\Kot{Kot\v{e}\v{s}ovec}
\newcommand\cube{[0,1]^{2q}}
\newcommand\ocube{(0,1)^{2q}}

\allowdisplaybreaks

%%%%%%%%%%%%%%%%%%%%%%%%%%%%%%%%
%%%%%%%%%%%%%%%%%%%%%%%%%%%%%%%%
%%%%%%%%%%%%%%%%%%%%%%%%%%%%%%%%
\begin{document}

\title{A $q$-Queens Problem \\
IV.  Attacking Configurations \\and Their Denominators}

\author{Seth Chaiken}
\address{Computer Science Department\\ The University at Albany (SUNY)\\ Albany, NY 12222, U.S.A.}
\email{\tt sdc@cs.albany.edu}

\author{Christopher R.\ H.\ Hanusa}
\address{Department of Mathematics \\ Queens College (CUNY) \\ 65-30 Kissena Blvd. \\ Queens, NY 11367-1597, U.S.A.}
\email{\tt chanusa@qc.cuny.edu}

\author{Thomas Zaslavsky}
\address{Department of Mathematical Sciences\\ Binghamton University (SUNY)\\ Binghamton, NY 13902-6000, U.S.A.}
\email{\tt zaslav@math.binghamton.edu}

\begin{abstract}
In Parts I--III we showed that the number of ways to place $q$ nonattacking queens or similar chess pieces on an $n\times n$ chessboard is a quasipolynomial function of $n$ whose coefficients are essentially polynomials in $q$. 

In this part we focus on the periods of those quasipolynomials.  We calculate denominators of vertices of the inside-out polytope, since the period is bounded by, and conjecturally equal to, their least common denominator.  We find an exact formula for that denominator of every piece with one move and of two-move pieces having a horizontal move.  For pieces with three or more moves, we produce geometrical constructions related to the Fibonacci numbers that show the denominator grows at least exponentially with $q$. 
\end{abstract}

\subjclass[2010]{Primary 05A15; Secondary 00A08, 52C07, 52C35.}
%00A08 Recreational mathematics. 05A15 Exact enumeration problems, generating functions. 52C07 Lattices and convex bodies in n dimensions.  52C35 Arrangements of points, flats, hyperplanes.

\keywords{Nonattacking chess pieces, fairy chess pieces, riders, Ehrhart theory, inside-out polytope, arrangement of hyperplanes, chess piece configurations, vertex denominators, trajectories, discrete Fibonacci spiral, golden rectangle configuration, twisted Fibonacci spiral}

\thanks{Version of \today.}
\thanks{The outer authors thank the very hospitable Isaac Newton Institute for facilitating their work on this project. The inner author gratefully acknowledges support from PSC-CUNY Research Awards PSCOOC-40-124, PSCREG-41-303, TRADA-42-115, TRADA-43-127, and TRADA-44-168.}

\maketitle
\vspace{-.3in}
\pagestyle{myheadings}
\markright{\textsc{A $q$-Queens Problem. IV. Configurations and Denominators}}\markleft{\textsc{Chaiken, Hanusa, and Zaslavsky}}

\tableofcontents

%%%%%%%%%%%%%%%%%%%%%%%%%%%%%%%%%%%%%%%%%%%%%%%%%%
\sectionpage
\section{Introduction}\label{intro}

The famous $n$-Queens Problem asks for the number of arrangements of $n$ nonattacking queens---the largest possible number---on an $n\times n$\label{d:n} chessboard.\footnote{For surveys of the $n$-Queens Problem see, for instance, \cite{8Q, BS}.}  
There is no known general formula, other than the very abstract---that is to say impractical---one we obtained in Part~II and more recently the concrete but also impractical permanental formula of Pratt \cite{P}.  Solutions have been found only by individual analyses for small $n$.  

In this series of six papers \cite{QQs1, QQs2, QQs3, QQs5, QQs6}\footnote{Some of this paper was contained in the first version of Part~IV, announced in Parts~I and II, which is now mostly Part~V.}
we treat the problem by separating the board size, $n$, from the number of queens, $q$,\label{d:q} and rephrasing the whole problem in terms of geometry.  
We also generalize to every piece $\pP$ whose moves are, like those of the queen, rook, and bishop, unlimited in length.  Such pieces are known as ``riders'' in fairy chess (chess with modified rules, moves, or boards); an example is the nightrider,\label{N} whose moves are those of the knight extended to any distance.  
The problem then, given a fixed rider $\pP$, is:

\begin{problem}\label{Pr:formula}
Find an explicit, easily evaluated formula for $u_\pP(q;n)$\label{d:indistattacks}, the number of nonattacking configurations of $q$ unlabelled pieces $\pP$ on an $n \times n$ board.
\end{problem}

One would wish there to be a single style of formula that applies to all riders.  This ideal is realized to an extent by our work.  We proved in Part~I that for each rider $\pP$, $u_\pP(q;n)$ is a quasipolynomial function of $n$ of degree $2q$ and that the coefficients of powers of $n$ are given by polynomials in $q$, up to a simple normalization; for instance, the leading term is $n^{2q}/q!$.  Being a quasipolynomial means that for each fixed $q$, $u_\pP(q;n)$ is given by a cyclically repeating sequence of polynomials in $n$ (called the \emph{constituents} of the quasipolynomial); the shortest length of such a cycle is the \emph{period} of $u_\pP(q;n)$.  
That raises a fundamental question.

\begin{problem}\label{Pr:period}
What is the period $p$\label{d:p} of the quasipolynomial formula for $u_\pP(q;n)$?  (The period, which pertains to the variable $n$, may depend on $q$.)
\end{problem}

The period tells us how much data is needed to rigorously determine the complete formula; $2qp$ values of the counting function determine it completely, since the degree is $2q$ and the leading coefficient is known.  V\'aclav \Kot\ \cite{ChMathWeb, ChMath} has expertly used this computational approach to make educated guesses for many counting functions; proving the formula requires a proof of the period.
The difficulty with this computational approach is that, in general, $p$ is hard to determine and seems usually to explode with increasing $q$.  (We have reason to believe the period increases at least exponentially for any rider with at least three moves; see Theorem~\ref{T:exponential}.)  
A better way would be to find information about the $u_\pP(q;n)$ that is valid for all $q$.  For instance:

\begin{problem}\label{Pr:coeffs}
For a given piece $\pP$, find explicit, easily evaluated formulas for the coefficients of powers of $n$ in the quasipolynomials $u_\pP(q;n)$, valid for all values of $q$.
\end{problem}

A complete solution to Problem \ref{Pr:coeffs} would solve Problem \ref{Pr:formula}.  We think that is unrealistic but we have achieved some results.  
In Part~I we took a first step: in each constituent polynomial, the coefficient $\gamma_i$ of $n^{2q-i}$ is (neglecting a denominator of $q!$) itself a polynomial in $q$ of degree $2i$, that varies with the residue class of $n$ modulo a period $p_i$ that is independent of $q$.  In other words, if we count down from the leading term there is a general formula for the $i$th coefficient as a function of $q$ that has its own intrinsic period; the coefficient is independent of the overall period $p$.  
This opens the way to explicit formulas and in Part~II we found such a formula for the second leading coefficient as well as the complete quasipolynomial for an arbitrary rider with only one move---an unrealistic game piece but mathematically informative.  
In Part~III we found the third and fourth coefficients by concentrating on \emph{partial queens}, whose moves are a subset of the queen's.  

In the current part we push the same method in another direction by focusing on the effect of the number of moves.  
In our geometrical approach the boundaries of the square determine a hypercube in $\bbR^{2q}$\label{d:configsp} and the attack lines determine hyperplanes whose $1/(n+1)$-fractional points within the hypercube represent attacking configurations, which must be excluded; the nonattacking configurations are the $1/(n+1)$-fractional points inside the hypercube and outside every hyperplane, so it is they we want to count.  
The combination of the hypercube and the hyperplanes is an \emph{inside-out polytope} \cite{IOP}.  % (Section \ref{partI} has complete definitions.)  
We apply the inside-out adaptation of Ehrhart lattice-point counting theory, in which we combine by M\"obius inversion the numbers of lattice points in the polytope that are in each intersection subspace of the hyperplanes.  
The Ehrhart theory implies quasipolynomiality of the counting function and that the period divides the \emph{denominator} $D$,\label{d:D} defined as the least common multiple of the denominators of all coordinates of all vertices of the inside-out polytope.  %(Vertices are defined in Section \ref{prelim}.)  
We investigate the denominators of individual vertices to provide a better understanding of the period. 
A consequence is an exact formula for the denominator of a one-move rider in Proposition~\ref{p:1moveIV}.  We apply the notion of trajectories from Hanusa and Mahankali \cite{Arvind} to prove a formula for the denominator of a two-move rider with a horizontal move in Proposition~\ref{P:denom2move}.  Theorem~\ref{T:exponential} proves that when a piece has three or more moves, by letting the number $q$ of pieces increase we obtain a sequence of inside-out polytope vertices with denominators that increase exponentially, and the polytope denominators may increase even faster.  These vertices arise from geometrical constructions related to Fibonacci numbers.

In general in Ehrhart theory the period and the denominator need not be equal and often are not, so it is surprising that in all our examples, and for any rider with only one move, they are.  We cannot prove that is always true for the inside-out polytopes arising from the problem of nonattacking riders, but this observation suggests that our work may be a good test case for understanding the relationship between denominators and periods. 

\medskip
A summary of this paper:  Section \ref{partI} recalls some essential notation and formulas from Parts~I--III, continuing on to describe the concepts we use to analyze the periods.
We turn to the theory of attacking configurations of pieces with small numbers of moves in Sections~\ref{sec:config1m}--\ref{sec:4move}, partly to establish formulas and conjectural bounds for the denominators of their inside-out polytopes, especially for partial queens and nightriders, and partly to support the exponential lower bound on periods and our many conjectures.  

We append a dictionary of notation for the benefit of the authors and readers.

%%%%%%%%%%%%%%%%%%%%%%%%%%%%
\sectionpage\section{Essentials}\label{partI}

%=======
\subsection{From before}\

Each configuration-counting problem arises from three choices: a chess board, a chess piece, and the number of pieces.  (The size of the board is considered a variable within the problem.)  

The board $\cB$,\label{d:cB} is any rational convex polygon, i.e., it has rational corners.  (We call the vertices of $\cB$ its \emph{corners} to avoid confusion with other points called vertices.)
The pieces are placed on the integral points in the interior of an integral dilation of $\cB$.  In many cases we will consider our board to be the unit square $\cB=[0,1]^2$, in that the $n\times n$ chessboard corresponds to the interior points of the $(n+1)$-dilation of $\cB$.  We call the open or closed unit square the ``(square) board''; it will always be clear which board we mean.  

The piece $\pP$\label{d:P} has \emph{moves} defined as all integral multiples of a finite set $\M$\label{d:moveset} of non-zero, non-parallel integral vectors $m = (c,d) \in \bbZ^2$,\label{d:mr} 
which we call the \emph{basic moves}.  Each one must be reduced to lowest terms; that is, its two coordinates need to be relatively prime; and no basic move may be a scalar multiple of any other.  Thus, the slope of $m$ contains all  necessary information and can be specified instead of $m$ itself.  
We say two distinct pieces \emph{attack} each other if the difference of their locations is a move.  
In other words, if a piece is in position $z:=(x,y) \in \bbZ^2$, it attacks any other piece in the lines $z + r m$ for $r \in \bbZ$ and $m \in \M$.  For example, the set $\M$ is $\{(1,1),(1,-1)\}$ for the bishop.  
Attacks are not blocked by a piece in between (a superfluous distinction for nonattacking configurations), and they include the case where two pieces occupy the same location.
The number $q$ is the number of pieces that are to occupy places on the board; we assume $q > 0$.  

A \emph{configuration} $\bz=(z_1,\ldots,z_q)$\label{d:config} is any choice of locations for the $q$ pieces, including on the board's boundary, where $z_i:=(x_i,y_i)$ denotes the position of the $i$th piece $\pP_i$.  
(The boundary, while not part of the board proper, is necessary in our counting method in order to estimate periods.)  
That is, $\bz$ is an integral point in the $(n+1)$-fold dilation of the $2q$-dimen\-sion\-al closed, convex polytope $\cP=\cB^q$.\label{d:cP}.  
If we are considering the undilated board, $\bz$ is a fractional point in $\cB^q$.  We consider these two points of view equivalent; it will always be clear which kind of board or configuration we are dealing with.  
Any integral point $\bz$ in the dilated polytope, or its fractional representative $\frac{1}{n+1}\bz$ in the undilated board, represents a placement of pieces on the board, and vice versa; thus we use the same term ``configuration'' for the point and the placement.

The constraint for a \emph{nonattacking configuration} is that the pieces must be in the board proper (so $\bz\in(\cBo)^q$ or its dilation) and that no two pieces may attack each other.  In other words, if there are pieces at positions $z_i$ and $z_j$, then $z_j-z_i$ is not a multiple of any $m\in\M$; equivalently, $(z_j-z_i)\cdot m^\perp \neq 0$ for each $m\in\M$, where $m^\perp := (d,-c)$\label{d:mrperp}.  

For counting we treat nonattacking configurations as ``interior'' integral lattice points in the dilation of an inside-out polytope $(\cP,\cA_\pP)$, where $\cP=\cB^q$ and $\cA_\pP$ is the \emph{move arrangement}\label{d:AP}, whose members are the \emph{move hyperplanes} (or \emph{attack hyperplanes}) 
$$
\cH^{d/c}_{ij} := \{ (z_1,\ldots,z_q) \in \bbR^{2q} : (z_j - z_i) \cdot m^\perp = 0 \}
\label{d:slope-hyp}
$$ 
for $m=(c,d)\in\M$; the equations of these hyperplanes are called the \emph{move equations} (or \emph{attack equations}) of $\pP$.  
We supplement this notation with 
$$\cX_{ij}:=\cH_{ij}^{1/0}: x_i=x_j \ \text{ and } \ \cY_{ij}:=\cH_{ij}^{0/1}: y_i=y_j\label{d:XY}$$ 
for the hyperplanes that express an attack along a row or column.  
Thus, by the definition of the interior of an inside-out polytope \cite{IOP}, a configuration $\bz\in\cP$ is nonattacking if and only if it is in $\cP^\circ$ and not in any of the hyperplanes $\cH^{d/c}_{ij}$.  
A \emph{vertex} of $(\cP,\cA_\pP)$ is any point in $\cP$ that is the intersection of facets of $\cP$ and hyperplanes of $\cA_\pP$.  For instance, it may be a vertex of $\cP$, or it may be the intersection point of hyperplanes if that point is in $\cP$, or it may be the intersection of some facets and some hyperplanes.

The $2q$ equations that determine a vertex $\bz$ of $(\cB^q,\cA_\pP)$ are {\em move equations}, associated to hyperplanes $\cH^{d/c}_{ij} \in \cA_\pP$, and \emph{boundary equations} or {\em fixations}, of the form $z_i \in$ an edge line $\cE$\label{d:cE} of $\cB$.  
(For the square board a fixation is one of $x_i=0$, $y_i=0$, $x_i=1$, and $y_i=1$.  In a dilation $N\cdot\cB$, a fixation has the form $z_i \in N\cdot\cE$; on the square board, $x_i=0$, $y_i=0$, $x_i=N$, or $y_i=N$.)  
The vertex $\bz$ represents a configuration with $\pP_i$ on the boundary of the board (if it has a fixation) or attacking one or more other pieces (if in a hyperplane).  We call the configuration of pieces that corresponds to a vertex $\bz$ a \emph{vertex configuration}.

%=======
\subsection{Periods and denominators}\label{sec:period}\

Let us write $\Delta(\bz)$\label{d:Deltabz} for the least common denominator of a fractional point $\bz \in \bbR^{2q}$ and call it the \emph{denominator of\/ $\bz$}.  
The denominator $D=D(\cB^q,\cA_\pP)$ of the inside-out polytope is then the least common multiple of the denominators $\Delta(\bz)$ of the individual vertices.  

One way to find $\Delta(\bz)$ for a vertex $\bz$ is to find its coordinates by intersecting move hyperplanes of $\pP$ and facet hyperplanes of $\cB^q$.
There is an equivalent method to find $\Delta(\bz)$.  For a set of move equations and fixations producing a vertex configuration $\bz$, notice that the $\Delta(\bz)$-multiple of $\bz$ has integer coordinates and no smaller multiple of $\bz$ does.  This proves:

\begin{lem}\label{L:DeltaN}
For a vertex $\bz$ of $(\cB^q, \cA_\pP)$, $\Delta(\bz)$ equals the smallest integer $N$ such that a configuration $N\cdot\bz$ satisfying the move equations and fixations $z_i\in N\cdot\cE$ for edge lines $\cE$ has integral coordinates.  
\end{lem}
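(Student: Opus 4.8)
The plan is to show that the set of positive integers $N$ described in the statement is in fact cut out by the single requirement that $N\bz$ have integral coordinates, after which the ``smallest such $N$'' is the least common denominator $\Delta(\bz)$ by definition. The only content beyond that bookkeeping is that the equations defining the vertex scale covariantly under dilation.

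First I would invoke the definition of a vertex of $(\cB^q,\cA_\pP)$: since $\bz$ is such a vertex, there is a system $S$ of $2q$ linearly independent equations—some of them move equations, coming from hyperplanes $\cH^{d/c}_{ij}\in\cA_\pP$, and the rest fixations $z_i\in\cE$ for edge lines $\cE$ of $\cB$—whose unique common solution is $\bz$. Which particular defining system one picks when $\bz$ is overdetermined is immaterial; I would fix one such $S$ once and for all.

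Next I would check that each equation of $S$ behaves well under dilation by a positive integer $N$. A move equation $(z_j-z_i)\cdot m^\perp=0$ is homogeneous with integer coefficients, so $\bz$ satisfies it if and only if $N\bz$ does. An edge line $\cE$ of the rational polygon $\cB$ has an equation $ax+by=c$ with $a,b,c\in\bbZ$, and its $N$-dilation $N\cE$ has equation $ax+by=Nc$; hence $z_i$ lies on $\cE$ exactly when $Nz_i$ lies on $N\cE$ (for the square board this is just the passage from $x_i=0,\,y_i=0,\,x_i=1,\,y_i=1$ to $x_i=0,\,y_i=0,\,x_i=N,\,y_i=N$). Consequently, for every positive integer $N$ the dilated system—the same move equations together with the fixations $z_i\in N\cE$—still has rank $2q$, and its unique solution is $N\bz$.

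Finally I would assemble the pieces: a configuration satisfies the move equations and the dilated fixations $z_i\in N\cE$ precisely when it equals $N\bz$, so such a configuration has integral coordinates if and only if $N\bz\in\bbZ^{2q}$; and the smallest positive integer $N$ with $N\bz\in\bbZ^{2q}$ is, by definition, $\Delta(\bz)$. I do not expect a genuine obstacle here—the only step needing care is the scaling observation (so that ``$N\bz$ satisfies the move equations and fixations $z_i\in N\cE$'' is automatic for \emph{every} integer $N$) together with the appeal to uniqueness of the vertex, which guarantees that $N\bz$ is the \emph{only} solution of the dilated system rather than merely one among several.
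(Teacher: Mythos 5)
Your proposal is correct and follows essentially the same route as the paper, whose entire proof is the one-line observation that the $\Delta(\bz)$-multiple of $\bz$ is integral and no smaller multiple is; you simply make explicit the scaling covariance of the move equations and fixations and the uniqueness of the solution of the dilated system, which the paper leaves implicit.
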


We expect the period to be weakly increasing with $q$ and also with the set of moves; that is, if $q'>q$, the period for $q'$ pieces should be a multiple of that for $q$; and if $\pP'$ has move set containing that of $\pP$, then the period of $\pP'$ should be a multiple of that of $\pP$.  We cannot prove either property, but they are obvious for denominators, and we see in examples that the period equals the denominator.  Write $D_q(\pP)$\label{d:DqP} for the denominator of the inside-out polytope $(\cube,\cA_\pP^q)$.  (The optional superscript in $\cA_\pP^q$ is for when the number of pieces varies.)

\begin{prop}\label{P:Dincrease}
Let $\cB$ be any board, let $q'>q>0$, and suppose $\pP$ and $\pP'$ are pieces such that every basic move of $\pP$ is also a basic move of $\pP'$.  Then the denominators satisfy $D_q(\pP) | D_{q'}(\pP)$ and $D_q(\pP) | D_q(\pP')$.
\end{prop}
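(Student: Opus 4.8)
The plan is to prove each divisibility separately by exhibiting, for every vertex of the smaller inside-out polytope, a vertex of the larger one with the same (or a multiple of the) denominator. Since $D_q(\pP)$ is by definition the least common multiple of the denominators $\Delta(\bz)$ over all vertices $\bz$ of $(\cB^q,\cA_\pP)$, it suffices to show that for each vertex $\bz$ of the smaller polytope there is a vertex $\bz'$ of the larger polytope with $\Delta(\bz)\mid\Delta(\bz')$; then every $\Delta(\bz)$ divides $D$ of the larger polytope, hence so does their lcm.

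For the claim $D_q(\pP)\mid D_{q'}(\pP)$, I would take a vertex $\bz=(z_1,\dots,z_q)$ of $(\cB^q,\cA_\pP)$, determined by $2q$ independent move equations and fixations, and extend it to a configuration of $q'$ pieces by placing the extra pieces $z_{q+1},\dots,z_{q'}$ at a corner of $\cB$ (or any rational vertex of $\cB$ itself). Each added piece contributes two fixations $z_i\in\cE,\,z_i\in\cE'$ (for the square board, $x_i=0$ and $y_i=0$, say), which together with the original $2q$ equations give $2q'$ independent defining equations; the resulting point lies in $\cB^{q'}$ and is a vertex of $(\cB^{q'},\cA_\pP)$. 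Its denominator is $\operatorname{lcm}(\Delta(\bz),\Delta(z_{q+1}),\dots)$, and since $\cB$ is rational with bounded-denominator corners — in fact for a corner the added coordinates are those of a fixed corner of $\cB$ — we get $\Delta(\bz)\mid\Delta(\bz')$. Using Lemma~\ref{L:DeltaN} one can phrase this cleanly: the minimal $N$ clearing the original equations still clears the enlarged system once the corner is chosen with denominator dividing that minimal $N$; for the unit square the corner is integral so no new constraint appears at all.

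For the claim $D_q(\pP)\mid D_q(\pP')$, note that $\cA_\pP\subseteq\cA_{\pP'}$ since every basic move of $\pP$ is a basic move of $\pP'$, so every move hyperplane of $\pP$ is a move hyperplane of $\pP'$. Hence any vertex $\bz$ of $(\cB^q,\cA_\pP)$, being an intersection of facets of $\cB^q$ and hyperplanes in $\cA_\pP$, is automatically an intersection of facets of $\cB^q$ and hyperplanes in $\cA_{\pP'}$, so it is a vertex of $(\cB^q,\cA_{\pP'})$ with the identical coordinates and hence the identical denominator $\Delta(\bz)$. Therefore every $\Delta(\bz)$ for $\pP$ divides $D_q(\pP')$, and taking the lcm gives $D_q(\pP)\mid D_q(\pP')$.

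The only delicate point is the first part: one must verify that adjoining the two fixations per extra piece really produces an independent system (so the extended point is genuinely a vertex, not just a point of $\cB^{q'}$), and that the extra coordinates do not increase the denominator beyond a multiple of $\Delta(\bz)$. Independence is immediate because the new equations involve only the new coordinates $x_i,y_i$ for $i>q$, which appear in none of the original $2q$ equations, so the block structure of the coefficient matrix makes it nonsingular. The denominator bookkeeping is routine once the placement is chosen as a corner of $\cB$ (whose denominator is a fixed constant dividing any common multiple, and is $1$ for the unit square). I expect this independence-and-denominator check to be the main — though still light — obstacle; the second divisibility is essentially definitional.
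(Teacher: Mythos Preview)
Your proposal is correct and follows essentially the same approach as the paper. The paper's proof is a two-line sketch: for the first divisibility it embeds $\mathbb{R}^{2q}$ into $\mathbb{R}^{2q'}$ so that $\cB^q$ becomes a face of $\cB^{q'}$ (which is exactly your ``park the extra pieces at a corner'' construction), and for the second it simply notes $\cA_\pP^q \subseteq \cA_{\pP'}^q$. Your version spells out the independence and denominator bookkeeping that the paper leaves implicit, but the underlying argument is identical.
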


\begin{proof}
The first part is clear if we embed $\bbR^{2q}$ into $\bbR^{2q'}$ as the subspace of the first $2q$ coordinates, so the polytope $\cB^q$ is a face of $\cB^{2q'}$ and the move arrangement $\cA_\pP^q$ in $\bbR^{2q}$ is a subarrangement of the arrangement induced in $\bbR^{2q}$ by $\cA_\pP^{q'}$.

The second part is obvious since $\cA_\pP^q \subseteq \cA_{\pP'}^{q}$.
\end{proof}

A \emph{partial queen} $\pQ^{hk}$ \label{d:partQ} is a piece with $h$ basic moves that are horizontal or vertical (obviously, $h\leq2$) and $k$ basic moves at $\pm45^\circ$ to the horizontal (also, $k\leq2$). Four of them are the semirook $\pQ^{10}$, rook $\pQ^{20}$, semibishop $\pQ^{01}$, anassa $\pQ^{11}$,\footnote{``Anassa'' is archaic Greek feminine for a tribal chief, i.e., presumably for the consort of a chief \cite{Anax}.} and semiqueen $\pQ^{21}$.  Anticipating later results, we have the striking conclusion that:

\begin{thm}\label{prop:denom1}
On the square board there are only four pieces whose denominator is $1$ for all $q\geq 1$.  They are the rook, semirook, semibishop, and anassa. Their counting functions are polynomials in $n$.
\end{thm}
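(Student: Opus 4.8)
The statement has two halves: (a) the four pieces $\pQ^{20}, \pQ^{10}, \pQ^{01}, \pQ^{11}$ have $D_q(\pP)=1$ for all $q$ (so their counting functions are honest polynomials in $n$ by Ehrhart theory), and (b) no other piece has denominator $1$ for all $q$. For (a), the plan is to show directly that every vertex of $(\cube,\cA_\pP^q)$ is an integral point, i.e.\ $\Delta(\bz)=1$, using Lemma~\ref{L:DeltaN}: a vertex is cut out by $2q$ equations, each either a fixation $x_i\in\{0,1\}$ or $y_i\in\{0,1\}$, or a move equation $x_i=x_j$, $y_i=y_j$, $x_i-x_j=\pm(y_i-y_j)$. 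The key observation is that for these four pieces \emph{every} such equation, in every coordinate, has a coefficient that is $\pm1$ on each variable it involves; so the $2q\times 2q$ coefficient matrix of the system has entries in $\{0,\pm1\}$ and, more importantly, the incidence structure (fixations pin a coordinate; $x_i=x_j$ type equations couple two $x$'s or two $y$'s; the $\pm45^\circ$ equation couples $x_i-x_j$ with $y_i-y_j$) forces any solution to be integral. I would make this precise by a propagation argument: order the equations so that each newly-used equation either fixes a coordinate to $0$ or $1$, or sets one unknown equal to $\pm1$ times an already-integral quantity plus an integer; since the right-hand sides are all integers, solving top to bottom keeps everything in $\bbZ$. (Equivalently: the relevant coefficient matrices are unimodular because the moves $(1,0),(0,1),(1,1),(1,-1)$ generate a lattice in which $m^\perp$ is again primitive with unit-ish structure, but the propagation argument is cleaner and self-contained.) Integrality of all vertices gives $D=1$, and then the inside-out Ehrhart theorem quoted in Section~\ref{partI} makes $u_\pP(q;n)$ a polynomial in $n$.

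**The converse.** For (b) I would argue that \emph{any} piece not on the list has a basic move $m=(c,d)$ with $|c|\ge 2$ or $|d|\ge 2$ (every primitive vector other than $(1,0),(0,1),(1,\pm1)$ has this property, up to sign), and then exhibit, for some $q$, a vertex with denominator $>1$. The natural construction is to take just two pieces and use a single move hyperplane $\cH^{d/c}_{12}$ together with three fixations: put $z_1$ at a corner, say $x_1=0, y_1=0$, fix $x_2=1$, and impose $(z_2-z_1)\cdot m^\perp=0$, i.e.\ $d\cdot x_2 = c\cdot y_2$, giving $y_2=d/c$. If $|c|\ge 2$ this already lies outside $\bbZ$ (after checking $0\le d/c\le 1$, which one arranges by choosing which corner and which coordinate to fix, possibly scaling the picture up to more pieces so the relevant move fits inside the unit square — this is where a little care is needed), so $\Delta(\bz)\ge|c|/\gcd(|c|,|d|)>1$. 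The symmetric construction handles $|d|\ge 2$. By Proposition~\ref{P:Dincrease} a denominator $>1$ for some $q$ persists for all larger $q$, so such a piece fails to have denominator $1$ for all $q$. Combining with the fact (to be cited or shown in the same breath) that a piece whose moves are all drawn from $\{(1,0),(0,1),(1,1),(1,-1)\}$ is exactly a partial queen $\pQ^{hk}$, and that among partial queens only $\pQ^{10},\pQ^{20},\pQ^{01},\pQ^{11}$ avoid containing \emph{both} a diagonal move $(1,1)$ and its mate $(1,-1)$ — wait, that is the wrong dichotomy; rather, one checks by hand that $\pQ^{02}, \pQ^{12}, \pQ^{21}, \pQ^{22}$ (the queen) all produce a non-integral vertex already at $q=2$ or $q=3$ by intersecting a $(1,1)$-hyperplane, a $(1,-1)$-hyperplane, and fixations — completes the list.

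**Main obstacle.** The delicate point is the converse for pieces all of whose moves lie in $\{(1,1),(1,-1)\}$ together with at most one of $\{(1,0),(0,1)\}$ but \emph{both} diagonals present, i.e.\ the pieces $\pQ^{02},\pQ^{12},\pQ^{21}$ and the queen $\pQ^{22}$: here every single move equation is already unit-coefficient, so a single hyperplane plus fixations never breaks integrality, and one must instead intersect two parallel-free hyperplanes (one of each diagonal) with fixations to force a half-integral vertex — the classic $x_1=0,y_1=0$, $x_2-x_3=y_2-y_3$, $x_2-x_3=-(y_2-y_3)$ type configuration that pins $y_2=y_3$ and $x_2=x_3$ but then needs a third piece and a fixation like $x_1 + x_2 = 1$... this does not arise from our hyperplanes, so in fact one needs the finer analysis: it is a theorem (to be invoked from the partial-queen results the excerpt promises in Part~III / Section~\ref{sec:config1m}) that the semibishop $\pQ^{01}$ \emph{does} have denominator $1$ while the bishop $\pQ^{02}$ does \emph{not}. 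Pinning down exactly why one diagonal is harmless but two diagonals are not — the bishop's hyperplane arrangement has the $A_n$-type non-unimodularity hiding in it — is the real content, and I expect the proof to lean on an explicit small-$q$ vertex (e.g.\ a denominator-$2$ or denominator-$3$ vertex for the bishop at $q=3$) rather than a slick general argument. I would structure the writeup as: Lemma (propagation $\Rightarrow$ integrality for the four pieces); Lemma (explicit non-integral vertex for each generator of the ``bad'' move that is not already covered); then assemble via Proposition~\ref{P:Dincrease} and the case analysis on $\M$.
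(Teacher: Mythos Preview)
Your overall architecture---reduce to partial queens by exhibiting a denominator-$>1$ vertex for any piece with a move $(c,d)$ having $\max(|c|,|d|)\ge 2$, then handle the remaining partial queens case by case---matches the paper's logic, but the paper executes it by citing heavier results proved elsewhere rather than by ad hoc constructions.  Specifically, the paper uses Theorem~\ref{T:maxcd} (the one-move period formula) together with Proposition~\ref{P:Dincrease} to force the move set into $\{(1,0),(0,1),(1,1),(1,-1)\}$; then Theorem~\ref{T:exponential} (the golden-parallelogram exponential lower bound) to eliminate every partial queen with three or more moves at once; then a citation to Part~VI for the bishop; and finally Proposition~\ref{prop:anassa} (a corollary of the two-move trajectory formula, Proposition~\ref{P:denom2move}) for the anassa.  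Your propagation/unimodularity sketch for the ``good'' four pieces is a reasonable alternative to those citations, though it would need tightening (you have not actually argued that the equations can always be ordered triangularly; for the anassa the diagonal move couples $x$'s and $y$'s and the claim is not self-evident).

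There is, however, a real gap in your converse.  You list $\pQ^{21}$ among the pieces with ``both diagonals present'' and propose to kill it by intersecting a $(1,1)$-hyperplane with a $(1,-1)$-hyperplane.  But $\pQ^{21}$ has \emph{one} diagonal move and two axis moves; it does not contain the bishop's move set, so neither your two-diagonal construction nor monotonicity from the bishop applies.  Worse, one can check that every vertex for $q\le 3$ semiqueens is integral (the triangle configuration of Proposition~\ref{P:triangular} has $w=(1,-1,1)$ and denominator $1$), so no ``small-$q$ explicit vertex'' of the sort you anticipate exists.  The first non-integral vertex is the $q=4$ golden rectangle $(1,0),(0,1),(1,1),(2,0)$ with $\Delta=2$, and producing it (or anything like it) is exactly the content of the golden-parallelogram construction behind Theorem~\ref{T:exponential}.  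Without that construction or an equivalent, your plan does not exclude the semiqueen.

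For the bishop itself your instinct is right: at $q=3$ the configuration $z_1=(0,0)$, $z_2=(\tfrac12,\tfrac12)$, $z_3=(1,0)$, with attacks $\cH^{1/1}_{12}$, $\cH^{-1/1}_{23}$ and corner fixations on $z_1,z_3$, is a vertex of denominator $2$; this then covers $\pQ^{12}$ and $\pQ^{22}$ by Proposition~\ref{P:Dincrease}.  So the missing ingredient in your plan is precisely the three-move case, and that is where the paper invokes the Fibonacci machinery.
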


\begin{proof}
By Theorem~\ref{T:maxcd} the only one-move pieces with denominator 1 are the semirook and semibishop, so by Proposition~\ref{P:Dincrease}  the only pieces that can have denominator 1 are  partial queens.  Theorem~\ref{T:exponential} reduces that to pieces with two or fewer moves.  The rook obviously has denominator 1, while the bishop has denominator and period 2 when $q\geq 3$ (Part VI). Proposition~\ref{prop:anassa} implies that the anassa has denominator 1. 
\end{proof}

Although in Ehrhart theory periods often are less than denominators, we observe that not to be true for our solved chess problems.  We believe that will some day become a theorem.

\begin{conj}[Conjecture II.8.6] \label{Cj:p=D}
For every rider piece $\pP$ and every number of pieces $q\geq1$, the period of the counting quasipolynomial $u_\pP(q;n)$ equals the denominator $D(\cube,\cA_\pP)$ of the inside-out polytope for $q$ pieces $\pP$.
\end{conj}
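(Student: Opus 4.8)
One direction is automatic: by inside-out Ehrhart theory the period of $u_\pP(q;n)$ divides the denominator $D_q(\pP)=D(\cube,\cA_\pP)$, so all the content is in ruling out that the period is a \emph{proper} divisor of $D_q(\pP)$. Since $D_q(\pP)$ is the least common multiple of the vertex denominators $\Delta(\bz)$, it is enough to prove, for every prime power $\ell^{e}$ that divides some $\Delta(\bz)$, that $\ell^{e}$ divides the period; and for that the natural move is to \emph{localize} the statement at the vertex $\bz$ responsible for $\ell^{e}$.

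The localization I would use rests on two facts from earlier parts. First, from Part~I, for fixed $q$ the coefficient $\gamma_i$ of $n^{2q-i}$ in $u_\pP(q;n)$ is a quasipolynomial in $n$ of some period $p_i$ independent of $q$, and the overall period equals $\lcm_{0\le i\le 2q}p_i$; so it suffices to show $\Delta(\bz)$ divides some $p_i$. Second, M\"obius inversion over the intersection semilattice $\cL(\cA_\pP)$ gives
\[
u_\pP(q;n)=\sum_{U\in\cL(\cA_\pP)}\mu(\hat 0,U)\,E^\circ_{\cP\cap U}(n+1),
\]
where $E^\circ_{\cP\cap U}$ is the open Ehrhart quasipolynomial of the polytope $\cP\cap U$, whose period divides the least common multiple of the denominators of \emph{its} vertices. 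If $U=U(\bz)$ is the flat of $\cA_\pP$ through $\bz$, then (generically) $\bz$ is a vertex of $\cP\cap U$, so $\Delta(\bz)$ divides the denominator of $\cP\cap U$ and hence enters the non-leading Ehrhart coefficients of $E^\circ_{\cP\cap U}$; it would therefore propagate into the coefficients $\gamma_i$ of $u_\pP(q;n)$ were it not for possible cancellation among the summands.

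Ruling out that cancellation is, I expect, the real obstacle: abstract Ehrhart theory gives no reason for period to equal denominator --- ordinarily it does not --- so one must use the geometry of the move arrangement, in particular that the hypercube $\cube$ is full-dimensional with coordinate facet normals, which rigidly constrains how a vertex cone can interact with the integral slopes of the move hyperplanes. Two lines of attack seem plausible. One is a non-degeneracy argument at the vertex cone of $\cP\cap U(\bz)$ at $\bz$: control the relevant non-leading Ehrhart coefficient by a solid-angle or Macdonald-reciprocity computation and show that the $\mu$-weighted total is genuinely $\ell^{e}$-periodic in $n$. This can surely be carried out in the cases already understood --- one-move pieces (Proposition~\ref{p:1moveIV}), two-move pieces with a horizontal move (Proposition~\ref{P:denom2move}), and the anassa (Proposition~\ref{prop:anassa}) --- and Proposition~\ref{P:Dincrease} would let one push the equality upward as $q$ and the move set grow, provided one also tracks the newly created vertices, above all the Fibonacci-spiral vertices of Theorem~\ref{T:exponential}. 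The other is to prove directly, by Ehrhart reciprocity, that the constituents of $u_\pP(q;n)$ cannot agree across two residue classes of $n$ differing modulo a proper divisor of $D_q(\pP)$. Until one of these non-cancellation inputs is supplied the statement remains a conjecture; what is offered here is a roadmap, not a proof.
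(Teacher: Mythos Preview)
The statement you were asked to prove is a \emph{conjecture} in the paper, not a theorem; the paper offers no proof of it and explicitly says ``We believe that will some day become a theorem.'' So there is no paper proof to compare against, and your own conclusion---that what you have is ``a roadmap, not a proof''---is exactly right and matches the paper's stance.

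A few remarks on the roadmap itself. Your identification of the M\"obius-sum cancellation as the essential obstruction is on target; the paper gives no mechanism for controlling it either. However, your suggestion that Proposition~\ref{P:Dincrease} ``would let one push the equality upward as $q$ and the move set grow'' overstates what is available: that proposition is purely about denominators, and the paper explicitly notes (just before the proposition) that the analogous monotonicity for \emph{periods} is unproved. So the inductive-propagation idea currently has no foundation on the period side. Also, the claim that $\bz$ is ``(generically)'' a vertex of $\cP\cap U(\bz)$ hides a real issue: the flat $U(\bz)$ through a vertex of the inside-out polytope need not meet $\cP$ in a polytope having $\bz$ as a vertex, since $\bz$ may require facet fixations in addition to move hyperplanes. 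Finally, the special cases you cite as ``surely'' doable are indeed verified in the paper (Theorem~\ref{T:maxcd} for one-move riders, Proposition~\ref{prop:anassa} for the anassa), but by direct period computations rather than by any non-cancellation argument of the type you sketch; they do not obviously serve as a template for the general case.
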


If Conjecture~\ref{Cj:p=D} is true, the four special partial queens of Theorem~\ref{prop:denom1} will be the only pieces with period 1.

\begin{conj}\label{Cj:period1}
The rook, semirook, semibishop, and anassa are the only four pieces whose counting functions on the square board are polynomials in $n$. 
\end{conj}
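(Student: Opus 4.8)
We outline a possible route to Conjecture~\ref{Cj:period1}. The forward implication is exactly Theorem~\ref{prop:denom1}. For the converse one must show that every rider $\pP$ other than the rook, semirook, semibishop, and anassa has $u_\pP(q;n)$ nonpolynomial --- equivalently, period $p>1$ --- for at least one value of $q$. By Theorem~\ref{prop:denom1} together with Proposition~\ref{P:Dincrease}, such a $\pP$ has $D_q(\pP)>1$ for all large $q$, so under Conjecture~\ref{Cj:p=D} the conclusion is immediate; the point here is to obtain it without presupposing $p=D$. The plan is to treat separately the riders that are not partial queens, handled uniformly at $q=2$, and the partial queens, which reduce to a short explicit list.

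\emph{Riders that are not partial queens.} Suppose $\pP$ has a basic move $m=(c,d)$ with $\max(|c|,|d|)\ge2$, and take $q=2$. Two distinct cells of the board differ by a multiple of at most one basic move, so a placement of two unlabelled pieces is attacking exactly when the two cells lie on one move line; hence
\[
u_\pP(2;n)=\binom{n^2}{2}-\tfrac12\sum_{m\in\M}B_m(n),
\]
where $B_m(n)$ counts the ordered pairs of distinct cells of the $n\times n$ board whose difference is an integer multiple of $m$. For $m=(c,d)$ one has $B_m(n)=2\sum_{r=1}^{\lfloor(n-1)/\max(|c|,|d|)\rfloor}(n-r|c|)(n-r|d|)$; the summand is a polynomial in $n$ and $r$, so the only source of $n$-quasiperiodicity is the upper limit, and $B_m$ is a quasipolynomial whose minimal period divides $\max(|c|,|d|)$ and equals $1$ precisely when $m$ is orthogonal or diagonal. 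Since $\pP$ has at least one basic move with $\max(|c|,|d|)\ge2$, and since the quasiperiodic parts of the various $B_m$ cannot all cancel --- a short check, using the vanishing of $B_m(n)$ for $n\le\max(|c_m|,|d_m|)$ together with a comparison of a few values --- the function $u_\pP(2;n)$ is not a polynomial. This settles every rider that is not a partial queen.

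\emph{Partial queens.} Among the partial queens, the rook, semirook, semibishop, and anassa are polynomial by Theorem~\ref{prop:denom1}, and the bishop $\pQ^{02}$ has period $2$ for $q\ge3$ by Part~VI, so only the semiqueen $\pQ^{21}$, the piece $\pQ^{12}$, and the queen $\pQ^{22}$ remain. The natural tool here is Part~III's explicit formulas for the coefficients $\gamma_3$ and $\gamma_4$ of partial queens: one would verify that for each of these three pieces at least one of the periods $p_3,p_4$ exceeds $1$ --- plausibly $p_4=2$, reflecting the board-parity phenomenon that already makes the bishop quasiperiodic --- whence $u_\pP(q;n)$ has period $>1$ for all but finitely many $q$. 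Should $\gamma_3$ and $\gamma_4$ instead turn out to be polynomial for some of these pieces (as the need to treat the bishop separately in Part~VI may suggest is possible), the alternatives are to compute a higher coefficient $\gamma_i$, to adapt the Part~VI argument to $\pQ^{12}$ and $\pQ^{22}$ --- which, like the bishop, contain both diagonal moves --- or to exploit the trajectory structure of Hanusa and Mahankali for $\pQ^{21}$.

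\emph{The main obstacle.} The real difficulty is this last step: proving that $\pQ^{21}$, $\pQ^{12}$, and $\pQ^{22}$ have period exceeding $1$ \emph{without} invoking Conjecture~\ref{Cj:p=D}. It is entirely possible that every coefficient $\gamma_i$ computed so far ($i\le4$) is polynomial for one of these pieces even though its denominator grows without bound, in which case one needs either new coefficient computations or --- what would be most satisfying --- a structural reason why the quasiperiodicity forced by the vertices of Theorem~\ref{prop:denom1} must surface in the counting function. Indeed, it would already suffice to prove the implication ``$D_q(\pP)>1\ \Rightarrow\ p>1$'' for the inside-out polytopes $(\cube,\cA_\pP)$, a weakening of Conjecture~\ref{Cj:p=D} that, combined with Theorem~\ref{prop:denom1}, yields Conjecture~\ref{Cj:period1}. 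The non-cancellation point in the $q=2$ argument is routine by comparison but is not quite automatic, and filling it in (via the vanishing of $B_m$ for small $n$, or via the prime-power structure of the periods $\max(|c_m|,|d_m|)$) is a necessary preliminary.
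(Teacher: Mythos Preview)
The statement is a \emph{conjecture}, and the paper does not prove it. The paper's entire justification is the sentence immediately preceding it: Conjecture~\ref{Cj:period1} follows from Conjecture~\ref{Cj:p=D} (period equals denominator) together with Theorem~\ref{prop:denom1} (which pins down exactly which pieces have denominator~$1$). No argument independent of Conjecture~\ref{Cj:p=D} is offered, and none is claimed.

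Your outline therefore goes well beyond the paper's treatment: you attempt to reach the conclusion \emph{without} assuming Conjecture~\ref{Cj:p=D}. The strategy---handle riders that are not partial queens uniformly via an explicit $q=2$ computation, then dispose of the finitely many remaining partial queens using Part~VI for the bishop and the Part~III coefficient formulas for $\pQ^{21}$, $\pQ^{12}$, $\pQ^{22}$---is a sensible decomposition and is not something the authors pursued. You are also candid that the plan is incomplete, and you locate the two soft spots accurately.

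Two remarks on those soft spots. First, your $q=2$ formula and the observation that $B_m$ is a genuine polynomial exactly when $m$ is horizontal, vertical, or diagonal are correct; but the non-cancellation claim is not as automatic as ``a short check'' suggests. The quasiperiodic parts of the $B_m$ are signed functions of the residue of $n$, and although all $B_m$ enter with the same sign, that alone does not preclude their periodic parts summing to zero. Your suggested route via the vanishing of $B_m(n)$ for $n\le\max(|c_m|,|d_m|)$ is promising but needs to be carried out. Second, for $\pQ^{21}$, $\pQ^{12}$, and $\pQ^{22}$ you have correctly isolated the real obstruction: one needs either some coefficient $\gamma_i$ with period exceeding~$1$, or the weak implication ``$D_q(\pP)>1\Rightarrow p>1$'' for these inside-out polytopes. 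The paper supplies neither---indeed, that implication is a special case of the very Conjecture~\ref{Cj:p=D} the authors leave open---which is exactly why the statement is recorded as a conjecture rather than a theorem.
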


%%%%%%%%%%%%%%%%%%%%%%%%%%%%
\sectionpage
\section{One-move riders}\label{sec:config1m}

The denominator of the inside-out polytope of a one-move rider on  an arbitrary board $\cB$ can be explicitly determined.

Given a move $m=(c,d)$, the line parallel to $m$ through a corner $z$ of $\cB$ may pass through another point on the boundary of $\cB$.  Call that point the {\em antipode} of $z$.  The antipode may be another corner of $\cB$.  When $m$ is parallel to an edge $z_iz_j$ of $\cB$, we consider $z_i$ and $z_j$ to be each other's antipodes.

\begin{prop}\label{p:1moveIV}
For a one-move rider $\pP$ with move $(c,d)$, the denominator of the inside-out polytope $(\cB^q,\cA_{\pP})$ equals the least common denominator of the corners of $\cB$ when $q=1$, and when $q\geq 2$ it equals the least common denominator of the corners of $\cB$ and their antipodes.
\end{prop}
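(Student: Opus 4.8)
The approach is to describe the vertices of $(\cB^q,\cA_{\pP})$ explicitly from the one-move structure and to show that they contribute exactly the corners and antipodes once $q\ge 2$. When $q=1$ there are no pairs of pieces, so $\cA_{\pP}$ is empty, the vertices of $(\cB,\cA_{\pP})$ are precisely the corners of $\cB$, and $D$ is their least common denominator; so assume $q\ge 2$. Given a vertex $\bz$, let $G$ be the graph on $\{1,\dots,q\}$ with an edge $ij$ exactly when $\bz\in\cH^{d/c}_{ij}$, i.e.\ when $(z_j-z_i)\cdot m^\perp=0$ for $m=(c,d)$. Since $m$ is a single fixed vector, this condition is transitive, so within each connected component of $G$ all the pieces lie on one line parallel to $m$. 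Hence the common intersection of the move hyperplanes through $\bz$ factors as a product over the components of $G$, and $\bz$ is a vertex of $(\cB^q,\cA_{\pP})$ if and only if, for each component $C$ with piece set of size $s$, the restricted configuration is a vertex of the polytope $Q_C:=\cB^{\,s}\cap\{\text{all }s\text{ pieces on a common line parallel to }m\}$, which has dimension $s+1$.

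For the upper bound I would work inside a single component. A vertex of the $(s+1)$-dimensional polytope $Q_C$ is cut out by $s+1$ independent fixations $z_i\in\cE$ (with $\cE$ an edge line of $\cB$), distributed over only $s$ pieces; so some piece carries two of them, and because two distinct edge lines of a convex polygon meet within $\cB$ only at a corner, that piece sits at a corner $v$ of $\cB$. The common line of $C$ is then the line $\ell$ through $v$ parallel to $m$, and every other piece of $C$ lies on the chord $\ell\cap\cB$, whose endpoints are $v$ and its antipode $\bar v$; being pinned by a further fixation, each such piece lies at $v$ or at $\bar v$. Thus every coordinate of every vertex of $(\cB^q,\cA_{\pP})$ is a coordinate of some corner or of some antipode of $\cB$, and since the denominator of a tuple is the least common multiple of the denominators of its coordinates, $\Delta(\bz)$, hence $D$, divides the least common denominator of the corners of $\cB$ together with their antipodes.

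For the reverse divisibility I would exhibit witnessing vertices. Placing all $q$ pieces at a corner $v$ is already a vertex, so every corner's denominator divides $D$. Now fix a corner $v$ whose antipode $\bar v$ lies in the relative interior of an edge $\cE_0$ (if $\bar v$ is itself a corner there is nothing more to prove). Take the configuration with $z_1=v$ fixed by the two edges of $\cB$ at $v$, with the move equation of $\cH^{d/c}_{12}$, with $z_2\in\cE_0$, and with $z_3,\dots,z_q$ all at $v$. Then $z_1=v$ forces $z_2$ onto $\ell$, the line $\ell$ is not the line of $\cE_0$ (otherwise $v$ would be an endpoint of $\cE_0$, against our hypothesis and the convention on antipodes along an edge parallel to $m$), and so $z_2\in\ell\cap\cE_0=\{\bar v\}$; the resulting $2q$ equations cut out this single point, which is therefore a vertex whose denominator is divisible by $\Delta(\bar v)$. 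Combining the two divisibilities yields the stated equality, uniformly for all $q\ge 2$, consistent with Proposition~\ref{P:Dincrease}.

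The linear algebra and dimension counts above are routine; the genuine work lies in the degenerate incidences the language glosses over: when the line through a corner parallel to $m$ runs along an entire edge of $\cB$ (so that a fixation on a collinear piece no longer pins that piece), when such a line passes through more than one corner, and when an antipode coincides with a corner. Each of these is exactly the situation the definition of antipode---including the convention that the two endpoints of an edge parallel to $m$ are mutually antipodal---is designed to absorb, so the proof should invoke that definition at each such juncture. Managing this case bookkeeping, rather than any new idea, is the main obstacle.
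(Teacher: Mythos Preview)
Your proposal is correct and follows essentially the same argument as the paper: partition the pieces into blocks (your graph components, the paper's partition $\pi$) according to which move hyperplanes contain the vertex, count dimensions to see that each block of size $s$ needs $s+1$ fixations, apply pigeonhole to pin one piece at a corner and the rest at that corner or its antipode, and then exhibit a two-piece witness for the reverse divisibility. Your write-up is a bit more explicit about the witnessing construction and the degenerate edge-parallel cases, but the route is the same.
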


\begin{proof}
A vertex of $(\cB^q,\cA_{\pP})$ is generated by some set of hyperplanes, possibly empty, and a set of fixations. The total number of hyperplanes and fixations required is $2q$. When $q=1$, because there are no move equations involved, a vertex of the inside-out polytope is a corner of $\cB$.   

When $q\geq 2$, a vertex is determined by its fixations and the intersection $\cI$ of the move hyperplanes it lies in.  Let $\pi$ be the partition of $[q]$ into blocks for which $i$ and $j$ are in the same block if $\cH^{d/c}_{ij}$ is one of the hyperplanes containing $\cI$. The number of hyperplanes necessary to determine $\cI$ is $q$ minus the number of blocks of $\pi$.  ($\cI$ will be contained in additional, unnecessary hyperplanes if a block of $\pi$ has three or more members; we do not count those.)  

Consider a particular block of $\pi$, which we may suppose to be $[k]$ for some $k\geq1$.  We need $k+1$ fixations on the $k$ pieces to specify a vertex, so there must be two fixations that apply to the same $i\in[k]$, anchoring $z_i$ to a corner of $\cB$.  The remaining $k-1$ fixations fix the other values $z_j$ for $j\in[k]$ to either $z_i$'s corner or its antipode.

It follows that all vertices $(z_1,\hdots,z_q)$ of the inside-out polytope satisfy that each $z_i$ is either a corner or a corner's antipode for all $i$.  Furthermore, with at least two pieces and for every corner $z$, it is possible to create a vertex containing $z$ and its antipode as components, from which the proposition follows.
\end{proof}

For the square board, the corners are $(0,0)$, $(1,0)$, $(0,1)$, and $(1,1)$, and the antipodes have denominator $\max(|c|,|d|)$. Proposition~II.6.2 tells us that the coefficient of $n^{2q-3}$ has period $\max(|c|,|d|)$ when $q\geq 2$.  Thus:

\begin{thm}\label{T:maxcd}
On the square board with $q\geq2$ copies of a one-move rider with basic move $(c,d)$, the period of $u_\pP(q;n)$ is $\max(|c|,|d|)$.
\end{thm}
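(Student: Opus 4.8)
\emph{Plan of proof.} The strategy is a pincer. On the one hand the inside-out Ehrhart theory recalled in Section~\ref{intro} guarantees that the period $p$ of $u_\pP(q;n)$ divides the denominator $D_q(\pP)$ of the inside-out polytope; on the other hand Proposition~II.6.2 forces $p$ to be a multiple of $\max(|c|,|d|)$. So it suffices to show $D_q(\pP)=\max(|c|,|d|)$ and then sandwich.

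For the upper bound I would apply Proposition~\ref{p:1moveIV}, which (since $q\geq2$) identifies $D_q(\pP)$ with the least common denominator of the corners of the square board together with their antipodes under the move $(c,d)$. The corners are integral, so only the antipodes matter. Reflecting the square across its coordinate axes---a symmetry that alters neither the counting function nor $\max(|c|,|d|)$---I may assume $c,d\geq0$; if $c=0$ or $d=0$ the move is vertical or horizontal and every antipode is a corner, so all denominators equal $1=\max(|c|,|d|)$. When $c,d\geq1$, the line through $(0,0)$ in direction $(c,d)$ leaves $[0,1]^2$ at $(1,d/c)$ if $c\geq d$ and at $(c/d,1)$ if $d\geq c$; since the basic move is in lowest terms, $\gcd(c,d)=1$ and this point has denominator exactly $\max(c,d)$, while one checks that no antipode of any corner has a larger denominator (its single non-integral coordinate is a corner coordinate shifted by $\pm d/c$ or $\pm c/d$). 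Hence the least common denominator over all corners and antipodes is $\max(|c|,|d|)$, so $D_q(\pP)=\max(|c|,|d|)$ and therefore $p\mid\max(|c|,|d|)$.

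For the lower bound, observe that $q\geq2$ gives $2q-3\geq1$, so $u_\pP(q;n)$ genuinely has a coefficient of $n^{2q-3}$; by Proposition~II.6.2 that coefficient, regarded as a function of $n$, has period exactly $\max(|c|,|d|)$. But every constituent of the quasipolynomial $u_\pP(q;n)$ is the polynomial attached to one residue class of $n$ modulo $p$, so comparing coefficients of like powers of $n$ shows that the coefficient of each power of $n$ is a function of $n$ with period dividing $p$. Applied to $n^{2q-3}$ this gives $\max(|c|,|d|)\mid p$, and together with the upper bound we conclude $p=\max(|c|,|d|)$.

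Granting Propositions~\ref{p:1moveIV} and~II.6.2, there is no real obstacle; the only points requiring care are (i) verifying that the antipode coordinates reduce to denominator exactly $\max(|c|,|d|)$ rather than something smaller---this is where $\gcd(c,d)=1$ enters---(ii) handling the degenerate horizontal and vertical moves, where one must separately note that $u_\pP(q;n)$ is an actual polynomial in $n$, and (iii) checking that any $n$-independent normalization in Proposition~II.6.2 (such as a factor $1/q!$) is irrelevant to the period in $n$.
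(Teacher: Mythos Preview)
Your proposal is correct and follows essentially the same route as the paper: compute $D_q(\pP)=\max(|c|,|d|)$ from Proposition~\ref{p:1moveIV} by checking the corner antipodes on the square, and then pin down the period from below via Proposition~II.6.2 applied to the coefficient of $n^{2q-3}$. The paper's argument is simply a terse version of yours (it asserts the antipode denominators without the case analysis you supply), so there is no substantive difference in method.
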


This theorem was previously Conjecture II.6.1.  Hence Conjecture~\ref{Cj:p=D} is true for one-move riders: the period agrees with the denominator.

%=======
\sectionpage
\section{Two-move riders}\label{sec:config2m}

The denominator of the inside-out polytope for a two-move rider is also well understood.  Hanusa and Mahankali \cite{Arvind} extend the concept of antipode to define configurations of two-move riders called trajectories.  

An \emph{infinite trajectory} is an infinite sequence $[z_1,z_2,z_3,\hdots]$ of points on the boundary of $\cB$ such that the points $z_i$ and $z_{i+1}$ are related by an attack equation.  
We only need trajectories where the attack equations alternate between those of the moves $m_1$ and $m_2$.  
A \emph{trajectory} (or \emph{finite trajectory}) $T=[z_1,\hdots,z_l]$ is an initial sequence of an infinite trajectory, with \emph{length} $l\geq 1$.  It is subject to the following stopping rule:  it may stop at any step $l\geq1$, except that it is forced to stop if it hits an edge of $\cB$ that is parallel to a move vector or if it repeats the first point (then $z_l=z_1$).  
A trajectory that stops where it is forced to is \emph{maximal}; every trajectory is therefore an initial portion of a maximal trajectory.
A trajectory of length $1$ is \emph{trivial}.  
The \emph{extension} of $T$ is the trajectory $\widehat{T}=[z_1,\hdots,z_l,z_{l+1}]$ involving one more point from its infinite trajectory.

A \emph{corner trajectory} is a trajectory that includes a corner of $\cB$.  A \emph{rigid cycle} is a trajectory that does not contain a corner, that returns to its initial point, and whose system of attack equations and fixation equations is linearly independent.  A point in the interior where two extended trajectories cross or an extended trajectory crosses itself is called a {\em crossing point}. With these definitions, we state the main theorem of \cite{Arvind}.

%%%%
%%%%  Actual theorem number subject to change!  Verify before final publication.
%%%%

\begin{thm}[{\cite[Theorem~4.10]{Arvind}}]\label{T:Arvind}
The denominator of $(\cB^q,\cA_{\pP}^q)$ is equal to the least common multiple of
\begin{enumerate}
\item the denominators of points in the boundary of $\cB$ belonging to corner trajectories and rigid cycles of length at most $q$, and
\item the denominators of all crossing points $\mathbf{c}$ of corner trajectories and rigid cycles, where if $\mathbf{c}$ is a self-crossing, the length of its defining trajectory is at most $q-1$, and if $\mathbf{c}$ is a crossing of two distinct
trajectories, their lengths must sum to at most $q-1$.
\end{enumerate}
\end{thm}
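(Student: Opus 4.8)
The plan is to describe all vertices of $(\cB^q,\cA_{\pP}^q)$ explicitly and then read off the denominators of their coordinates, since $D(\cB^q,\cA_{\pP}^q)$ is the least common multiple of the denominators of the $2q$ coordinates that occur in the vertices (the least common denominator of a tuple being the l.c.m.\ of the denominators of its entries). Thus it is enough to show that the set of points of $\bbR^2$ occurring as a coordinate of some vertex is precisely the union of the boundary points lying on corner trajectories and rigid cycles of length at most $q$ and the crossing points described in (2); the theorem's l.c.m.\ formula is then immediate. I would establish this set equality by its two inclusions.

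For the forward inclusion, fix a vertex $\bz$ and a set of $2q$ linearly independent equations defining it, each a move equation $\cH^{d/c}_{ij}$ (for one of the two basic moves $m_1$, $m_2$) or a fixation $z_i\in\cE$. Partition $[q]$ by putting $i,j$ in a common block when they are joined by a chain of the chosen move equations; distinct blocks are constrained independently, and a block of $k$ pieces carries exactly $2k$ equations. Passing to the linear coordinates $u_i=z_i\cdot m_1^\perp$ and $v_i=z_i\cdot m_2^\perp$ (a bijective change of variables since $m_1\nparallel m_2$), a move equation for $m_1$ becomes $u_i=u_j$, one for $m_2$ becomes $v_i=v_j$, and a fixation becomes an affine equation in $(u_i,v_i)$. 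The $m_1$-equations in a block have rank at most $k-1$, as do the $m_2$-equations, so a block uses at most $2k-2$ move equations and hence \emph{at least two fixations}; this is exactly where having only two moves matters, and the failure of this count for three or more moves is what makes room for the corner-free configurations of Theorem~\ref{T:exponential}. Using linear independence together with this rank count, one argues that the chosen move equations of a block may be taken to organize its pieces into a disjoint union of alternating $m_1/m_2$ paths and cycles --- trajectories --- in which the pieces with two fixations sit at corners, those with one fixation are interior points of the trajectories (hence on $\partial\cB$), and those with none are points where two of these trajectories, each extended by one step, meet in the interior. Counting the distinct pieces consumed gives the length bounds: a corner trajectory or rigid cycle of length $l$ occupies $l$ pieces, so $l\le q$; a self-crossing of the one-step extension of a trajectory of length $l$ occupies one more, so $l\le q-1$; and a crossing of (the extensions of) two distinct such trajectories of lengths $l,l'$ occupies $l+l'+1$ pieces, so $l+l'\le q-1$.

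For the reverse inclusion I would realize each candidate point as a vertex coordinate and pad up to $q$ pieces. A corner trajectory of length $l\le q$ contributes its $l$ boundary points together with $2l$ equations (its $l-1$ alternating move equations, the $l$ edge-line fixations of its points, and the second edge line through its corner), and a rigid cycle of length $l$ contributes its boundary points with an equal balance of closing move equations and edge fixations; in each case the defining system is independent (immediate for a corner trajectory, part of the definition for a rigid cycle). Adjoining $q-l$ extra pieces, each pinned to a corner of $\cB$ by two fixations, yields $2q$ independent equations on a configuration of $q$ pieces all lying in $\cB$ --- a genuine vertex of $(\cB^q,\cA_{\pP}^q)$ whose coordinates include every point of the original trajectory, so those denominators divide $D(\cB^q,\cA_{\pP}^q)$. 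For a crossing point $\mathbf c$ of two distinct trajectories $T$ (length $l$) and $T'$ (length $l'$) with $l+l'\le q-1$, I would instead take the $l$ points of $T$, the $l'$ points of $T'$, and one further piece at $\mathbf c$ constrained by the two move equations that put $\mathbf c$ on the one-step extension line of $T$ from its last point and on the corresponding line of $T'$; this is $2(l+l'+1)$ equations, independent precisely because $\mathbf c$ is a transversal crossing (the two move lines through it are non-parallel), and again padding with corner-pinned pieces gives a vertex having $\mathbf c$ as a coordinate. A self-crossing of a trajectory of length $l\le q-1$ is handled in the same way, with both constraining lines drawn from the single extended trajectory $\widehat T$.

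The crux is the structural claim in the second paragraph --- that the move-equation graph of a block of a vertex can always be chosen to consist of alternating paths and cycles with all branching pushed into the interior crossing points. A priori a boundary piece could satisfy several move equations of the same direction, forcing collinearities among other pieces, and one has to show that such a situation either lowers the rank, contradicting linear independence of the defining system, or can be rewired into the trajectory-plus-crossing picture; doing this carefully, and matching the ``extension by one step'' bookkeeping at crossings to the exact piece counts, is where the real work lies. Everything else is rational linear algebra together with careful accounting of how many pieces each gadget uses.
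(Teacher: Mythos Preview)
The paper does not prove this theorem: it is quoted verbatim from Hanusa--Mahankali \cite{Arvind} (their Theorem~4.10) and then used as a black box in the proof of Proposition~\ref{P:denom2move}.  There is therefore no in-paper argument to compare your attempt against.

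On the merits of the outline itself: the change of variables $u_i=z_i\cdot m_1^\perp$, $v_i=z_i\cdot m_2^\perp$ and the resulting rank count (at most $2k-2$ independent move equations in a $k$-piece block, hence at least two fixations) are the right opening moves and isolate exactly what is special about having only two move directions.  Two points deserve care, though.  First, you reduce the theorem to the claim that the \emph{set} of points of $\bbR^2$ arising as coordinates of vertices coincides with the set of trajectory and crossing points obeying the stated length bounds; but the theorem only asserts equality of least common multiples.  It is not obvious a priori that every interior vertex coordinate is literally a crossing point meeting the length constraint --- for instance, the fourth corner $z_4=z_2+z_3-z_1$ of a move-parallelogram on a corner $z_1$ is determined by two length-$2$ corner trajectories whose lengths sum to $4$, not $q-1=3$, yet $\Delta(z_4)\mid\lcm(\Delta(z_2),\Delta(z_3))$, so the overall l.c.m.\ is unaffected.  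Whether such configurations can actually occur inside $\cB$ (they appear not to, by a convexity argument you do not give) is precisely part of the ``real work'' you defer.  Second, you correctly flag but do not carry out the rewiring of an arbitrary block's move-equation graph into alternating paths and cycles with the right piece counts; since that step is the substance of the result, what you have is a plan rather than a proof.
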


Figure~\ref{fig:twomove} shows that trajectories can involve complex dynamics and denominators that grow quickly.  The pattern of piece placements depends where the slopes fall (less than $-1$, between $-1$ and $0$, between $0$ and $1$, or greater than $1$).    
\begin{figure}[htbp]
	\includegraphics[width=1.9in]{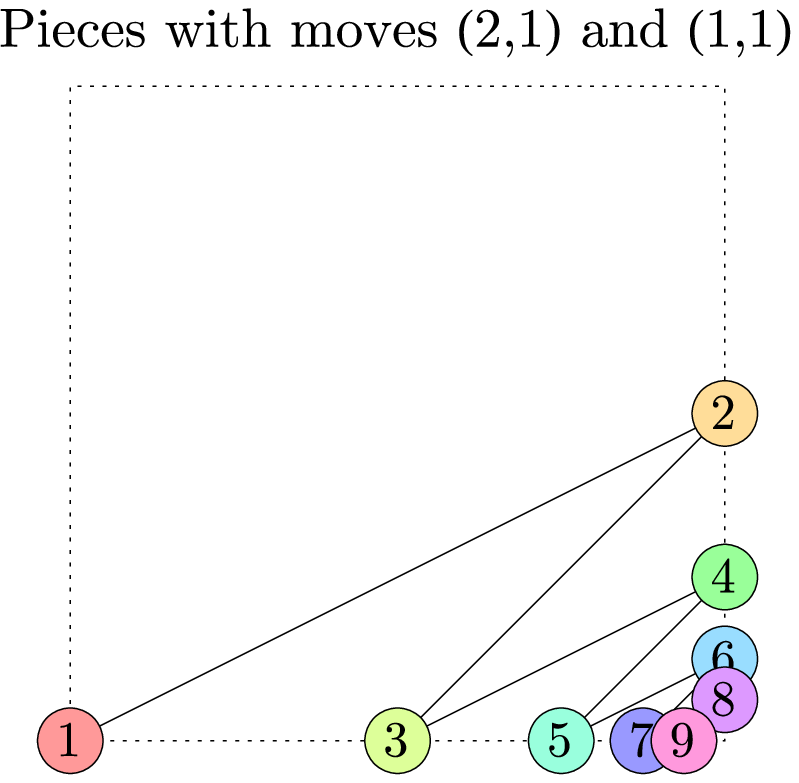}\quad\quad
	\includegraphics[width=1.9in]{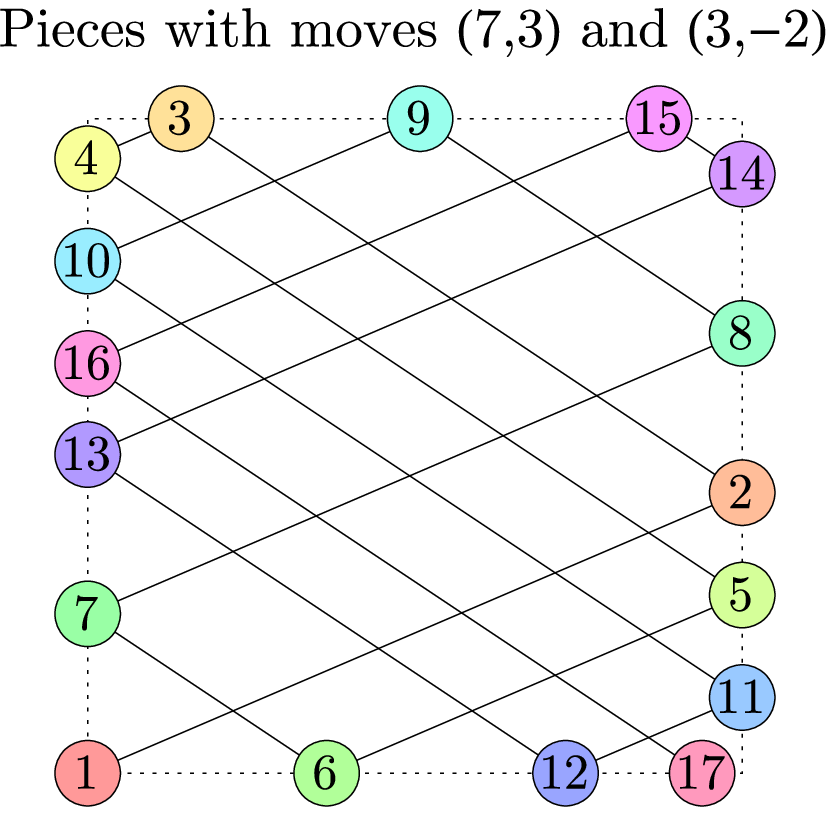}\quad\quad
	\includegraphics[width=1.9in]{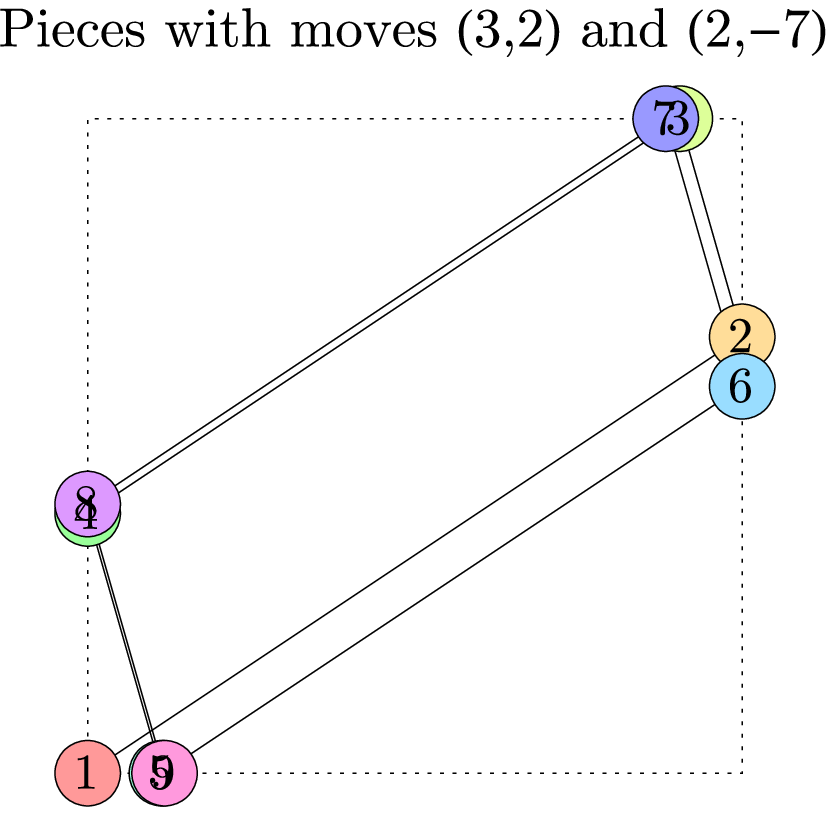}
	\caption{A two-move rider with diagonal slopes can produce configurations with arbitrarily large denominators.  The coordinates of the ninth pieces are, from left to right, $(15/16, 0)$, $(32/63, 1)$, and 
		$(22850/194481, 0)$.
		%twomove3 $(0, 7/36)$.
	}
	\label{fig:twomove}
\end{figure}
In many cases, the piece positions and the denominator of the corresponding vertex follow a pattern that we found difficult to describe completely. 

For certain two-move riders it is possible to state the denominator explicitly.

\begin{prop}\label{P:denom2move}
Let $\cB$ be the square board and let $c$ and $d$ be relatively prime positive integers. The denominator of the inside-out polytope for $q$ two-move riders with moves $(1,0)$ and $(\pm c,\pm d)$ is
\begin{equation*}
D=
\begin{cases}
1 & \text{if } q=1, \\
d & \text{if } d \geq c \text{ and } q>1, \\
c & \text{if } d < c \text{ and } 1 < q \leq 2\lfloor c/d \rfloor +1, \\
cd & \text{if } d < c \text{ and } q \geq 2\lfloor c/d \rfloor +2. 
\end{cases}
\end{equation*}
\end{prop}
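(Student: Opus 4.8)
The plan is to apply Theorem~\ref{T:Arvind} directly: we must enumerate the corner trajectories and rigid cycles for the piece with moves $(1,0)$ and $(\pm c, \pm d)$ on the square board, compute the denominators of their boundary points and of their crossing points, and see how the least common multiple of all these grows with $q$. The case $q=1$ is immediate, since a single piece gives only the corners of $\cB$, which have denominator $1$. For $q\geq 2$ the dichotomy $d\geq c$ versus $d<c$ reflects whether the sloped move $(\pm c,\pm d)$ is steep or shallow, which governs how a trajectory bounces between the top/bottom edges (via the $(\pm c,\pm d)$ move) and the left/right edges; I would treat these two cases separately.

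First I would analyze the \textbf{short trajectories} that already appear for small $q$. Starting a trajectory at a corner, say $(0,0)$, the $(1,0)$-move keeps the same $y$-coordinate, so it lands on the opposite vertical edge; the $(\pm c,\pm d)$-move from a boundary point changes $y$ by a multiple of $d/\!\max(c,d)$-type increments. A careful bookkeeping of where these alternating moves land shows that after the first few steps the $x$- or $y$-coordinates acquire denominators dividing $d$ (when the sloped move hits the top/bottom first) or $c$ (when it hits the sides first). In the steep case $d\geq c$, one should check that the reachable boundary points and crossing points never have denominator exceeding $d$, and that the value $d$ is actually attained by some corner trajectory of length at most $q$ for every $q\geq 2$ — probably by a trajectory that leaves a corner, makes one $(1,0)$-move and one sloped move, producing a point with $y$-coordinate of denominator $d$ (or using Proposition~\ref{p:1moveIV}/Theorem~\ref{T:maxcd} applied to the single move $(\pm c,\pm d)$, whose one-move denominator is $\max(|c|,|d|)=d$, as a lower bound). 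In the shallow case $d<c$, the relevant short trajectories attain denominator $c$, again compatible with the one-move bound $\max(|c|,|d|)=c$; but one must verify $cd$ is \emph{not} yet forced when $q$ is below the threshold $2\lfloor c/d\rfloor + 2$, i.e. that no corner trajectory or rigid cycle short enough to be counted at that $q$ has a point of denominator divisible by both $c$ and $d$.

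The crux is the threshold $q = 2\lfloor c/d\rfloor + 2$ in the shallow case. Here I expect the main work: one must exhibit a specific corner trajectory (or a crossing of trajectories) of total length exactly matching the counting bound in Theorem~\ref{T:Arvind}(1)--(2) whose denominator equals $cd$, and show no shorter configuration does. The number $\lfloor c/d\rfloor$ is how many shallow sloped moves it takes for the $y$-coordinate to traverse the unit interval; so a trajectory that bounces back and forth across the board accumulates $\gcd$-type denominators, and after roughly $2\lfloor c/d\rfloor$ sloped moves (interleaved with $(1,0)$-moves) the $x$-coordinate picks up a factor of $c$ on top of a $y$- or $x$-denominator of $d$ already present, giving $\lcm(c,d)=cd$ since $\gcd(c,d)=1$. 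Translating "roughly $2\lfloor c/d\rfloor$ sloped moves" into the exact length bound (and matching the $-1$ in the crossing-point clause) is the delicate counting step. I would do this by writing down the trajectory point coordinates as explicit linear recurrences in the step index, reading off the exact step at which the denominator jumps to $cd$, and then checking the extremal/forced-stopping rules to confirm that this is genuinely a trajectory of the kind Theorem~\ref{T:Arvind} counts at $q = 2\lfloor c/d\rfloor + 2$ and not before.

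Finally I would assemble the four cases: the lower bounds come from the explicit trajectories just described, and the upper bounds come from showing that \emph{every} corner trajectory, rigid cycle, and crossing point of the allowed length has denominator dividing the claimed value of $D$ — which reduces to a finite check on how denominators propagate under one $(1,0)$-step and one $(\pm c,\pm d)$-step from a point whose denominator already divides $cd$ (it cannot exceed $cd$, and it stays below $cd$ until enough sloped steps have occurred). The main obstacle, as noted, is the exact bookkeeping of trajectory lengths against the length budget in Theorem~\ref{T:Arvind}, especially getting the threshold $2\lfloor c/d\rfloor+2$ and the off-by-one in the crossing clause exactly right; everything else is propagation-of-denominators arithmetic using $\gcd(c,d)=1$.
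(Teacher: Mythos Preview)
Your plan is essentially the paper's own argument: apply Theorem~\ref{T:Arvind}, enumerate the corner trajectories on the square, and track the zigzag in the shallow case $d<c$ until it hits the top edge.  A few points the paper nails down that your outline leaves vague or slightly misdescribes:

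\begin{itemize}
\item There are \emph{no} rigid cycles or self-crossings here at all: the stopping rule at horizontal edges (parallel to the move $(1,0)$) prevents them.  You should dispose of this up front rather than leave it implicit.
\item There are only two maximal corner trajectories, from $(0,0)$ and $(1,1)$, and they are $180^\circ$ rotations of each other; so only one needs analysis.  (A corner can only sit at an endpoint of a trajectory, since the horizontal move cannot enter or leave a corner without hitting a horizontal edge.)
\item In the steep case $d\geq c$ the maximal trajectory from $(0,0)$ is simply $[(0,0),(c/d,1)]$, which already gives denominator $d$; there are no crossings with the other trajectory, and nothing further to check.
\item In the shallow case, your description of how $cd$ arises is slightly off: no single trajectory point has denominator $cd$.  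The zigzag points $(0,jd/c)$ and $(1,jd/c)$ all have denominator $c$, while the terminal point on $y=1$, namely $(c/d-\lfloor c/d\rfloor,\,1)$, has denominator $d$.  The polytope denominator is the \emph{lcm} of these, which is $cd$ exactly once the trajectory is long enough to reach the top edge, i.e.\ once $q\geq 2\lfloor c/d\rfloor+2$.
\item Crossing points are a red herring for the threshold.  The paper computes that any crossing of $\widehat T$ and $\widehat{T'}$ has $x$-coordinate of denominator $d$, but the inequality $\lceil l/2\rceil+\lfloor l'/2\rfloor>c/d$ needed for a crossing, combined with $l+l'\leq q-1$, already forces $q\geq 2\lfloor c/d\rfloor+2$.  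So crossings never contribute before the corner trajectory itself does, and you need not keep that door open.
\end{itemize}

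With these simplifications the ``delicate counting step'' you anticipate is a two-line inequality, not a recurrence.
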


\begin{proof}
We consider the two-move rider with basic move $(+c,+d)$; the other signs follow by reflecting the board.  We assume $q>1$ since otherwise the denominator trivially equals 1. There are no self-crossing trajectories nor, by the stopping rule, rigid cycles.  
A corner trajectory with basic move $(1,0)$ can only include a corner if it begins (or ends---equivalently) there since it can only reach or leave a corner by the move $(c,d)$.
The only two maximal corner trajectories, $T$ and $T'$, start at $(0,0)$ and $(1,1)$, respectively; they are $180^\circ$ rotations of each other about the center of the square.  

If $d\geq c$, then $T=[(0,0),(c/d,1)]$ is the maximal trajectory that starts at $(0,0)$.  It does not cross $T'$, so since $q\geq2$ it contributes a factor $d$ to the denominator $D$, as does $T'$.  Therefore $D=d$.

Assume $d<c$ from now on.  The trajectories zigzag back and forth across the square.  $T$ starts at $z_1=(0,0)$ and visits the points $z_2=(1,d/c)$, $z_3=(0,d/c)$, $z_4=(1,2d/c)$, and so forth until it stops.  If it is long enough, $T$ continues until it reaches $y=1$, where it must stop.  

When $d=1$ all points on $T$ lie on the maximal corner trajectory from $(0,0)$, which ends at $(1,1)$, so corner trajectories contribute a factor $c$ to the denominator $D$ (since $q\geq2$).  Consequently, $D=c$.

When $d>1$ and $q$ is large enough, the zigzag pattern can continue up to $z_{2k+1} = (0,kd/c)$, where $k=\lceil c/d \rceil-1=\lfloor c/d \rfloor$, so $z_{2k+2}$ is located along the line $y=1$ with $x$-coordinate $c/d-\lfloor c/d \rfloor$.  (See Figure~\ref{fig:pfdenom2move}.)  A trajectory from $(0,0)$ can continue to $z_{2k+2}$ if and only if $q\geq 2\lfloor c/d \rfloor+2$.  
Hence there is a corner trajectory $T$ contributing $c$ to $D$ when $q\geq 2$, and there is one contributing $d$ and $c$ when $q\geq 2\lfloor c/d \rfloor+2$ (so that $T$ can be chosen maximal). By central symmetry, the points along $T$ and $T'$ have the same denominators, so $T'$ contributes nothing new.  
\begin{figure}[htbp]
\includegraphics[width=2in]{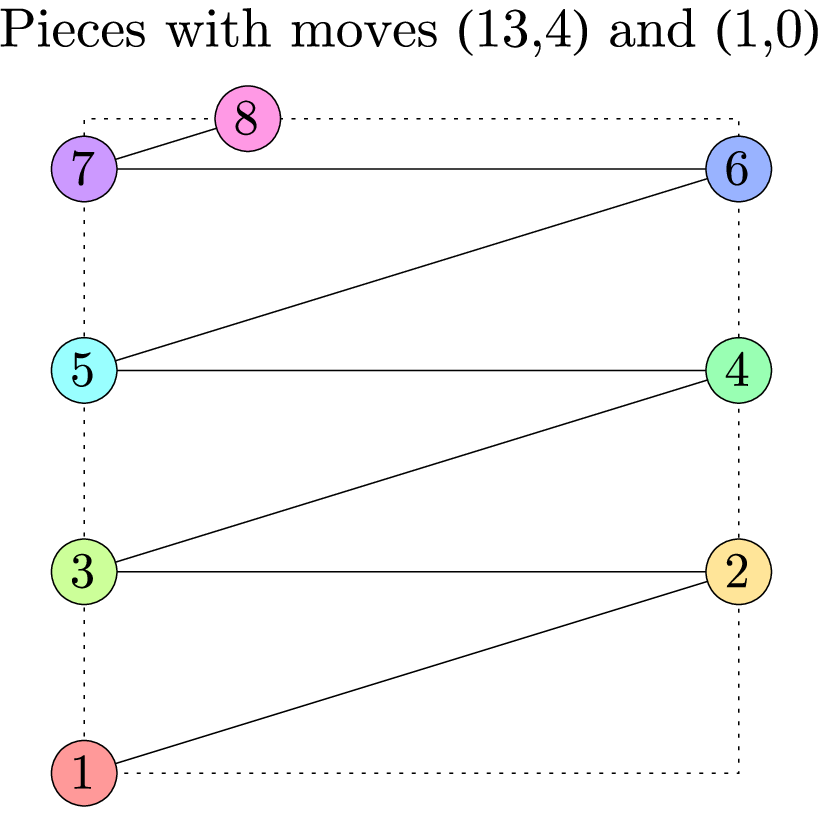}
\caption{
In a trajectory of a two-move rider with a horizontal move and $d<c$, the coordinates of pieces have denominator $c$ until a piece has $y$-coordinate $1$.  In this example $(c,d)=(13,4)$ and the coordinates of the even pieces are $\pP_2(1,4/13)$, $\pP_4(1,8/13)$, $\pP_6(1,12/13)$, and $\pP_8(1/4,1)$.
}
\label{fig:pfdenom2move}
\end{figure}

It remains to calculate the crossing points of $\widehat{T}$ (of length $l+1$) and $\widehat{T'}$ (of length $l'+1$), with the restriction that $l+l' \leq q-1$ (so there are enough pieces to occupy $T$, $T'$, and the crossing).  
Intersections occur when a sloped edge of one extended trajectory---say $\widehat{T}$, by symmetry---intersects a horizontal edge of the other trajectory, $\widehat{T'}$.  The sloped edges of $\widehat{T}$ join $(0,jd/c)$ to $(1,(j+1)d/c)$ for $0\leq j\leq \lfloor (l-1)/2\rfloor$.  The horizontal edges of $\widehat{T'}$ occur at $y$-coordinates $1-id/c$ for $1\leq i \leq \lfloor l'/2\rfloor$.   The $x$-coordinate of an intersection point of this type is $c/d-\lfloor c/d \rfloor$, whose denominator is $d$.  
When $q \geq 2\lfloor c/d\rfloor +2$, $d$ already appears in $D$ from a maximal corner trajectory; therefore a crossing contributes nothing new in that case.

On the other hand, when $q < 2\lfloor c/d\rfloor +2$ there is no crossing.  
For there to be an intersection, the sloped edge must end at a greater $y$-coordinate than that of the intersecting horizontal edge.  Hence an intersection will occur only if 
$\lfloor (l-1)/2\rfloor+\lfloor l'/2\rfloor+1>c/d$; equivalently,
\begin{equation*}\label{eq:intersection}
\bigg\lceil \frac{l}{2} \bigg\rceil + \bigg\lfloor \frac{l'}{2} \bigg\rfloor > \frac{c}{d}.
\end{equation*}
However, when $l+l'\leq q-1$, which is $\leq 2\lfloor c/d\rfloor$, 
\begin{equation*}
\bigg\lceil \frac{l}{2} \bigg\rceil + \bigg\lfloor \frac{l'}{2} \bigg\rfloor \leq 
\bigg\lceil \frac{l}{2} \bigg\rceil + \bigg\lfloor \frac{c}{d} \bigg\rfloor + \bigg\lfloor \frac{-l}{2} \bigg\rfloor = 
\bigg\lfloor \frac{c}{d} \bigg\rfloor < \frac{c}{d}.
\end{equation*}
Therefore no crossing points exist for $q\leq 2\lfloor c/d\rfloor +1$.

It follows that the overall denominator in the trajectories is $c$ if $q \leq 2\lfloor c/d \rfloor +1$ and $cd$ if $q \geq 2\lfloor c/d \rfloor +2$.
\end{proof}

Proposition~\ref{P:denom2move} applies to the anassa. 

\begin{prop}\label{prop:anassa}
On the square board, the denominator and period of the anassa are $1$.
\end{prop}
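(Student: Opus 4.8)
The plan is to recognize the anassa as a member of the family of two-move riders covered by Proposition~\ref{P:denom2move}, read off its denominator from that result, and then obtain the period from the Ehrhart-theoretic fact, recalled in the introduction, that the period divides the denominator $D$ of the inside-out polytope.

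First I would normalize the moves. By definition $\pQ^{11}$ has exactly two basic moves, one horizontal or vertical and one at $\pm45^\circ$. The dihedral symmetry group of the square $[0,1]^2$ fixes the board, permutes the orthogonal directions $\{(1,0),(0,1)\}$ among themselves and the diagonal directions $\{(1,1),(1,-1)\}$ among themselves, and acts on $\bbR^{2q}$ by an integral, unimodular change of coordinates; it therefore carries $(\cube,\cA_{\pQ^{11}})$ to an isomorphic inside-out polytope with the same denominator and the same counting quasipolynomial. Hence I may assume the two basic moves are $(1,0)$ and $(\pm1,\pm1)$, which is exactly the situation of Proposition~\ref{P:denom2move} with $(c,d)=(1,1)$ (and $1,1$ are indeed relatively prime positive integers).

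With this normalization, Proposition~\ref{P:denom2move} finishes the denominator computation: it gives $D=1$ for $q=1$, and since $d\geq c$ it gives $D=d=1$ for every $q>1$, so $D_q(\pQ^{11})=1$ for all $q\geq1$. Since the period of $u_{\pQ^{11}}(q;n)$ divides this denominator, the period is $1$; equivalently $u_{\pQ^{11}}(q;n)$ is a polynomial in $n$ for every $q$. I do not anticipate a real obstacle here, since the statement is essentially a corollary of Proposition~\ref{P:denom2move}; the one point worth checking carefully is the normalization step, namely that the anassa's two moves really are one orthogonal and one diagonal and hence fall into the stated family. Should a self-contained argument be preferred to a citation, one could instead apply Theorem~\ref{T:Arvind} directly in the degenerate case $c=d=1$, where the only maximal corner trajectories are the segments joining a corner of the square to the opposite corner, there are no rigid cycles, and there are no crossing points, so the least common multiple of the relevant denominators is $1$.
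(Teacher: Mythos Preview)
Your proposal is correct and matches the paper's approach: the paper's entire ``proof'' is the sentence ``Proposition~\ref{P:denom2move} applies to the anassa'' preceding the statement, and you have simply made explicit the normalization via the square's symmetry group and the deduction of the period from the divisibility $p\mid D$. There is nothing to add.
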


%%%%%%%%%%%%%%%%%%%%%%%%%%%%%%%%%%%%%%%%
\sectionpage
\section{Pieces with Three or More Moves}\label{sec:moremoves}

From now on we assume the square board.  

With three or more moves, three new configurations appear: a triangle of pairwise attacking pieces (Section~\ref{sec:config3m}), a golden parallelogram (Example~\ref{ex:parallelograms}), and with four moves, a twisted Fibonacci spiral (Example~\ref{ex:twisted}).  
The latter two, which combine $2q-3$ move equations and three fixations, yield the largest vertex denominators known to us (see especially Section \ref{sec:config3p}). 

A piece with at least three moves has not only new configuration types; it also enters a new domain of complexity.  There is no straightforward generalization of Theorem~\ref{T:Arvind} involving rigid cycles, like that in Figure~\ref{fig:threemoveNEW}(a), whose points are all on the boundary of the board.  We now see rigid cycles of a new kind, as in Figure~\ref{fig:threemoveNEW}(b), whose points are not all boundary points.  We also lose constancy of the denominator in a very strong way.  For every piece with at least three moves, the denominator grows exponentially or faster with $q$ (Theorem~\ref{T:exponential}); and if Conjecture~\ref{Cj:p=D} is true, the period of its counting quasipolynomial grows as fast.  

%=======

\subsection{Triangle configurations}\label{sec:config3m}\

With three (or more) moves, a new key configuration appears: a triangle of pairwise attacking pieces, whose denominator we can calculate exactly.

Consider a piece with the three basic moves $m_1=(c_1,d_1)$, $m_2=(c_2,d_2)$, and $m_3=(c_3,d_3)$. Since no move is a multiple of another, there exist nonzero integers $w_1$, $w_2$, and $w_3$ \label{d:w} with $\gcd(w_1,w_2,w_3)=1$ such that $w_1m_1+w_2m_2+w_3m_3=(0,0)$.  The $w_i$ are unique up to negating them all.  

\begin{prop}\label{P:triangular}
For $q=3$, a triangular configuration of three pieces on the square board, attacking pairwise along three distinct move directions $m_1=(c_1,d_1)$, $m_2=(c_2,d_2)$, and $m_3=(c_3,d_3)$, together with three fixations that fix its position in the square $[0,1]^2$, gives a vertex $\bz$ of the inside-out polytope.  Its denominator is 
\begin{equation}
\label{E:triangular}
\Delta(\bz)=\max(\lvert w_1c_1\rvert, \lvert w_1d_1\rvert, \lvert w_2c_2\rvert,\lvert w_2d_2\rvert,  \lvert w_3c_3\rvert, \lvert w_3d_3\rvert).
\end{equation}
\end{prop}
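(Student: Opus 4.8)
The plan is to set up the linear system explicitly and reduce it to a single rational parameter, then read off the denominator.

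\medskip

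First I would pin down the geometry. A triangular configuration has three pieces $\pP_1,\pP_2,\pP_3$ with $\pP_2-\pP_1 = r_3 m_3$, $\pP_3-\pP_2 = r_1 m_1$, $\pP_1-\pP_3 = r_2 m_2$ for some nonzero rationals $r_1,r_2,r_3$ (indices chosen cyclically so that move $m_i$ is ``opposite'' piece $\pP_i$). Adding these three equations gives $r_3 m_3 + r_1 m_1 + r_2 m_2 = (0,0)$, and since $m_1,m_2$ are linearly independent (no move is a multiple of another) the relation $w_1 m_1 + w_2 m_2 + w_3 m_3 = 0$ is the unique one up to scaling; hence $(r_1,r_2,r_3)$ is a scalar multiple of $(w_1,w_2,w_3)$, say $r_i = t\,w_i$ for a single rational scalar $t$. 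So the entire configuration is a rigid triangle of a fixed shape, translated and scaled by $t$; the three fixations then fix the translation and the scalar $t$, producing $0$ degrees of freedom, i.e.\ an actual vertex $\bz$. (I would also check the fixations can be chosen so the $2q=6$ equations are independent and the point lies in $[0,1]^2$, which is where the hypothesis ``together with three fixations that fix its position in the square'' does the work.)

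\medskip

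Next, compute the coordinates. Write $\pP_1 = (a,b)$. Then $\pP_2 = (a,b) + t w_3 m_3$ and $\pP_3 = (a,b) - t w_2 m_2$, and every coordinate of $\bz$ is an affine function of $a$, $b$, and $t$ with coefficients in $\{0\} \cup \{\pm w_i c_i, \pm w_i d_i\}$. Now invoke Lemma~\ref{L:DeltaN}: $\Delta(\bz)$ is the least $N$ for which $N\bz$ has integer coordinates subject to the scaled move equations and fixations $z_i \in N\cdot\cE$. A fixation forces one of $Na$, $Nb$, or $N(a + tw_3c_3)$, etc., to equal $0$ or $N$, all integers; so the fixations contribute nothing to the denominator beyond forcing $Na,Nb \in \bbZ$. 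The remaining coordinates are integers iff $N t w_i c_i \in \bbZ$ and $N t w_i d_i \in \bbZ$ for the relevant $i$. Because $\gcd(w_1,w_2,w_3)=1$ and each $m_i=(c_i,d_i)$ is primitive, one shows $t$ itself must be a unit fraction of the form (integer)$/\Delta$, and the binding constraint is exactly that $N t$ clears all six quantities $w_ic_i, w_id_i$ — giving $\Delta(\bz) = \max_i(|w_ic_i|,|w_id_i|)$ once one checks $t$ can be taken so that $Nt = \pm 1$ at the extremal index and the configuration still sits in the square. Here I would be careful: a priori only two of the three moves are ``used'' (a triangle needs only the relation among its three edges), so I should confirm all three pairs $\{w_ic_i,w_id_i\}$ genuinely appear — they do, since $\pP_2-\pP_3 = t(w_3m_3 + w_2 m_2) = -t w_1 m_1$, so the third move's contribution $tw_1 m_1$ is also a difference of two vertex coordinates.

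\medskip

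The main obstacle I anticipate is the coprimality/minimality bookkeeping: proving that $t$ cannot have a smaller denominator than $\max_i(|w_ic_i|,|w_id_i|)^{-1}$ times an integer, i.e.\ that no common factor among the $w_ic_i, w_id_i$ lets us shrink $N$. This needs the facts that $\gcd(c_i,d_i)=1$ for each $i$ and $\gcd(w_1,w_2,w_3)=1$ together, combined via a gcd argument across the three indices; a prime $p$ dividing all of $Ntw_ic_i$ and $Ntw_id_i$ but not $Nt$ would have to divide $w_ic_i$ and $w_id_i$ hence $w_i$ (as $c_i,d_i$ are coprime) for every $i$, contradicting $\gcd(w_1,w_2,w_3)=1$. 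Assembling that cleanly, and simultaneously verifying that the extremal choice of $t$ keeps $\bz \in [0,1]^2$ (so that it really is a vertex of the inside-out polytope and not merely of the hyperplane arrangement), is the delicate part; everything else is linear algebra.
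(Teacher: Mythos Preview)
Your parameterization by $(a,b,t)$ and the gcd minimality argument in the last paragraph are both sound and match the paper's approach. But there is a real gap in the middle.

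The scalar $t$ is not a free parameter you can ``take''; it is determined by the three fixations. The proposition asserts that \emph{any} valid choice of fixations (one that pins the triangle inside $[0,1]^2$) yields the stated denominator. Your sentence ``once one checks $t$ can be taken so that $Nt=\pm1$ at the extremal index'' is precisely the step that carries all the weight, and you have not done it. Relatedly, the claim that ``the fixations contribute nothing to the denominator beyond forcing $Na,Nb\in\bbZ$'' is wrong: the fixations are exactly what fix $t$, and hence the denominator. (For instance, if the two corner fixations land on $\pP_2$ instead of $\pP_1$, then $a$ and $b$ themselves become fractions $-tw_3c_3$, $-tw_3d_3$.)

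The paper fills this gap geometrically. It first normalizes by reflections and relabeling so that the triangle's $x$-extent is at least its $y$-extent and $x_1\le x_2\le x_3$. It then argues that for three fixations to pin a nondegenerate triangle inside $[0,1]^2$, the triangle must span the full width of the square, forcing $x_1=0$ and $x_3=1$. In your coordinates this reads $|t|\cdot|w_2c_2|=1$; and the normalization guarantees $|w_2c_2|=\max_i|w_ic_i|\ge\max_i|w_id_i|$, so $|t|=1/\max(\text{all six})$. Once $|t|$ is nailed down, every coordinate $a,b,a\pm tw_ic_i,b\pm tw_id_i$ visibly has denominator dividing $\max(\ldots)$, and your gcd argument gives equality.

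So the missing idea is this normalization--maximality step: you need to argue that the fixations force the triangle to have width exactly~$1$ in its longer direction, and that this direction realizes the maximum in the formula. Without it, nothing connects the abstract $t$ to the number $\max_i(|w_ic_i|,|w_id_i|)$.
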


The pieces may be at corners, and there may be two pieces on the same edge.  The three fixations may be choosable in more than one way but they will give the same denominator.

\begin{proof}
There is a unique similarity class of triangles with edge directions $m_1$, $m_2$, and $m_3$, if we define triangles with opposite orientations to be similar.  We can assume the the pieces are located at coordinates $z_1,z_2,z_3$ with $\max y_i - \min y_i \leq \max x_i - \min x_i$ (by diagonal reflection) $= x_3-x_1$ since we can assume $x_1 \leq x_2 \leq x_3$ (by suitably numbering the pieces), with $y_1 \leq y_3$ (by horizontal reflection), and with $z_2$ below the line $z_1z_3$ (by a half-circle rotation).  The reflections change the move vectors $m_i$ by negating or interchanging components; that makes no change in Equation~\eqref{E:triangular}.  We number the slopes so that $m_1$, $m_2$, and $m_3$ are, respectively, the directions of $z_1z_2$, $z_1z_3$, and $z_2z_3$.

Given these assumptions the triangle must have width $x_3-x_1=1$, since otherwise it will be possible to enlarge it by a similarity transformation while keeping it in the square $[0,1]^2$; consequently $x_1=0$ and $x_3=1$.  
Furthermore, the slopes satisfy $d_1/c_1 < d_2/c_2 < d_3/c_3$.  (If $c_3=0$ we say the slope $d_3/c_3=+\infty$ and treat it as greater than all real numbers.  If $c_1=0$ we say $d_1/c_1=-\infty$ and treat it as less than all real numbers.  $c_2$ cannot be 0.)  Our configuration has $d_2/c_2 \geq 0$ so two slopes are nonnegative but $d_1/c_1$ may be negative.  That gives two cases.

If $d_1/c_1 \leq 0$, we choose fixations $x_1=0$, $y_2=0$, and $x_3=1$.  
(A different choice of fixations is possible if $z_1z_2$ is horizontal or vertical, if $z_2z_3$ is horizontal or vertical, or if  $z_1z_3$ is horizontal, not to mention combinations of those cases.  Note that the denominator computation depends on the differences of coordinates rather than their values.  In each horizontal or vertical case the choice of fixations affects only the triangle's location in the square, not its size or orientation.)

If $d_1/c_1 > 0$, we choose fixations $x_1=y_1=0$ and $x_3=1$.  

The rest of the proof is the same for both cases.  First we prove that the configuration is a vertex.  That means the locations of the three pieces are completely determined by the fixations and the fact that $\bz=(z_1,z_2,z_3) \in \cH_{12}^{m_1}\cap\cH_{13}^{m_2}\cap\cH_{23}^{m_3}$.  We know the similarity class of $\triangle z_1z_2z_3$ and its orientation.  The fixations of $\pP_1$ and $\pP_3$ determine the length of the segment $z_1z_3$.  That determines the congruence class of $\triangle z_1z_2z_3$, and the fixations determine its position.  Thus, $\bz$ is a vertex.

We now aim to find the smallest integer $N$ such that $N\cdot\triangle z_1z_2z_3$ embeds in the integral lattice $[0,N]\times[0,N]$, i.e., it has integral coordinates.  
By the definition of $w_1$, $w_2$, and $w_3$, we know that $z_1'=(0,0)$, $z_2'=-w_1m_1$, and $z_3'=w_2m_2$ gives an integral triangle that is similar to $\triangle z_1z_2z_3$ and similarly or oppositely oriented, because its sides have the same or opposite slopes.  If $\triangle z_1'z_2'z_3'$ is oppositely oriented to $\triangle z_1z_2z_3$ (that means $z_2'$ is above the line $z_1'z_3'$), we can make the orientations the same by negating all $w_i$.  
Given these restrictions $\triangle z_1'z_2'z_3'$ is as compact as possible, for if some multiple $\nu \triangle z_1'z_2'z_3'$ were smaller ($0<\nu<1$) and integral, then $(\nu w_1)m_1+(\nu w_2)m_2+(\nu w_3)m_3=0$ with integers $\nu w_1, \nu w_2, \nu w_3$, so $\nu$ would be a proper divisor of 1.  By Lemma~\ref{L:DeltaN}, $N=\Delta(\bz)$.  
We can now translate $\triangle z_1'z_2'z_3'$ to the box $[0,N]\times[0,N]$ where 
\[
N=\max(\lvert w_1c_1\rvert, \lvert w_1d_1\rvert, \lvert w_2c_2\rvert,\lvert w_2d_2\rvert,  \lvert w_3c_3\rvert, \lvert w_3d_3\rvert).
\qedhere
\]
\end{proof}

\begin{exam}\label{ex:threemove}
The three-move partial nightrider has move set \(\M=\{(2,-1),(2,1),(1,2)\}\).  Because $3\cdot(2,-1)-5\cdot(2,1)+4\cdot(1,2)=(0,0)$ the denominator of its triangle configuration is 
\[\max(\lvert 6\rvert, \lvert -3\rvert, \lvert -10\rvert,  \lvert -5\rvert, \lvert 4\rvert, \lvert 8\rvert)=10,\] 
as shown in Figure~\ref{fig:threemoveNEW}(a).  

For the piece with move set \(\M=\{(1,2),(3,1),(4,3)\}\), the denominator of its triangle configuration is $4$ because the moves satisfy $(1,2)+(3,1)-(4,3)=(0,0)$.  Furthermore, all move slopes are positive, so the configuration does not entirely lie on the boundary of $\cB$, as shown in Figure~\ref{fig:threemoveNEW}(b).
\end{exam}

\begin{figure}[htbp]
\includegraphics[height=1.7in]{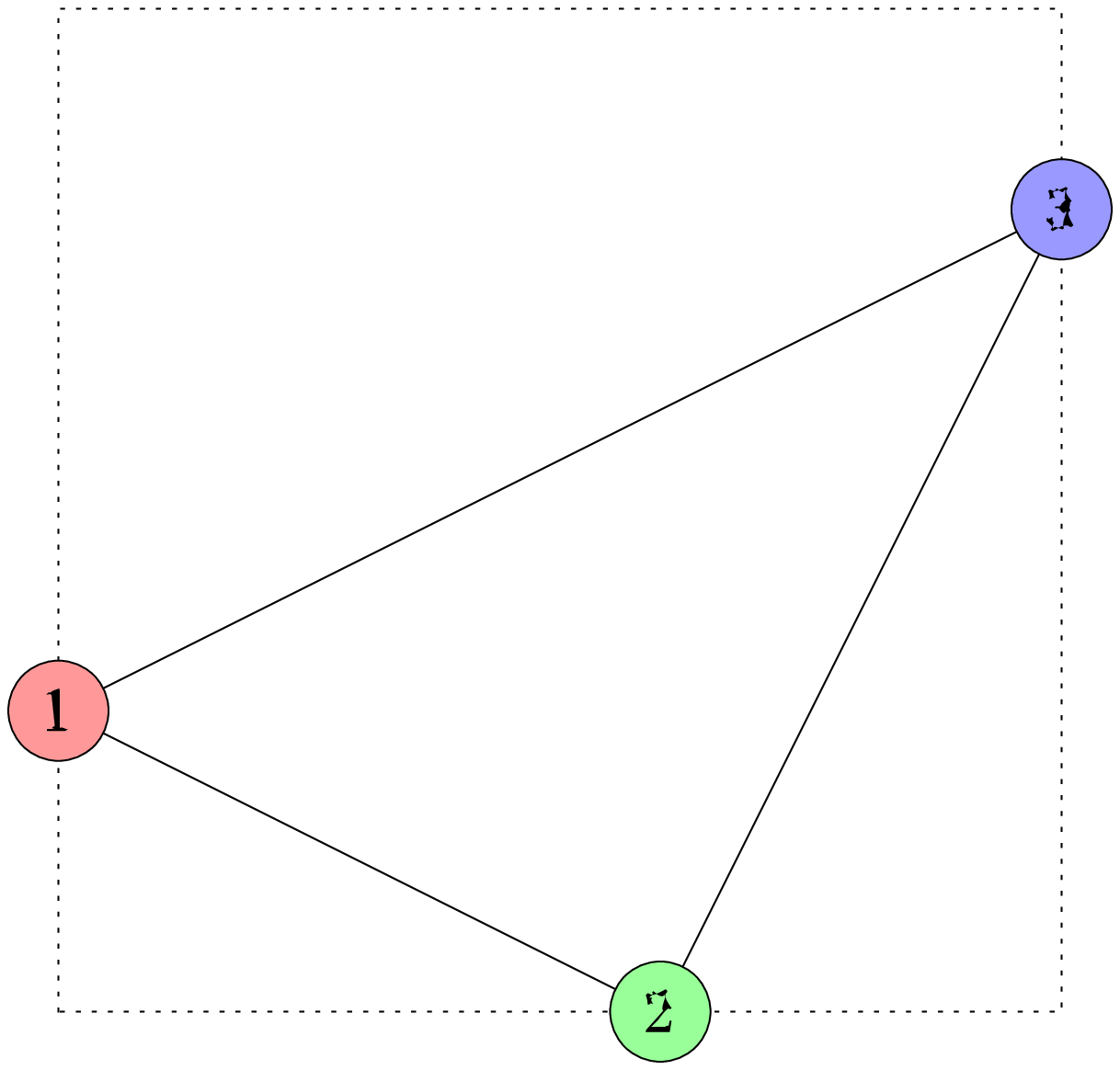} \quad \includegraphics[height=1.7in]{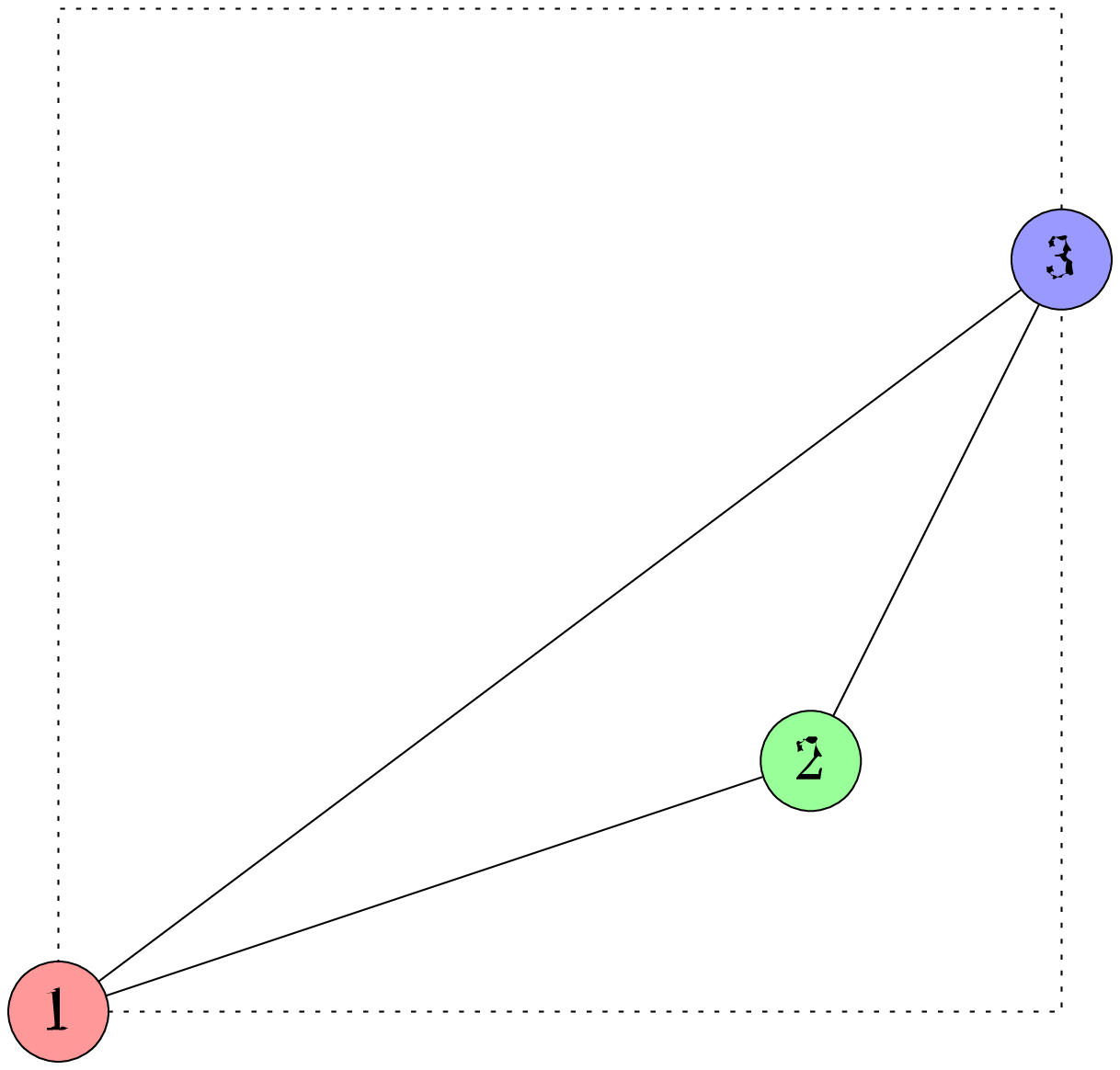}
\caption{Two integral configurations of pieces illustrating Proposition~\ref{P:triangular} when (a) $d_1/c_1 < 0$ and (b) $d_1/c_1 > 0$. (a) For $\cH^{-1/2}_{12}$, $\cH^{1/2}_{13}$, $\cH^{2/1}_{23}$, $x_1=0$, $y_2=0$, and $x_3=10$, the coordinates are $(0,3)$, $(6,0)$, and $(10,8)$, so $N=10$.  (b) For $\cH^{1/3}_{12}$, $\cH^{3/4}_{13}$, $\cH^{2/1}_{23}$, $x_1=0$, $y_2=0$, and $x_3=4$, we have $N=4$ because the coordinates are $(0,0)$, $(3,1)$, and $(4,3)$, which are not all on the boundary.}
\label{fig:threemoveNEW}
\end{figure}

%=======

\subsection{The golden parallelogram}\label{sec:config3p}\

Now we explore vertex configurations that use three moves.  We prepare for the general case by studying the \emph{semiqueen} $\pQ^{21}$, which has a horizontal, vertical, and diagonal move; we take its move set to be $\M = \{(1,0), (0,1), (-1,1)\}$.  

A {\em golden rectangle} is a rectangle whose sides are in the ratio $1{:}\phi$, $\phi$ being the golden ratio $\frac{1+\sqrt{5}}{2}$.\label{d:phi}  The rectangle that has side lengths $F_i$ and $F_{i+1}$, where the $F_i$\label{d:Fib} are Fibonacci numbers, is a close approximation to such a rectangle.  
(We index the Fibonacci numbers so that $F_0=F_1=1$.)

Many vertex configurations of $q$ semiqueens have denominator $F_{\lfloor q/2\rfloor}$.  One of them is the {\em golden rectangle configuration}, defined by the move hyperplanes  
\begin{gather*}
\cX_{4i,4i+1},\quad \cX_{4i+2,4i+6},\quad \cX_{4i+1,4i+3}, \\
\cY_{14},\quad \cY_{4i,4i+4},\quad \cY_{4i+2,4i+3},\quad \cY_{4i+3,4i+5},\\ 
\cH^{-1/1}_{2i+1,2i+2},
\end{gather*}
for all $i$ such that both indices fall between $1$ and $q$, inclusive, and the fixations $y_1=0$, $x_2=0$, and either $x_q=F_{\lfloor q/2\rfloor}$ if ${\lfloor q/2\rfloor}$ is even or $y_q=F_{\lfloor q/2\rfloor}$ if ${\lfloor q/2\rfloor}$ is odd.  
These fixations define the smallest square box that contains all pieces in the configuration.  They also serve to locate the configuration in the unit-square board, by giving the unique positive integer $N$ such that dividing by $N$ fits the shrunken configuration $\bz$ into the square board with three queens fixed on its boundary; thus $\bz$ is a vertex with denominator $\Delta(\bz)=N$  (see Lemma~\ref{L:DeltaN}).

Figure~\ref{fig:q21fib1}(a) shows the golden rectangle configuration of $12$ semiqueens; it fits in an $8\times 13$ rectangle.  Figure~\ref{fig:q21fib1}(b) is a configuration that has the same denominator and is similarly related to a discrete Fibonacci spiral (which will be explained in Section~\ref{sec:4move}, where it figures more prominently).

\begin{figure}[htbp]
\includegraphics[height=2.5in]{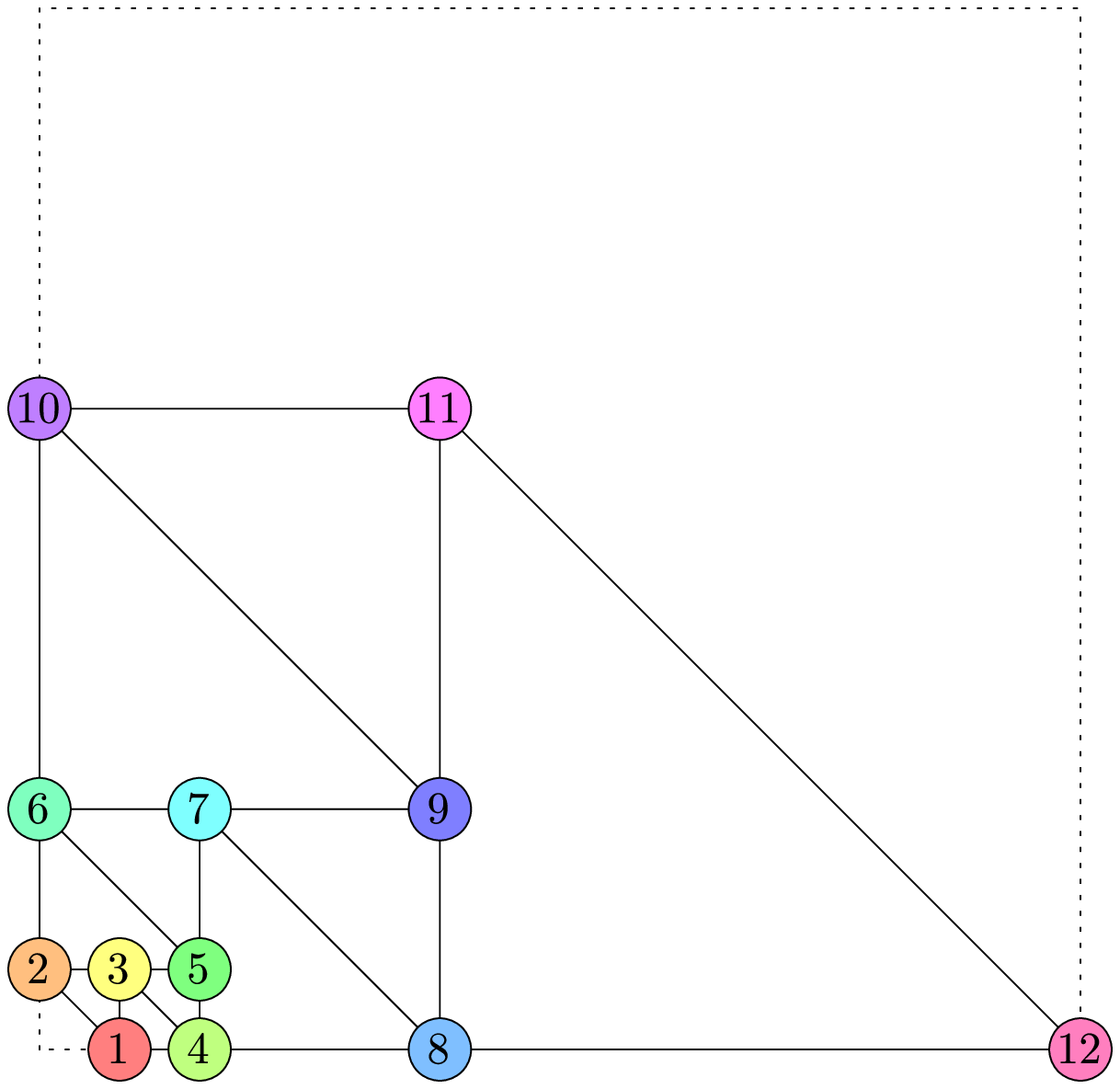}\qquad
\includegraphics[height=2.5in]{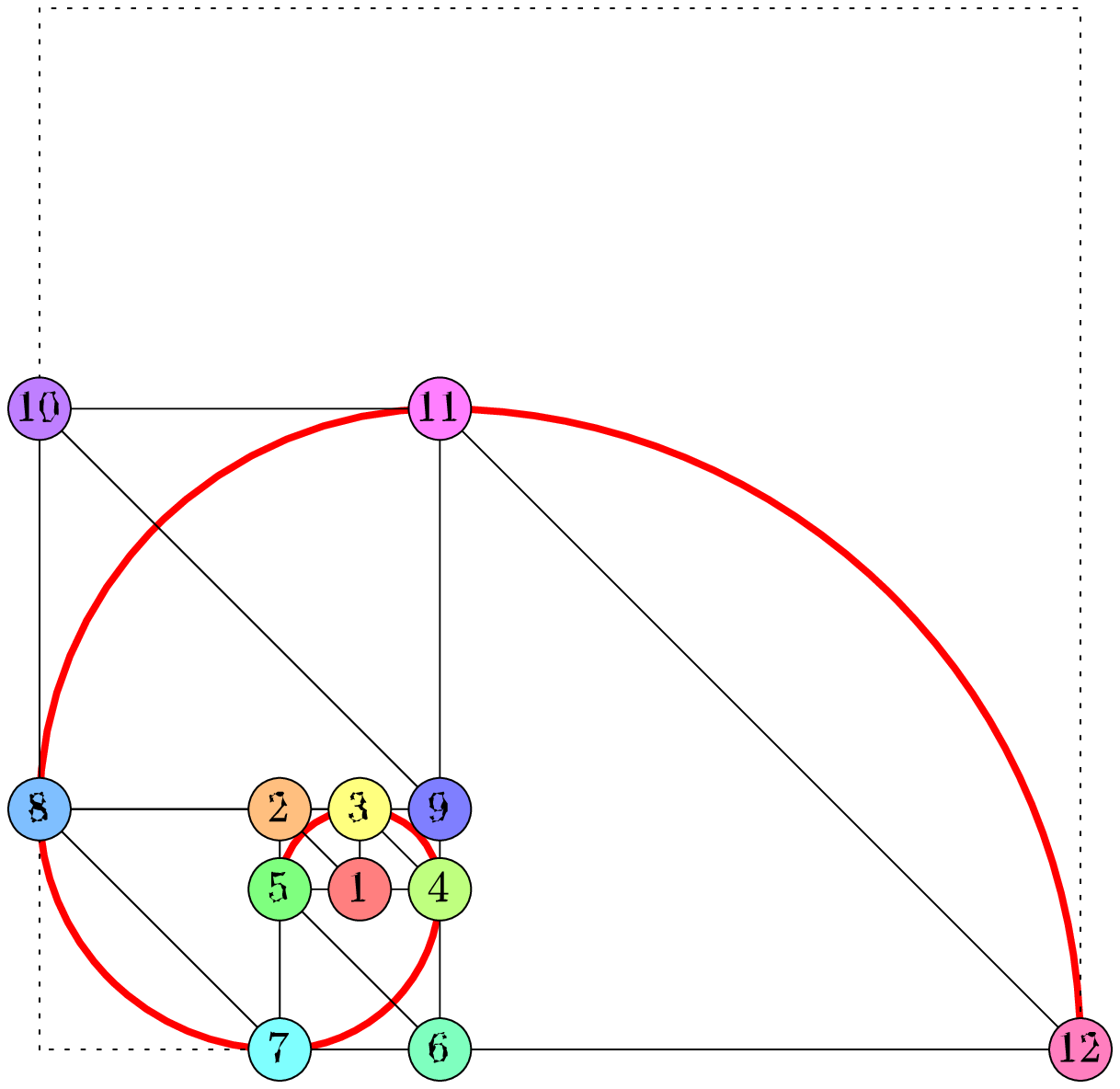}
\caption{(a) The golden rectangle configuration.  (b) A configuration based on a discrete Fibonacci spiral.}
\label{fig:q21fib1}
\end{figure}

It is straightforward to find the coordinates of $\pP_i$ in the golden rectangle configuration, which we present without proof.  We assume coordinates with origin in the lower left corner of Figure \ref{fig:q21fib1}(a).

\begin{prop}
For the semiqueen $\pP = \pQ^{21}$, when the pieces are arranged in the golden rectangle configuration, $\pP_1$ is in position $(1,0)$ and, for $i\geq 2$, $\pP_{i}$ is in position
\begin{align*}
(F_{\lfloor i/2\rfloor},0) &\qquad\text{if\/ }  i\equiv 0 \bmod 4, \\
(F_{\lfloor i/2\rfloor},F_{\lfloor i/2\rfloor-1}) &\qquad\text{if\/ } i\equiv 1 \bmod 4, \\
(0,F_{\lfloor i/2\rfloor}) &\qquad\text{if\/ } i\equiv 2 \bmod 4, \\
(F_{\lfloor i/2\rfloor-1},F_{\lfloor i/2\rfloor}) &\qquad\text{if\/ } i\equiv 3 \bmod 4.
\end{align*}
The step from $\pP_{i-1}$ to $\pP_i$ is 
\begin{align*}
F_{\lfloor i/2\rfloor-1}(1,-1) &\qquad\text{if\/ }  i\equiv 0 \bmod 4, \\
F_{\lfloor i/2\rfloor-1}(0,1) &\qquad\text{if\/ } i\equiv 1 \bmod 4, \\
F_{\lfloor i/2\rfloor-1}(-1,1) &\qquad\text{if\/ } i\equiv 2 \bmod 4, \\
F_{\lfloor i/2\rfloor-1}(1,0) &\qquad\text{if\/ } i\equiv 3 \bmod 4.
\end{align*}
\end{prop}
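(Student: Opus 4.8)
The plan is to prove the two displays together by strong induction on $i$, carrying the overall scale of the configuration as a free parameter that is fixed only at the last step. Since the $2q$ move equations and fixations defining the golden rectangle configuration cut out a vertex, their solution is unique, so it suffices to exhibit one. First I would temporarily drop the fixation on $\pP_q$ (one of $x_q = F_{\lfloor q/2\rfloor}$ or $y_q = F_{\lfloor q/2\rfloor}$) and set $t := x_1$; the remaining $2q-1$ equations are homogeneous---in particular so are $y_1 = 0$ and $x_2 = 0$---so each coordinate gets forced to an integer multiple of $t$, and the goal becomes showing $\pP_i$ lies at $t$ times the asserted coordinates.

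The induction starts with $\pP_1 = (t,0)$ from $y_1 = 0$, and then $\pP_2 = (0,t)$ from $x_2 = 0$ together with the $i=0$ instance $\cH^{-1/1}_{1,2}$ of the diagonal family, which reads $x_1+y_1 = x_2+y_2$. For the inductive step I would split on the residue of $i$ modulo $4$: when $i \equiv 1$ the equations $\cX_{i-1,i}$ and $\cY_{i-2,i}$ copy $x_i$ and $y_i$ from earlier pieces; when $i \equiv 3$ the equations $\cX_{i-2,i}$ and $\cY_{i-1,i}$ do the same; when $i \equiv 0$ a $\cY$-equation (or $\cY_{14}$ at the start) propagates $y_i = 0$ along the chain $y_1 = \cdots = y_{i-4} = y_i$ and then $\cH^{-1/1}_{i-1,i}$ determines $x_i$; and when $i \equiv 2$ an $\cX$-equation propagates $x_i = 0$ along the chain $x_2 = \cdots = x_{i-4} = x_i$ and then $\cH^{-1/1}_{i-1,i}$ determines $y_i$. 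In the two cases that invoke a diagonal equation the new coordinate emerges as a sum of two consecutive Fibonacci numbers, so the recurrence $F_k = F_{k-1} + F_{k-2}$ closes the step; along the way one checks that, as $i$ runs over $2, \dots, q$, the equations used exhaust exactly the $2q-3$ move hyperplanes in the definition, leaving none unused. Reinstating the fixation on $\pP_q$ then forces $t = 1$, because the pattern already puts the relevant coordinate of $\pP_q$ at $t\,F_{\lfloor q/2\rfloor}$---and the fixation was placed on $x_q$ exactly when $\lfloor q/2\rfloor$ is even, on $y_q$ when it is odd, which is just what the four-case formula gives. The step displays then drop out by subtracting consecutive position formulas and simplifying with $F_{k+1} - F_{k-1} = F_k$ and $F_k - F_{k-2} = F_{k-1}$.

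The content is bookkeeping, not a conceptual difficulty; the one place I expect to need care is the index arithmetic at the two ends. At the low end several listed hyperplanes are vacuous---for example $\cX_{4i,4i+1}$ and $\cY_{4i+3,4i+5}$ never occur with $i=0$---and their roles are taken by the fixations $y_1 = 0$ and $x_2 = 0$; at the high end the requirement that both indices lie between $1$ and $q$ truncates every family, which is precisely why the step for $\pP_q$ must use the final fixation rather than a move hyperplane. (A shorter but less revealing route is simply to substitute the claimed coordinates into every equation family and verify each identity, using the recurrence only for $\cH^{-1/1}_{2i+1,2i+2}$; this relies on the configuration already being known to be a vertex, whereas the induction above re-derives that fact, since the $2q-1$ homogeneous equations pin down the configuration up to the scale $t$ and the $\pP_q$ fixation removes that freedom, so $\bz$ is a vertex with $\Delta(\bz) = F_{\lfloor q/2\rfloor}$.)
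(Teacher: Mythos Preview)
Your induction is correct and supplies exactly the verification that the paper omits: the authors state the proposition ``without proof,'' calling it straightforward, so there is nothing to compare against beyond the fact that your bookkeeping is the natural way to carry out that straightforward check. Your counting of $2q-3$ move hyperplanes and the piece-by-piece exhaustion of them are right (I checked the case $q=12$ equation by equation), and the homogeneity argument correctly shows the system has a one-parameter family of solutions fixed by the final fixation.

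One small slip in your aside about the low end: $\cY_{4i+3,4i+5}$ \emph{does} occur at $i=0$, giving $\cY_{3,5}$, which your own induction uses to determine $y_5$. The hyperplane that is genuinely vacuous at the bottom (and whose role is taken by the fixation $x_2=0$) is $\cX_{4i+2,4i+6}$ at $i=-1$, i.e.\ the nonexistent $\cX_{-2,2}$. This does not affect your argument, only the illustrative example.
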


The next key idea is that we can apply a linear transformation to the golden rectangle configuration to create {\bf six} \emph{golden parallelogram configurations} (some of which may coincide if there is symmetry in the move set) for any piece with three (or more) moves.  
To define the golden parallelogram, in the golden rectangle configuration consider the semiqueens $\pQ^{21}_1$ at position $(1,0)$, $\pQ^{21}_2$ at $(0,1)$, and $\pQ^{21}_3$ at $(1,1)$.  They form the smallest possible triangle. 
For an arbitrary piece $\pP$ with moves $m_1$, $m_2$, and $m_3$, we consider the smallest integral triangle involving three copies of $\pP$, which we discussed in Proposition~\ref{P:triangular}.  We apply to the golden rectangle configuration a linear transformation that takes vectors $( 1,0)$ and $( 0,1)$ to any two of the vectors $w_1m_1$, $w_2m_2$, and $w_3m_3$, with a minus sign on one of them if needed to ensure that the third side of the triangle has the correct orientation.  That transforms the golden rectangle with the $\pQ^{21}_i$ in their locations to a golden parallelogram with pieces $\pP_i$ in the transformed locations and with $\pP_1, \pP_2, \pP_3$ forming the aforementioned smallest triangle; hence, there are six possible golden parallelograms.

\begin{exam}\label{ex:parallelograms}
For the three-move partial nightrider (Example \ref{ex:threemove}) the vectors are $w_1m_1=( 6,-3)$, $w_2m_2=(-10,-5)$, and $w_3m_3=( 4,8)$.  The corresponding six distinct golden parallelogram configurations are in Figure~\ref{fig:parallelograms}.  The precise linear transformations are given in Table~\ref{tab:parallelograms}.  Of these six parallelograms, the one yielding the largest denominator is that in the upper left. 
\end{exam}
\begin{figure}[htbp]
\includegraphics[height=1.9in]{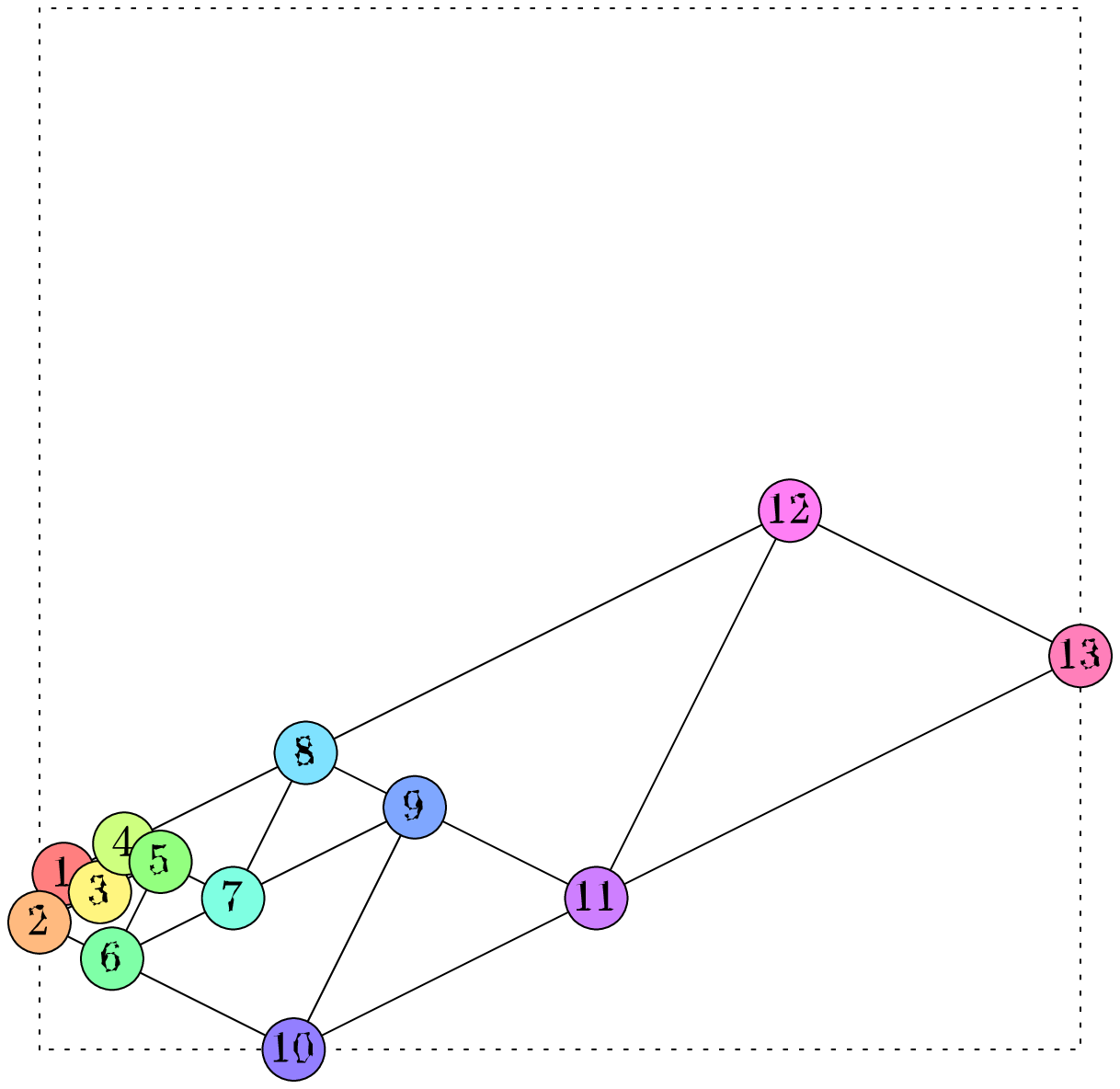}
\includegraphics[height=1.9in]{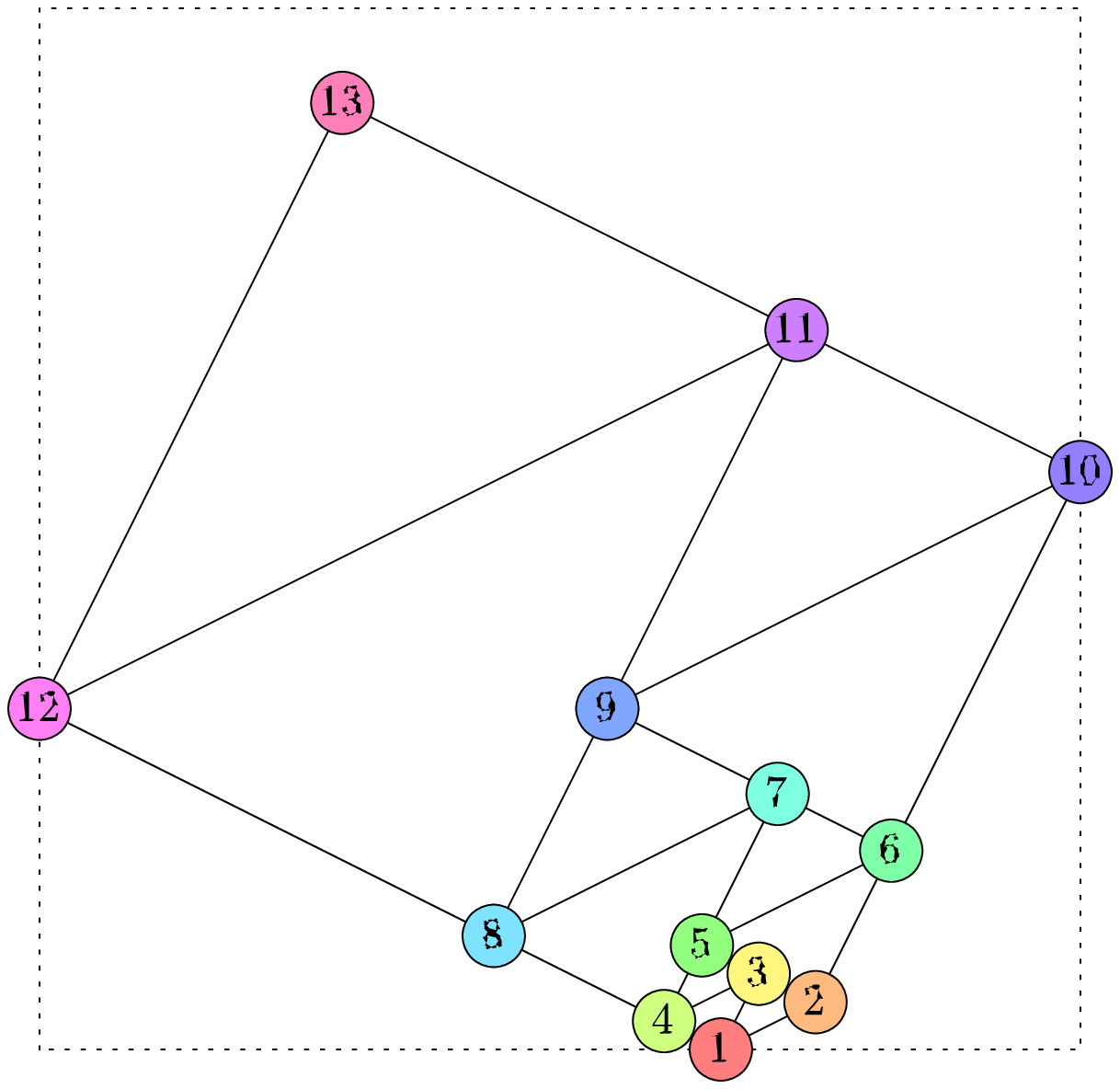}
\includegraphics[height=1.9in]{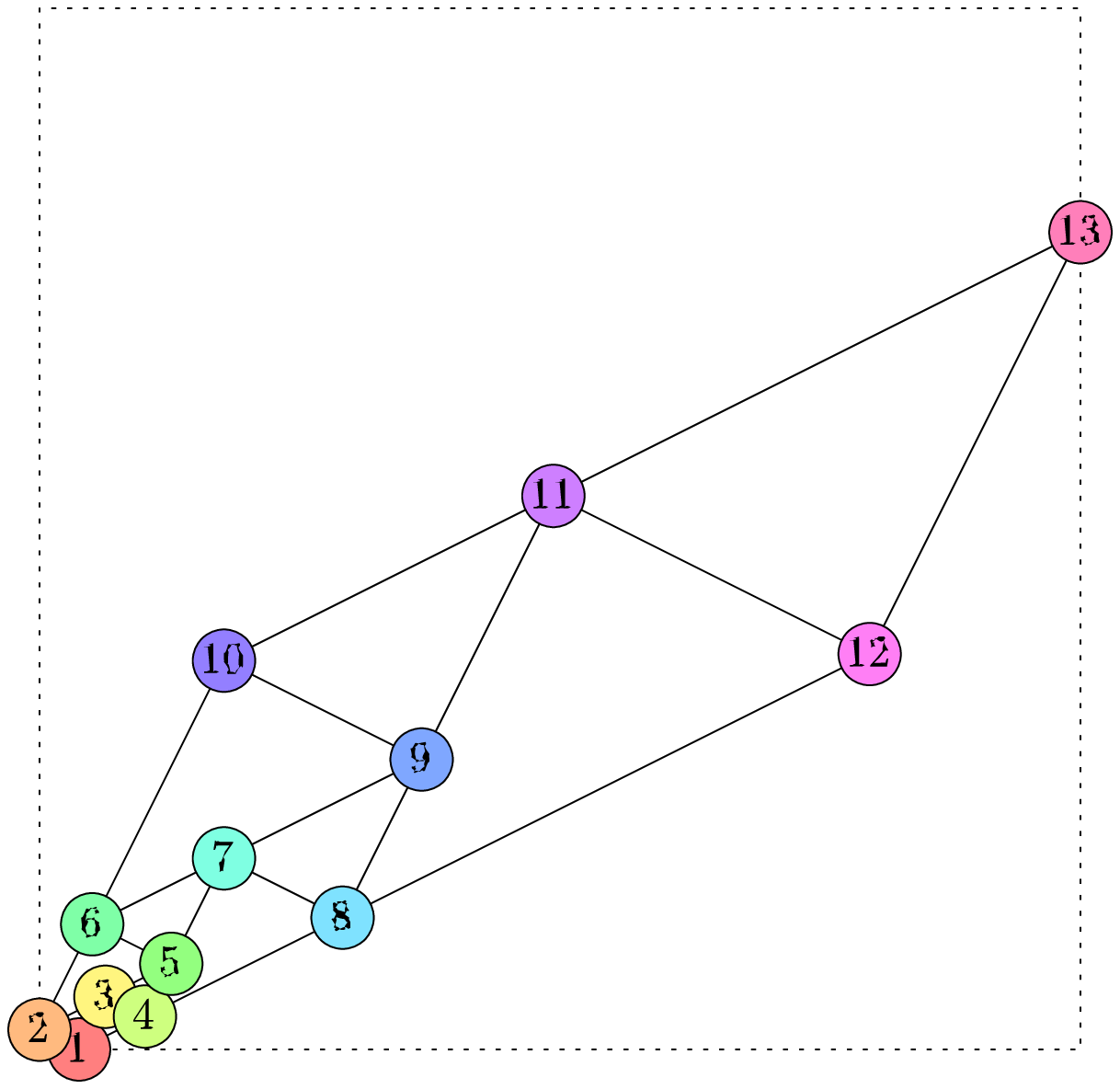}
\includegraphics[height=1.9in]{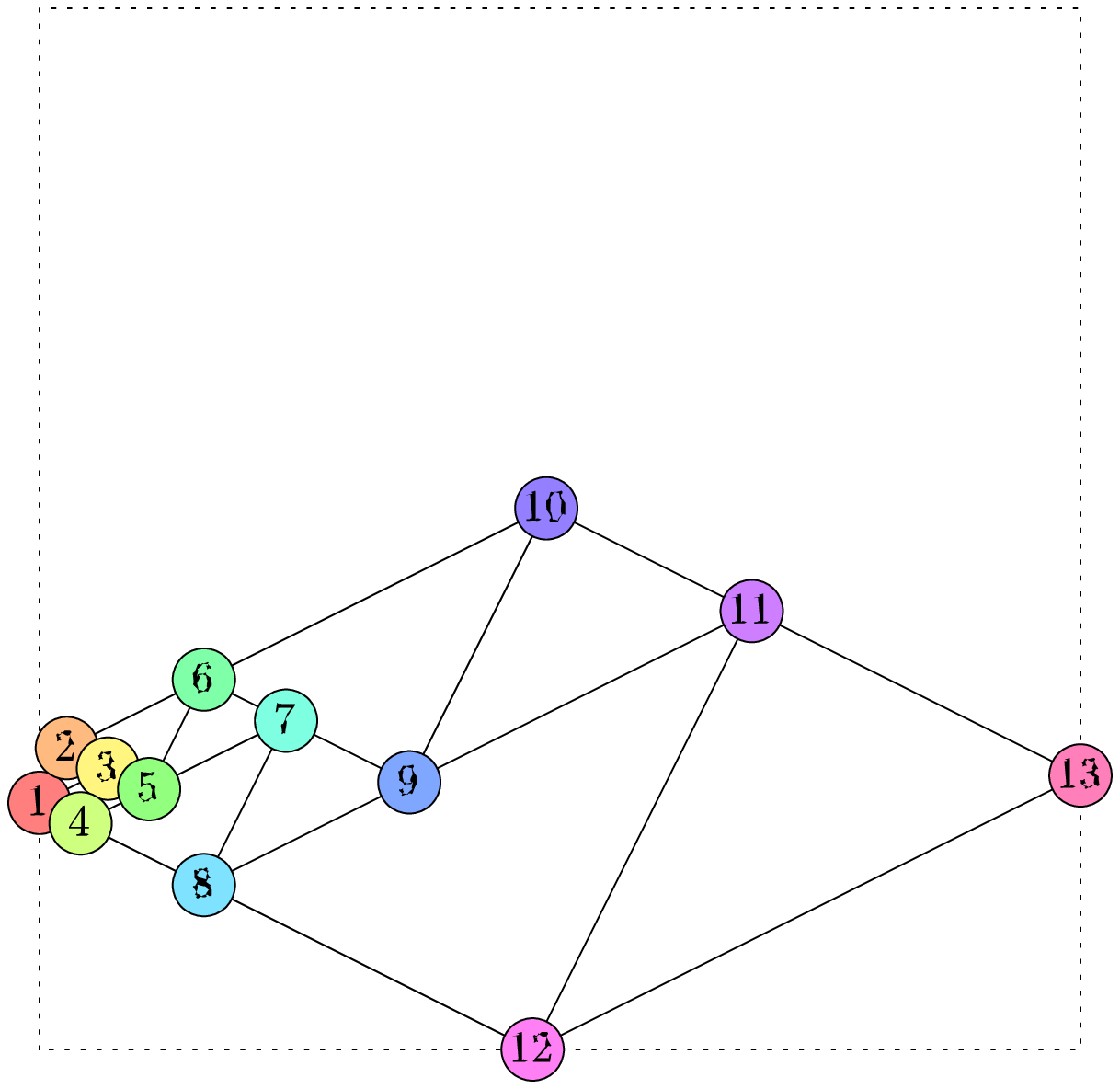}
\includegraphics[height=1.9in]{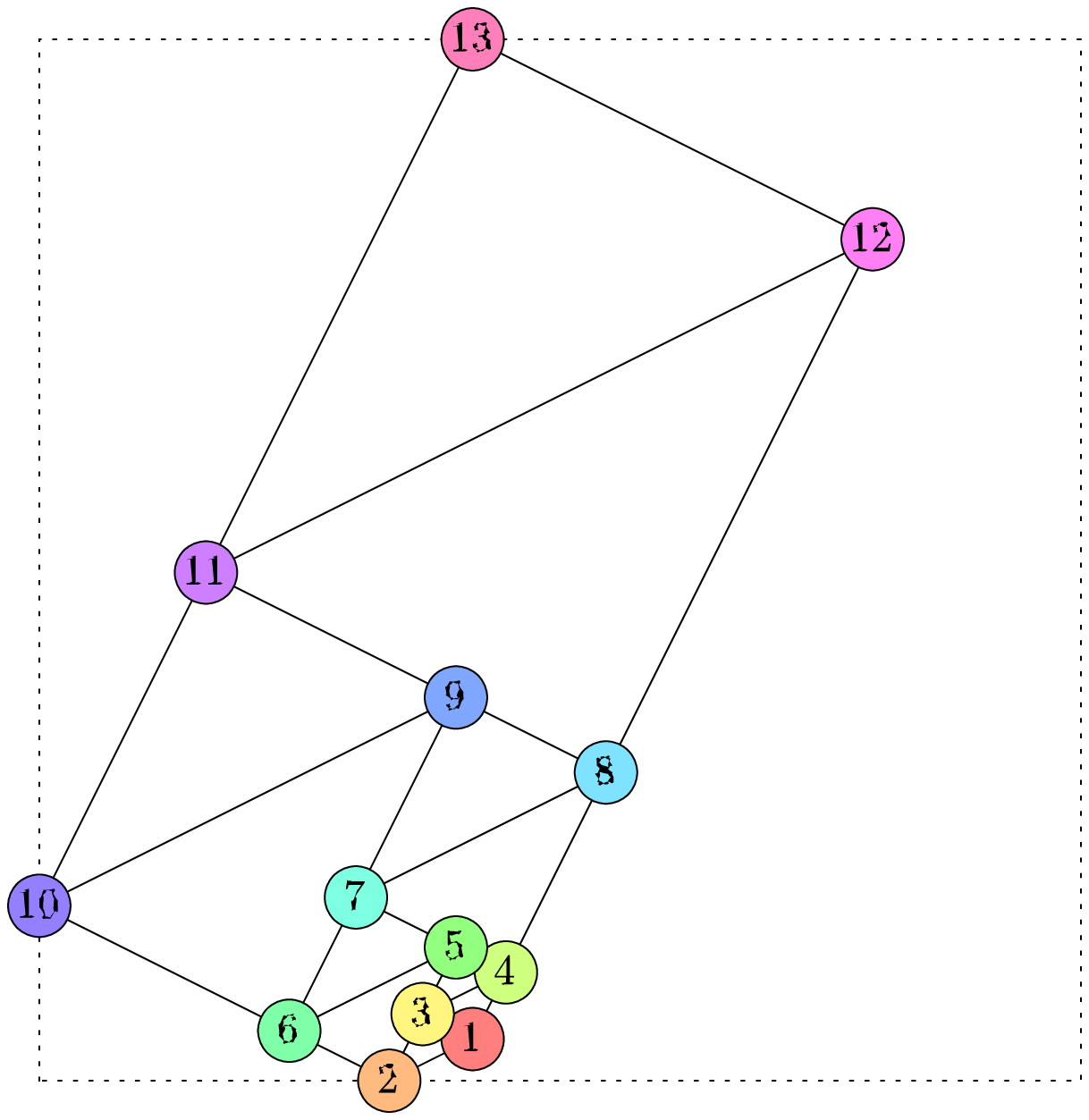}
\includegraphics[height=1.9in]{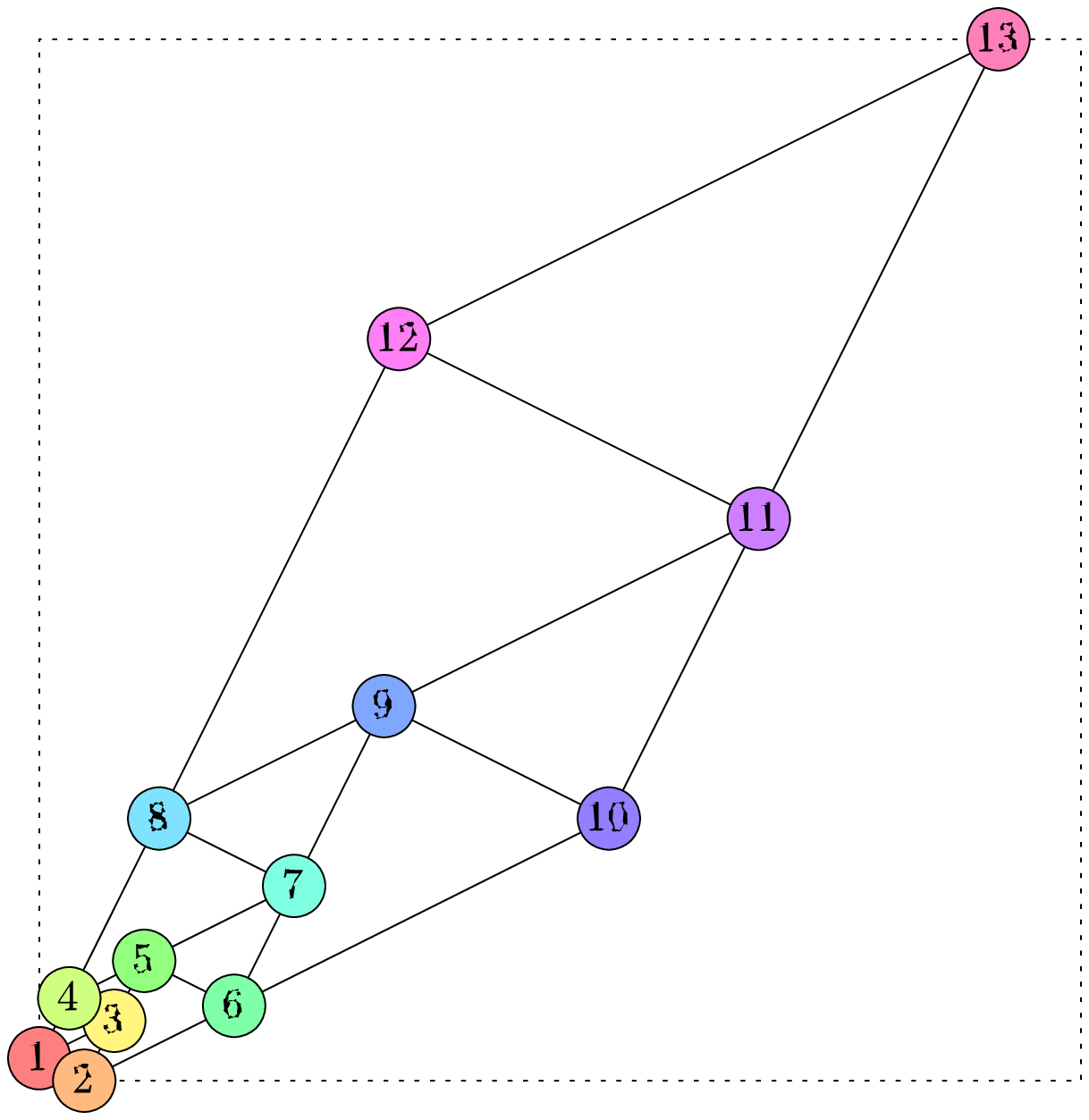}
\caption{The six golden parallelograms for 13 three-move partial nightriders.  The corresponding linear transformations are given in Table~\ref{tab:parallelograms}.}
\label{fig:parallelograms}
\end{figure}

\begin{table}[htbp]
\begin{tabular}{|c|c|c|c|}\hline
\begin{tabular}{c}
Transformation
\end{tabular} & 
\begin{tabular}{c}
$( 1,0) \mapsto ( 10,5)$ \\  
$( 0,1) \mapsto ( 6,-3)$ 
\end{tabular} \vstrut{20pt}&
\begin{tabular}{c}
$( 1,0) \mapsto ( -6,3)$ \\  
$( 0,1) \mapsto ( 4,8)$ 
\end{tabular} &
\begin{tabular}{c}
$( 1,0) \mapsto ( 10,5)$ \\  
$( 0,1) \mapsto ( 4,8)$ 
\end{tabular} \\ 
\hfill $\Delta$ \hspace*{1em}&\hfill 172 \hspace*{1em}&\hfill 110 \hspace*{1em}&\hfill  158 \hspace*{1em}\\ \hline
\begin{tabular}{c}
Transformation
\end{tabular} & 
\begin{tabular}{c}
$( 1,0) \mapsto ( 6,-3)$ \\  
$( 0,1) \mapsto ( 10,5)$ 
\end{tabular} \vstrut{20pt}&
\begin{tabular}{c}
$( 1,0) \mapsto ( 4,8)$ \\  
$( 0,1) \mapsto ( -6,3)$ 
\end{tabular} &
\begin{tabular}{c}
$( 1,0) \mapsto ( 4,8)$ \\  
$( 0,1) \mapsto ( 10,5)$ 
\end{tabular} \\ 
\hfill $\Delta$ \hspace*{1em}&\hfill  152 \hspace*{1em}&\hfill 125 \hspace*{1em}&\hfill  139 \hspace*{1em}\\ \hline
\end{tabular}
\smallskip

\label{tab:parallelograms}
\caption{The linear transformations corresponding to the golden parallelogram configurations of 13 pieces in Figure~\ref{fig:parallelograms}, along with the denominator $\Delta$ for each configuration.}
\end{table}

These golden parallelograms appear to maximize the denominator; from them we may infer conjectural formulas for the largest denominators.

\begin{conj}\label{Cj:3movemax}
For a piece with exactly three moves, one of the golden parallelogram configurations gives a vertex with the largest denominator.  
\end{conj}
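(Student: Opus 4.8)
The plan is to reduce Conjecture~\ref{Cj:3movemax} to an extremal problem about the combinatorial type of a vertex and then to settle that extremal problem by a Fibonacci-style recursion. Recall from Lemma~\ref{L:DeltaN} that a vertex $\bz$ of $(\cube,\cA_\pP)$ is cut out by $2q$ equations --- move equations $\cH^{d/c}_{ij}$ and fixations $x_i,y_i\in\{0,N\}$ --- and that $\Delta(\bz)$ is the least $N$ making the solution integral; equivalently $\Delta(\bz)$ divides the absolute value of the determinant of the $2q\times 2q$ defining matrix, the quotient being a gcd of subdeterminants that must be controlled separately. First I would record the \emph{combinatorial type} of $\bz$: the graph $G$ on $[q]$ whose edges are the pairs $\{i,j\}$ for which $\cH^{d/c}_{ij}$ is one of the chosen hyperplanes, each edge labelled by its move $m_k\in\M$, together with the list of fixations. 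Since the move equations $\cH^{d/c}_{ij}$ constrain only the relative positions $z_j-z_i$, the shape of $\bz$ (up to translation and scaling) depends only on $G$ with its labels, and $\Delta(\bz)$ is the side of the smallest axis-aligned square into which the smallest integral representative of that shape fits with the prescribed pieces on the boundary. Thus the conjecture becomes: among all labelled graphs $G$ on $q$ vertices that arise this way, the golden-parallelogram type maximizes this quantity.

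Next I would decompose $G$ into connected components. The denominator of $\bz$ is the least common multiple of the denominators contributed by the components (compare Proposition~\ref{P:Dincrease}), so it suffices to bound the contribution of a single connected component on $k\le q$ vertices and then to check that packing all $q$ pieces into one component of golden-parallelogram type beats any partition into several components. Within a connected component, choosing a spanning tree writes $z_j-z_i=\sum_e t_e m_e$ (sum over the tree path), and the non-tree edges --- the independent cycles --- impose linear relations on the scalars $t_e$. Writing each cycle relation in the form $\sum_k\bigl(\text{sum of }t_e\text{ over edges labelled }m_k\bigr)m_k=(0,0)$ and using $w_1m_1+w_2m_2+w_3m_3=(0,0)$ from Proposition~\ref{P:triangular}, one obtains an integer linear system for the $t_e$ whose solution denominators, hence the size of the smallest integral representative, are governed by a determinant built from the cycle space of $G$ and the numbers $w_kc_k$, $w_kd_k$. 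The base case --- a single cycle --- is exactly Proposition~\ref{P:triangular} and its obvious generalization, which yields a denominator bounded independently of $q$; the unbounded growth must therefore come from how cycles are \emph{nested} along a spiral.

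The heart of the argument, and the step I expect to be the main obstacle, is the extremal bound: show that as one builds a component vertex by vertex, adding two pieces multiplies the achievable denominator by at most a factor $\phi$, with equality forced exactly by the golden-parallelogram incidence pattern. Concretely I would first treat the semiqueen $\pQ^{21}=\{(1,0),(0,1),(-1,1)\}$, for which the six parallelograms nearly coincide: classify its vertex shapes and prove by induction on $q$ that the golden rectangle configuration is denominator-maximal, the recursion $F_{i+1}=F_i+F_{i-1}$ arising because each step of the zigzag combines the two previous displacements. Then I would transfer to a general three-move piece $\pP$ via the linear map $A$ sending two of the vectors $w_km_k$ to $(1,0)$ and $(0,1)$: $A$ carries $\pP$-shapes bijectively to $\pQ^{21}$-shapes but distorts the bounding square, so the $\pP$-denominator of a shape is the larger side of the image of a $\pQ^{21}$-shape measured in the skewed coordinates determined by $A^{-1}$. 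One must then show that for each of the six admissible choices of $A$ the skewed-maximal shape is still the golden rectangle, and that the overall maximum over the six choices is realized by the parallelogram named in the conjecture.

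The obstacles I foresee are real. The space of admissible labelled graphs $G$ is large and lacks an obvious monovariant, so the inductive step probably requires the right generalized-Fibonacci potential function that is provably submultiplicative under adding a piece. The constraint ``fit inside $[0,1]^2$ with exactly three boundary fixations'' interacts with the combinatorics in a way that is easy to underestimate --- a clever configuration could in principle trade interior growth for a tighter boundary fit --- so the enclosing-square analysis cannot be separated cleanly from the graph optimization. Finally, passing from ``$\Delta(\bz)$ divides $\lvert\det M\rvert$'' to the exact value of $\Delta(\bz)$ demands a uniform handle on the gcd of the relevant subdeterminants, which is exactly where the exceptional low-denominator pieces of Theorem~\ref{prop:denom1} live and where a careless bound would fail.
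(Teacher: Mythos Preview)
The statement you are trying to prove is labelled and treated in the paper as a \emph{conjecture}, not a theorem: the paper offers no proof of Conjecture~\ref{Cj:3movemax}, only the heuristic that the golden parallelograms ``appear to maximize the denominator'' together with some supporting computations and the specializations to Conjectures~\ref{conj:semiqueen} and~\ref{conj:trident}. So there is no paper proof to compare against; any correct argument you supply would be new mathematics.

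That said, what you have written is a research plan, not a proof, and you are candid about this. The genuine gap is exactly the ``heart of the argument'' you flag: the claim that adding two pieces to a connected component can multiply the denominator by at most $\phi$, with equality only for the golden-parallelogram incidence pattern. Nothing in your outline forces this. The determinant and cycle-space formalism you set up gives an \emph{upper} bound on $\Delta(\bz)$ via $\lvert\det M\rvert$, but the conjecture asks for the \emph{maximum} of $\Delta(\bz)$ over all vertex types, so you would need to bound that determinant (or rather the exact value $\Delta(\bz)$, after the gcd correction you mention) uniformly over all admissible labelled graphs on $q$ vertices --- and the space of such graphs is not just large, it is not even obviously parametrized in a way compatible with an inductive ``add one piece'' step, since a new piece can attach by one, two, or three move edges to arbitrary existing pieces and may or may not carry a fixation. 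Your proposed transfer from $\pQ^{21}$ to a general three-move piece via the linear map $A$ also has a real gap: the map distorts the bounding square into a parallelogram, so even if the golden rectangle is extremal for $\pQ^{21}$ in the axis-aligned sense, there is no reason the $A^{-1}$-image of the extremal $\pQ^{21}$ shape is extremal for the skewed $\ell^\infty$-norm that governs $\pP$'s denominator --- the six choices of $A$ do not exhaust the vertex types of $\pP$, only the parallelogram types. In short, your framework is reasonable and the obstacles you list are the right ones, but none of them is overcome here; the conjecture remains open.
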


Suppose the linear transformation that creates a golden parallelogram carries $(1,0) \mapsto w_1m_1 = (w_1c_1,w_1d_1)$ and $(0,1) \mapsto w_2m_2 = (w_2c_2,w_2d_2)$.  It is possible to write an explicit formula for the denominator of the resulting golden parallelogram configuration.  The computation has not more than $128 = 4\cdot2^4\cdot2$ cases, with one case for each value of $q \mod 4$ and one subcase for each of the $2^4$ sign patterns of the components of $w_1m_1$ and $w_2m_2$ (sign $0$ can be combined with sign $+$), and in some of those subcases one further subcase for each of the $2^2$ magnitude relations between $|w_1c_1|$ and $|w_2c_2|$ or between $|w_1d_1|$ and $|w_2d_2|$.  We have not computed the entire formula.  
Nevertheless, we can give an exponential lower bound, thereby obtaining a lower bound on the denominator of the inside-out polytope.

\begin{thm}\label{T:exponential}
The denominators $D_q(\pP)$ of any piece that has three or more moves increase at least exponentially in $q$.  Specifically, they are bounded below by $\frac12\phi^{q/2}$ when $q\geq12$, where $\phi$ is the golden ratio.
\end{thm}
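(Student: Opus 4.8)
The plan is to exhibit, for each piece $\pP$ with at least three moves, an explicit family of vertices of the inside-out polytopes $(\cube,\cA_\pP^q)$ whose denominators grow like $\phi^{q/2}$. The natural candidates are the golden parallelogram configurations constructed in Section~\ref{sec:config3p}: fix three basic moves $m_1,m_2,m_3$ of $\pP$ with integer coefficients $w_1,w_2,w_3$ satisfying $w_1m_1+w_2m_2+w_3m_3=(0,0)$, and apply the linear transformation sending $(1,0)\mapsto w_1m_1$, $(0,1)\mapsto w_2m_2$ to the semiqueen golden rectangle configuration of $q$ pieces from the Proposition preceding Conjecture~\ref{Cj:3movemax}. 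By that construction this is a vertex configuration of $q$ copies of $\pP$, so its denominator divides $D_q(\pP)$; it therefore suffices to bound that denominator below.

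The key computation is to estimate $\Delta(\bz)$ for the transformed configuration. By Lemma~\ref{L:DeltaN}, $\Delta(\bz)$ is the smallest $N$ for which $N\bz$ is integral subject to the fixations, so $\Delta(\bz)$ equals the side length of the smallest square box into which the transformed configuration fits. From the explicit coordinates in the Proposition, the semiqueen golden rectangle configuration of $q$ pieces fits exactly in an $F_{\lfloor q/2\rfloor-1}\times F_{\lfloor q/2\rfloor}$ box, i.e.\ it has "width" $F_{\lfloor q/2\rfloor}$ in at least one of the two coordinate directions (and $F_{\lfloor q/2\rfloor-1}$ in the other). Applying the linear map $A$ with $A(1,0)=w_1m_1$, $A(0,1)=w_2m_2$, the extreme coordinate spread of the image is at least $\max$ over coordinates of $|A e|$ applied to a vector $e$ realizing the larger Fibonacci spread; concretely one checks that one of the coordinate-extents of the transformed configuration is at least $F_{\lfloor q/2\rfloor}\cdot\big(\text{something}\ge 1\big)$ minus a correction of order $F_{\lfloor q/2\rfloor -1}$ coming from the other direction. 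Since $w_1m_1$ has a nonzero integer entry, the dominant term is at least $F_{\lfloor q/2\rfloor}$, and after subtracting the lower-order Fibonacci correction one still obtains $\Delta(\bz)\ge c\,F_{\lfloor q/2\rfloor}$ for an absolute constant $c$. Using $F_k\ge\phi^{k-1}$ (with the indexing $F_0=F_1=1$, so $F_k\sim\phi^{k+1}/\sqrt5$, hence $F_k>\phi^{k-1}$ for all $k\ge0$) and $\lfloor q/2\rfloor\ge q/2-1$, this gives $\Delta(\bz)>\tfrac12\phi^{q/2-2}$, and a cleaner bookkeeping of the constant (choosing the transformation/direction that realizes the $F_{\lfloor q/2\rfloor}$ extent without the subtractive correction, valid once $q\ge12$) upgrades this to the stated $\Delta(\bz)\ge\tfrac12\phi^{q/2}$.

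The main obstacle I anticipate is the careful bookkeeping in that lower bound: the transformed configuration is a \emph{parallelogram}, not an axis-aligned rectangle, so the smallest enclosing square box is governed by the maximal spread in the $x$- and $y$-directions separately, and both directions mix contributions from $w_1m_1$ and $w_2m_2$ with Fibonacci weights. One must verify that in at least one coordinate the $F_{\lfloor q/2\rfloor}$-weighted term genuinely dominates and is not cancelled — this is where the four residue classes of $q\bmod4$ from the Proposition, together with the sign patterns of the components of $w_1m_1,w_2m_2$, come in, and it is the reason the clean constant $\tfrac12$ only kicks in for $q\ge12$. A safe way to organize this is: (i) reduce by symmetry to $w_1m_1$ having a component of absolute value $\ge1$ in, say, the first coordinate; (ii) pick $q\bmod4$ so the golden rectangle configuration's extreme piece in the corresponding direction is at distance exactly $F_{\lfloor q/2\rfloor}$ from the origin along $(1,0)$; (iii) bound the opposing contribution from the $(0,1)$-direction by $F_{\lfloor q/2\rfloor-1}\le\phi^{-1}F_{\lfloor q/2\rfloor}+O(1)$ and absorb it. The remaining steps — that the configuration is a vertex (already established in Section~\ref{sec:config3p}), that its denominator divides $D_q(\pP)$ (immediate from the definition of $D$), and the Fibonacci/golden-ratio asymptotics — are routine. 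Finally one notes that $D_q(\pP)$ may be strictly larger (other vertices, e.g.\ those from crossing points or twisted Fibonacci spirals), which only strengthens the bound; and if Conjecture~\ref{Cj:p=D} holds, the period inherits the same exponential growth.
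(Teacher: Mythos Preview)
Your overall strategy---use a golden parallelogram as a vertex configuration and bound its denominator via Fibonacci asymptotics---is the paper's, but you correctly identify and then fail to resolve the cancellation obstacle. Phrases like ``something $\ge 1$'', ``a cleaner bookkeeping'', and ``pick $q\bmod4$'' (you cannot pick $q$; it is given) are placeholders, not arguments, and your step (iii) of bounding the opposing contribution by $\phi^{-1}F_{\lfloor q/2\rfloor}+O(1)$ does not obviously leave a positive main term once the signs of the components of $w_1m_1,w_2m_2$ are adversarial.

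The paper sidesteps the cancellation issue with two observations you are missing. First, for a \emph{lower} bound on $\Delta(\bz)$ you do not need the full bounding box: any single coordinate difference between two pieces in the configuration is a lower bound. In the semiqueen golden rectangle both $\pP_1=(1,0)$ and $\pP_{4j}=(F_{2j},0)$ lie on the $x$-axis, so their difference is purely along $(1,0)$; under the linear map it becomes $(F_{2j}-1)\,w_1m_1$ with \emph{no} $w_2m_2$ contribution, hence no cancellation. Likewise $\pP_2$ and $\pP_{4j+2}$ lie on the $y$-axis and give a displacement purely along $w_2m_2$. Second, since $\pP$ has at least three moves, at most one is vertical and at most one horizontal, so you may \emph{choose} $m_1$ with $c_1\neq0$ and $m_2$ with $d_2\neq0$. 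Then the $x$-coordinate of $(F_{2j}-1)w_1m_1$ has absolute value at least $F_{2j}-1$, and similarly in the $y$-direction for the other pair. This handles $q\equiv0,1\pmod4$ via the first pair and $q\equiv2,3\pmod4$ via the second, giving $\Delta(\bz)\ge F_{\lfloor q/2\rfloor}-1$ uniformly; the elementary inequality $F_j-1>\tfrac12\phi^{j+1/2}$ for $j\ge6$ then yields the stated bound for $q\ge12$.
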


\begin{proof}
To prove the theorem it suffices to produce a vertex of $(\cube, \cA_\pP^q)$ with denominator exceeding $\phi^{q/2}$.  

First consider the semiqueen.  The points $\pQ^{21}_1$ and $\pQ^{21}_{4j}$ of the golden rectangle have coordinates $(1,0)$ and $(F_{2j},0)$.  Letting $q=4j$ or $4j+1$ gives an $x$-difference of $F_{2j}-1$ for a golden rectangle of $q$ pieces.  Similarly, letting $q=4j+2$ or $4j+3$ gives a $y$-difference of $F_{2j+1}$.  The golden rectangle is a vertex configuration so it follows by Lemma~\ref{L:DeltaN} that the vertex $\bz$ has $\Delta(\bz)\geq F_{\lfloor q/2 \rfloor}-1$.  A calculation shows that $F_{\lfloor j \rfloor}-1 > \frac12 \phi^{j+\frac12}$ for $j\geq6$.  The theorem for $\pQ^{21}$ follows.

An arbitrary piece with three (or more) moves has a golden parallelogram configuration formed from the golden rectangle by the linear transformation $(1,0)\mapsto w_1m_1$ and $(0,1)\mapsto w_2m_2$.  We may choose these moves from at least three, so we can select $m_1$ to have $c_1\neq0$ and $m_2$ to have $d_2\neq0$.  The displacement from $\pQ^{21}_1$ to $\pQ^{21}_{4j}$ becomes that from $\pP_1$ at $w_1m_1$ to $\pP_{4j}$ at $F_{2j}w_1m_1$.  This displacement is $(F_{2j}-1)(w_1c_1,w_1d_1)$.  Since $c_1 \neq 0$, the $x$-displacement is at least that for $\pQ^{21}$; therefore the denominator of the corresponding vertex for $\pP$ is bounded below by $F_{2j}-1$, just as it is for $\pQ^{21}$.  Similarly, the $y$-displacement for $q=4j+2$ is bounded below by $F_{2j+1}$.  This reduces the problem to the semiqueen, which is solved.
\end{proof}

We know from Proposition~\ref{P:Dincrease} that $D_q(\pP)$ is weakly increasing.  If, as we believe, the period equals $D_q(\pP)$, then the period increases at least exponentially for any piece with more than two moves.  

We think any board has a similar lower bound, say $C(\cB)\phi^{q/2}$ where $C(\cB)$ is a constant depending upon $\cB$, but we ran into technical difficulties trying to prove it.

\begin{exam}\label{ex:Q21parallelogram}
The semiqueen has (up to symmetry) only one other golden parallelogram besides the golden rectangle; it is shown in Figure~\ref{fig:q21fib2}.  It has a larger denominator than the golden rectangle configuration when $q$ is odd and $q\geq 7$. 
\begin{figure}[htbp]
\includegraphics[height=2.5in]{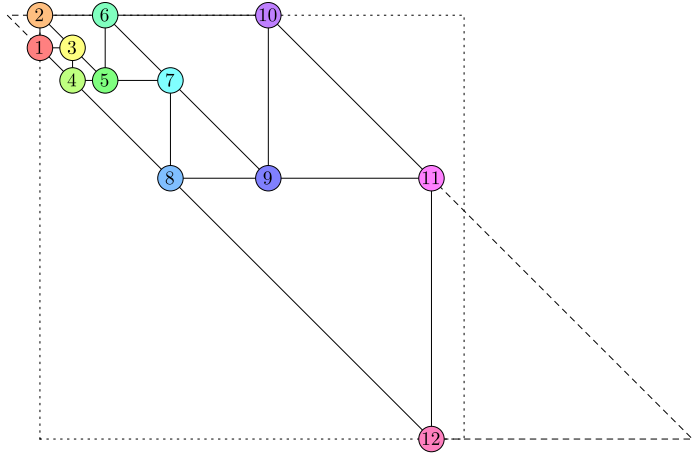}
\caption{A golden parallelogram configuration of $q$ semiqueens has the largest denominator when $q$ is odd.}
\label{fig:q21fib2}
\end{figure}
If we put $\pP_2$ in position $(0,0)$, then $\pP_1$ is in position $(0,-1)$ and for $i\geq 2$, $\pP_{i}$ is in position
\begin{align*}
(F_{\lfloor i/2\rfloor}-1,-F_{\lfloor i/2\rfloor}) &\qquad\text{if\/ } i\equiv 0 \bmod 4, \\
(F_{\lfloor i/2\rfloor+1}-1,-F_{\lfloor i/2\rfloor}) &\qquad\text{if\/ } i\equiv 1 \bmod 4, \\
(F_{\lfloor i/2\rfloor}-1,0) &\qquad\text{if\/ } i\equiv 2 \bmod 4, \\
(F_{\lfloor i/2\rfloor+1}-1,-F_{\lfloor i/2\rfloor-1}) &\qquad\text{if\/ } i\equiv 3 \bmod 4.
\end{align*}
Conjecture \ref{Cj:3movemax} specializes to:

\begin{conj}\label{conj:semiqueen}
The largest denominator of a vertex for $q$ semiqueens~$\pQ^{21}$ is $F_{\lfloor q/2\rfloor}$ if $q$ is even and $F_{\lfloor q/2\rfloor+1}-1$ if $q$ is odd.
\end{conj}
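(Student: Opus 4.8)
The plan is to prove matching lower and upper bounds, the lower bound being essentially already available. For even $q$ the golden rectangle configuration is a vertex: its $2q-3$ move equations are independent and the three fixations pin the smallest enclosing square, so by Lemma~\ref{L:DeltaN} its denominator is the side length of that square, which the coordinate formula shows to be $F_{\lfloor q/2\rfloor}$. For odd $q$ the golden parallelogram of Example~\ref{ex:Q21parallelogram} is a vertex in the same way, and its $x$-extent $F_{\lfloor q/2\rfloor+1}-1$ exceeds its $y$-extent $F_{\lfloor q/2\rfloor}$, so Lemma~\ref{L:DeltaN} gives denominator $F_{\lfloor q/2\rfloor+1}-1$. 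Thus the real content is the upper bound: every vertex $\bz$ of $(\cube,\cA_{\pQ^{21}}^q)$ has $\Delta(\bz)\le F_{\lfloor q/2\rfloor}$ when $q$ is even and $\Delta(\bz)\le F_{\lfloor q/2\rfloor+1}-1$ when $q$ is odd (for $q$ large enough that the formula is meaningful; the few small cases are checked by hand).

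For the upper bound I would first set up the combinatorial picture. A vertex is the unique solution of $2q$ independent equations drawn from the move equations $\cX_{ij}$ ($x_i=x_j$), $\cY_{ij}$ ($y_i=y_j$), $\cH^{-1/1}_{ij}$ ($x_i+y_i=x_j+y_j$) and the fixations $x_i,y_i\in\{0,1\}$. Form the move graph $G$ on $[q]$ whose edges are the move equations used. Every move equation is invariant under translating a whole connected component of $G$, so the move equations of a $k$-piece component have rank at most $2k-2$; since the $2q$ defining equations are independent and determine $\bz$, they split blockwise over the components, each $k$-piece component carrying exactly $2k$ equations, at least two of which are fixations. Consequently $\Delta(\bz)=\lcm$ of the component denominators, and each component is itself a vertex of the $\le q$-piece problem. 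Using the super-multiplicativity $G(a)G(b)\le G(q)$ for $a+b\le q$ (which holds because the values grow like $\phi^{q/2}$; the floors and the ``$-1$'' only help) together with induction, it suffices to treat a connected move graph, where $q>3$ forces exactly three fixations.

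With $G$ connected I would scale to integers—so the pieces become lattice points in $[0,N]^2$ with $N=\Delta(\bz)$, and each move edge now forces an integral displacement along one of the \emph{primitive} vectors $(1,0),(0,1),(-1,1)$—and argue by strong induction on $q$, aiming for the Fibonacci recursion $G(q)\le G(q-1)+G(q-2)$. The idea is to pick a piece $\pP_m$ realizing the maximum $x$-coordinate (which equals $N$ by minimality; extreme $y$- or $x+y$-coordinates give symmetric subcases), delete it, and show that after deleting or repairing at most one more equation the remaining configuration is a vertex of the $(q-1)$- or $(q-2)$-piece problem. The two deleted pieces govern the last two ``steps'' of the chain through $\pP_m$, and staying inside $[0,N]^2$ forces the new extreme coordinate to be at most the sum of those two step-lengths—exactly the Fibonacci step. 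Tracking which of the three fixations can be orphaned by the deletion, and the sign/orientation of the move incident to $\pP_m$, is what produces the even/odd split between $F_{\lfloor q/2\rfloor}$ and $F_{\lfloor q/2\rfloor+1}-1$.

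The main obstacle is precisely this inductive step: showing the ``sum of the previous two steps'' bound holds for \emph{every} topology of the connected move graph, not merely for a path. Cycles in $G$, repeated edges between a pair, two pieces sharing a fixation, and the many sign/slope patterns (the excerpt already notes that an explicit golden-parallelogram denominator needs on the order of $128$ cases) must all be handled, and in particular one must rule out that gluing several short rigid sub-chains, or exploiting a cycle to ``double'' a displacement, can beat a single Fibonacci chain. I expect this to require a structural classification of connected semiqueen vertex configurations—essentially the extension of the trajectory analysis of \cite{Arvind} to three moves and to interior crossing points—after which the Fibonacci recursion together with the golden rectangle/parallelogram lower bound closes the argument.
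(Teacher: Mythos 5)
The statement you are trying to prove is stated in the paper only as a conjecture (it is the specialization of Conjecture~\ref{Cj:3movemax} to the semiqueen); the paper supplies no proof, and in fact only the lower-bound half is established there, exactly as you describe: the golden rectangle configuration gives a vertex of denominator $F_{\lfloor q/2\rfloor}$, and for odd $q\geq 7$ the second golden parallelogram of Example~\ref{ex:Q21parallelogram} gives $F_{\lfloor q/2\rfloor+1}-1$. So your diagnosis of where the content lies is right, but your proposal does not close the problem: the entire upper bound is presented as a plan whose decisive step --- the structural classification of connected semiqueen vertex configurations and the verification that deleting an extreme piece always lands you back in a smaller vertex with a controlled loss of extent --- is explicitly deferred. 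That step is precisely what is open, so the proposal has a genuine gap rather than being an alternative proof.

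Beyond being incomplete, the quantitative shape of the proposed induction is off. The conjectured maximum $F_{\lfloor q/2\rfloor}$ grows like $\phi^{q/2}$, whereas the recursion you aim for, $G(q)\le G(q-1)+G(q-2)$, has solutions growing like $\phi^{q}$; worse, with the conjectured values one gets $G(q-1)+G(q-2)=F_{q/2+1}-1>F_{q/2}=G(q)$ for even $q$, so even if you proved that recursion it would not yield the claimed bound. The correct Fibonacci structure here advances one index per \emph{two} pieces (in the golden rectangle, consecutive steps come in equal pairs $F_{\lfloor i/2\rfloor-1}$), so any deletion argument must remove pieces two at a time, i.e.\ aim for something like $G(q)\le G(q-2)+G(q-4)$, and must in addition rule out the very configurations you flag (cycles in the move graph, multiple edges, shared fixations) from doing better. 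There is also a soft spot in the reduction to the connected case: you need $\lcm$ of component denominators, not their product, to be dominated by the connected maximum, and the inequality $F_{\lfloor a/2\rfloor}F_{\lfloor b/2\rfloor}\le F_{\lfloor (a+b)/2\rfloor}$ with the floors and the $-1$ terms for odd components needs to be checked rather than asserted. None of this is fatal to the strategy, but as written the argument proves neither the upper bound nor, strictly, the reduction to it.
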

\end{exam}

\begin{exam}\label{ex:Q12parallelograms}
The \emph{trident} $\pQ^{12}$ has move set $\M = \{(0,1), (1,1), (-1,1)\}$.  It gives the three distinct golden parallelogram configurations shown in Figure~\ref{fig:q12fib1}. 
\begin{figure}[htbp]
\includegraphics[height=2in]{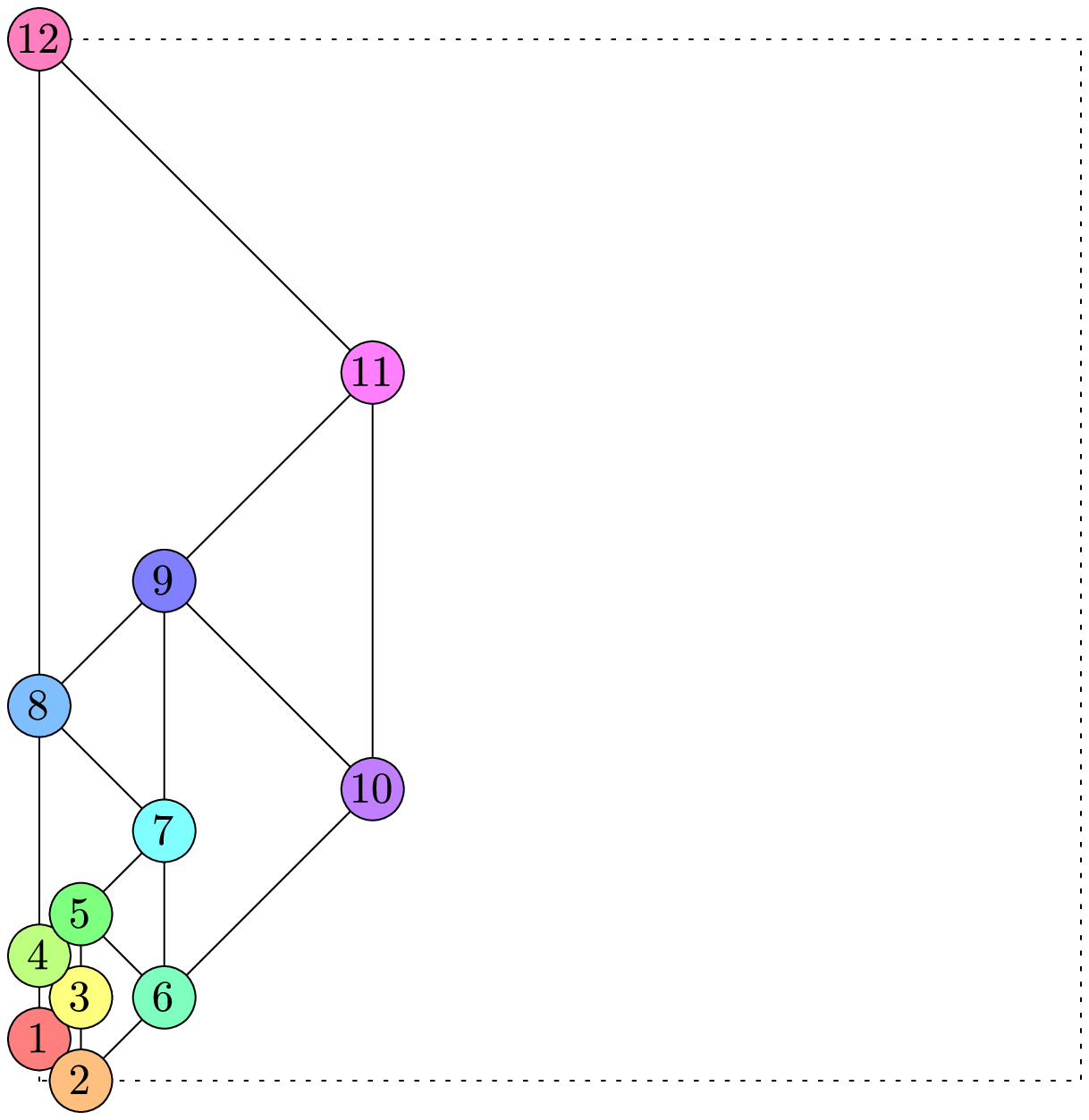}
\includegraphics[height=2in]{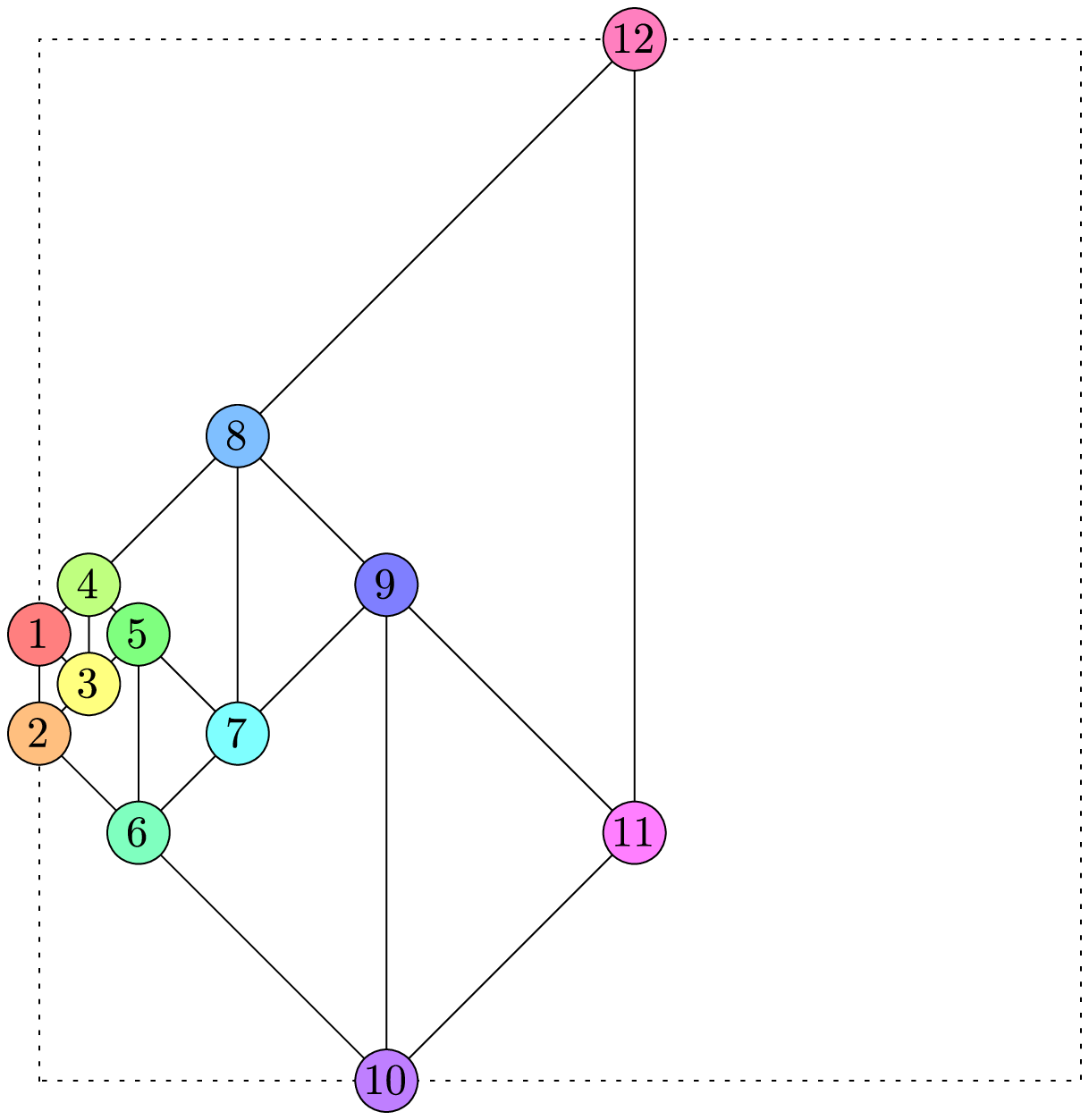}
\includegraphics[height=2in]{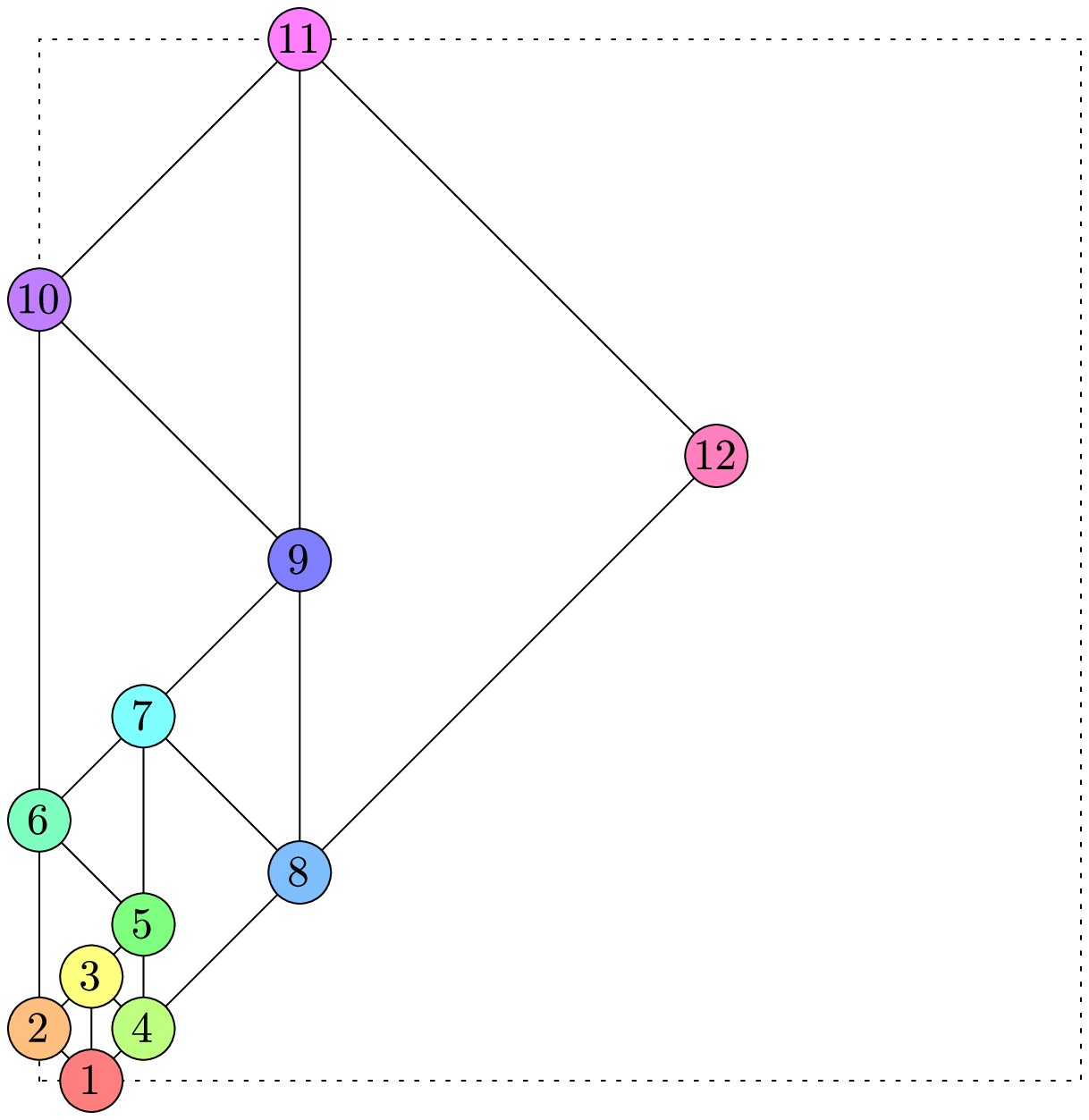}
\caption{The three golden parallelograms for the trident $\pQ^{12}$. For twelve pieces, the denominators are 25, 21, and 20, respectively.}
\label{fig:q12fib1}
\end{figure}
The piece positions for $i\geq 2$ for the configuration in Figure~\ref{fig:q12fib1}(a) are
\begin{align*}
(0,2F_{\lfloor i/2\rfloor}-1) &\qquad\text{if\/ } i\equiv 0 \bmod 4, \\
(F_{\lfloor i/2\rfloor-1},F_{\lfloor i/2\rfloor+2}-1) &\qquad\text{if\/ } i\equiv 1 \bmod 4, \\
(F_{\lfloor i/2\rfloor},F_{\lfloor i/2\rfloor}-1) &\qquad\text{if\/ } i\equiv 2 \bmod 4, \\
(F_{\lfloor i/2\rfloor},F_{\lfloor i/2\rfloor+1}+F_{\lfloor i/2\rfloor-1}-1) &\qquad\text{if\/ } i\equiv 3 \bmod 4, 
\end{align*}
so the largest denominator for such a configuration is 
\begin{align*}
2F_{\lfloor q/2\rfloor}-1 &\qquad\text{if }  q\equiv 0 \bmod 4, \\
F_{\lfloor q/2\rfloor+2}-1 &\qquad\text{if }  q\equiv 1 \bmod 4, \\
F_{\lfloor q/2\rfloor+1}-1 &\qquad\text{if }  q\equiv 2 \bmod 4, \\
F_{\lfloor q/2\rfloor+1}+F_{\lfloor q/2\rfloor-1}-1 &\qquad\text{if }  q\equiv 3 \bmod 4. 
\end{align*}
On the other hand, in Figure~\ref{fig:q12fib1}(c), the piece positions for $i\geq 2$ are
\begin{align*}
(F_{\lfloor i/2\rfloor},F_{\lfloor i/2\rfloor}-1) &\qquad\text{if\/ } i\equiv 0 \bmod 4, \\
(F_{\lfloor i/2\rfloor},F_{\lfloor i/2\rfloor+1}+F_{\lfloor i/2\rfloor-1}-1) &\qquad\text{if\/ } i\equiv 1 \bmod 4, \\
(0,2F_{\lfloor i/2\rfloor}-1) &\qquad\text{if\/ } i\equiv 2 \bmod 4, \\
(F_{\lfloor i/2\rfloor-1},F_{\lfloor i/2\rfloor+2}-1) &\qquad\text{if\/ } i\equiv 3 \bmod 4, 
\end{align*}
which yields a largest denominator of such a configuration of 
\begin{align*}
F_{\lfloor q/2\rfloor+1}-1 &\qquad\text{if }  q\equiv 0 \bmod 4, \\
F_{\lfloor q/2\rfloor+1}+F_{\lfloor q/2\rfloor-1}-1 &\qquad\text{if }  q\equiv 1 \bmod 4, \\
2F_{\lfloor q/2\rfloor}-1 &\qquad\text{if }  q\equiv 2 \bmod 4, \\
F_{\lfloor q/2\rfloor+2}-1 &\qquad\text{if }  q\equiv 3 \bmod 4. 
\end{align*}
Conjecture \ref{Cj:3movemax} specializes to:

\begin{conj} \label{conj:trident}
The largest denominator of a vertex for $q$ tridents~$\pQ^{12}$ is 
$2F_{q/2}-1$ if $q$ is even and $F_{(q+3)/2}-1$ if $q$ is odd.
\end{conj}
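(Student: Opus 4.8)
The plan is to separate Conjecture~\ref{conj:trident} into a lower bound---producing vertices with the asserted denominators---and a matching upper bound---showing that no vertex of $(\cube,\cA_{\pQ^{12}}^q)$ has a larger denominator. The lower bound is elementary. The configurations whose piece positions are listed for Figure~\ref{fig:q12fib1}(a) and~(c) are the golden parallelogram configurations of the trident, hence vertices by the construction of Section~\ref{sec:config3p}; each has integer coordinates, and among them appears a pair of consecutive integers $F_k$ and $F_k-1$, so the integral configuration cannot be rescaled and, by Lemma~\ref{L:DeltaN}, its denominator equals the side of its bounding square, which is its largest coordinate. Reading that largest coordinate off the position formulas and taking, in each residue class of $q$ modulo $4$, the larger of (a) and (c)---using $2F_k-1\ge F_{k+1}-1$ and $F_{k+2}-1\ge F_{k+1}+F_{k-1}-1$, and checking that the remaining golden parallelogram is dominated---yields exactly $2F_{q/2}-1$ for even $q$ and $F_{(q+3)/2}-1$ for odd $q$. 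So the real content is the upper bound; note that Theorem~\ref{T:exponential}, which already gives growth of order $\phi^{q/2}$, falls short of the exact constant.

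For the upper bound I would work in two stages. First, reduce to a single attack component: a vertex $\bz$ solves $2q$ independent equations formed from move equations $\cY_{ij},\cH^{1/1}_{ij},\cH^{-1/1}_{ij}$ and square-board fixations; the move equations partition $[q]$ into attack components, the coordinate blocks of distinct components are independent, each block is itself a vertex of the inside-out polytope for its own number of pieces, and $\Delta(\bz)$ is the least common multiple of the denominators of those sub-configurations. A short Fibonacci estimate---using $F_aF_b<F_{a+b}$, so that the $\lcm$ of the component denominators never exceeds the single-component bound for $q$ pieces, consistently with the monotonicity of Proposition~\ref{P:Dincrease}---then lets one assume all $q$ pieces lie in one attack component. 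Second, and this is the heart of the matter, bound the denominator of such a vertex by $2F_{q/2}-1$ (for even $q$) or $F_{(q+3)/2}-1$ (for odd $q$) by tracking how the bounding box grows as one walks along a spanning tree of the component: the three trident move equations, read as linear relations between coordinate differences of slopes $\infty$, $+1$, and $-1$, chain together in a Euclidean-algorithm fashion, and the configuration making the box grow fastest per step is the Fibonacci zigzag---exactly as all partial quotients equal to $1$ give the slowest Euclidean algorithm. A transfer-matrix estimate bounding the multiplicative effect of one extra piece or move equation on the denominator, together with a case analysis of which move type can follow which in a configuration that stays inside $[0,1]^2$, should yield the recursion $a_{i+1}\le a_i+a_{i-1}$ for the successive box dimensions; the factor of two (two pieces per Fibonacci step) and the $\pm1$ corrections in each class of $q$ modulo $4$ then come from the fixations and parities, and match the golden parallelograms.

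The main obstacle is the second stage. For a two-move rider, Theorem~\ref{T:Arvind} describes the vertex configurations completely in terms of corner trajectories, rigid cycles, and crossing points; for a three-move piece such as the trident there is no comparable description, because rigid cycles with interior points occur, the cycle structure of an attack component can be intricate, and it is precisely the interaction of the three move types that produces the Fibonacci behavior. Turning the heuristic that the Fibonacci zigzag is extremal into a rigorous bound---handling cycle edges, vertices with several fixations on a single piece, and pinning down the exact extremal value in each class of $q$ modulo $4$ rather than just its exponential growth rate---is exactly the step that has so far resisted us, which is why Conjecture~\ref{conj:trident}, like the general Conjecture~\ref{Cj:3movemax}, remains open.
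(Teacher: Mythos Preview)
The statement you are addressing is a \emph{conjecture} in the paper, not a theorem; the paper offers no proof. What the paper does provide is exactly the raw material for your lower bound: the explicit piece positions for the golden parallelogram configurations (a) and (c), together with the resulting denominators in each residue class of $q$ modulo $4$, presented as a specialization of the general Conjecture~\ref{Cj:3movemax}. Your lower-bound paragraph simply makes explicit the comparison between (a) and (c) across the four residue classes and reads off the maximum; this is consistent with the paper and is the easy direction. (One small point: you do not need to check that configuration (b) is dominated, since for the lower bound it suffices to \emph{exhibit} a vertex with the asserted denominator, which (a) or (c) already does in each case.)

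For the upper bound there is nothing in the paper to compare against: the paper does not claim it, does not sketch it, and explicitly leaves both Conjecture~\ref{Cj:3movemax} and this specialization open. Your two-stage outline---reduce to a single attack component, then bound the growth of the bounding box along a spanning tree by a Fibonacci recursion---is a plausible strategy, and you are candid that the second stage is where the difficulty lies. I would flag that even your first stage is not as clean as you suggest: the bound you need is on $2F_{q/2}-1$ (or $F_{(q+3)/2}-1$), not on $F_q$, and an $\lcm$ of sub-configuration denominators bounded by products via $F_aF_b<F_{a+b}$ does not obviously respect the halved index and the $2(\,\cdot\,)-1$ form; that reduction would itself need care. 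In any case, since the paper has no proof here, your proposal is best read as a program toward a result the paper does not establish, and your final paragraph accurately reflects its status.
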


There is a remarkable symmetry between the piece positions in configurations (a) and (c).  The position formulas for $i \equiv r \mod 4$ in (c) are identical to those for $i \equiv r-2 \mod 4$ in (a).  We do not know why.
\end{exam}

%=======
\sectionpage
\section{Pieces with Four or More Moves}\label{sec:4move}

When a piece has four or more moves, the diversity of vertex configurations increases dramatically and the denominators grow much more quickly.  Again we start with the piece with the simplest four moves, the queen.  

The {\em Fibonacci spiral} is an approximation to the golden spiral (the logarithmic spiral with growth factor $\phi$); it is obtained by arranging in a spiral pattern squares of Fibonacci side length, each with a quarter circle inscribed, as shown in Figure~\ref{fig:FibSpiral}(a).

\begin{figure}[htbp]
	\raisebox{.066in}{\includegraphics[height=1.4in]{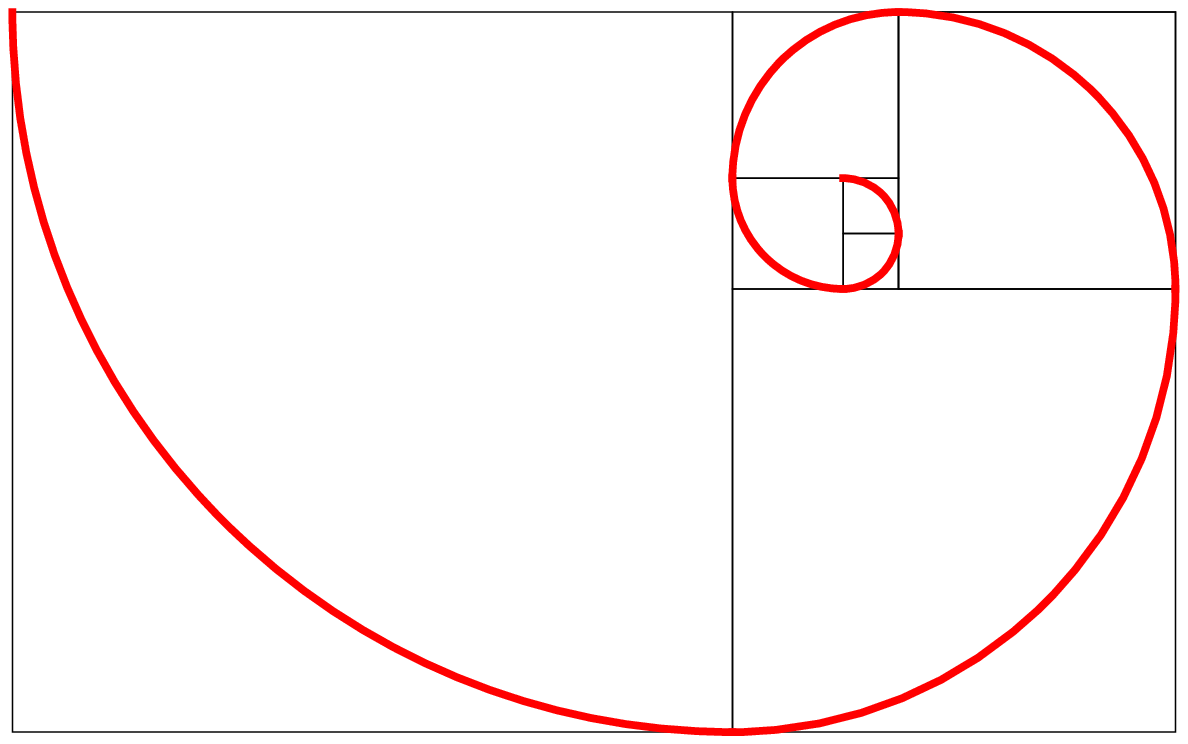}}
	\quad
		\includegraphics[height=2.25in]{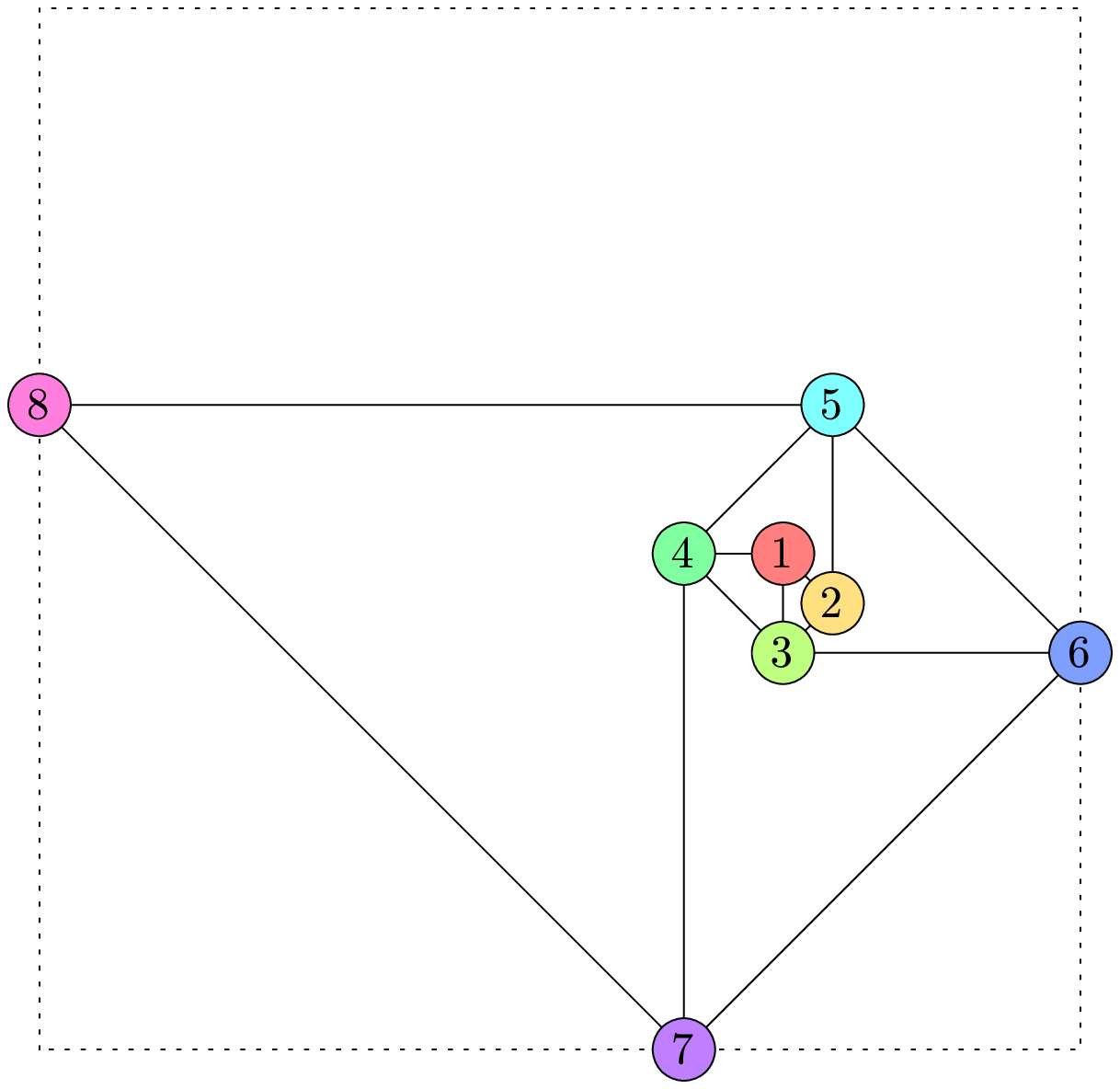}
\par	\quad (a) \hspace{2.1in} (b)
\caption{(a) The Fibonacci spiral. 	(b) The discrete Fibonacci spiral for eight queens.}
	\label{fig:FibSpiral}
\end{figure}

The {\em discrete Fibonacci spiral} with $q$ queens is defined by the move hyperplanes  
\[
\cH^{+1/1}_{2i,2i+1},\quad \cH^{-1/1}_{2i+1,2i+2},\quad \cX_{1,3},\quad  \cX_{2i,2i+3},\quad \cY_{2i+1,2i+4}
\] 
for all $i$ such that both indices fall between $1$ and $q$, inclusive, and fixations for pieces $\pP_{q-2}$, $\pP_{q-1}$, and $\pP_{q}$.  The fixations are
\begin{align*}
x_q=0,\ y_{q-1}=0,\ x_{q-2}=F_q &\qquad\text{if } q\equiv 0 \bmod 4, \\
x_q=0,\ y_{q}=0,\ x_{q-2}=F_q &\qquad\text{if } q\equiv 1 \bmod 4, \\
x_q=F_q,\ y_{q}=0,\ x_{q-2}=0 &\qquad\text{if } q\equiv 2 \bmod 4, \\
y_q=F_q,\ x_{q-1}=0,\ y_{q-2}=0 &\qquad\text{if } q\equiv 3 \bmod 4. \\
\end{align*}
Figure~\ref{fig:FibSpiral}(b) shows the discrete Fibonacci spiral of $8$ queens.

The bounding rectangle of the discrete Fibonacci spiral with $q$ queens has dimensions $F_q$ by $F_{q-1}$ so the vertex's denominator is $F_q$.

\begin{conj} 
The largest denominator that appears in any vertex configuration for $q$ queens is $F_q$.
\end{conj}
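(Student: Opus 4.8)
The lower bound $D_q(\pP)\ge F_q$ for the queen is already established by the discrete Fibonacci spiral together with Lemma~\ref{L:DeltaN}, so the content of the statement is the matching \emph{upper} bound: every vertex $\bz$ of $(\cube,\cA_\pP^q)$ satisfies $\Delta(\bz)\le F_q$.

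The plan is to decompose a vertex into connected pieces and bound each. Fix a vertex $\bz$ together with a defining system of $2q$ equations (move equations and fixations), and let $G$ be the graph on the piece-set $[q]$ in which $i\sim j$ whenever a move hyperplane $\cH^{d/c}_{ij}$ appears among those equations. A move equation never links two components of $G$ and a fixation involves a single piece, so the defining system is block-diagonal over the vertex sets of the components of $G$: if those have sizes $k_1,\dots,k_r$ (so $\sum_t k_t=q$), then each restriction $\bz|_{B_t}$ is a vertex of the inside-out polytope for $k_t$ queens whose move graph is connected, and $\Delta(\bz)=\lcm_t\Delta(\bz|_{B_t})$. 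Granting the \emph{key bound} $\Delta(\bz|_{B_t})\le F_{k_t}$ for each $t$, we obtain $\Delta(\bz)=\lcm_t\Delta(\bz|_{B_t})\le\prod_t\Delta(\bz|_{B_t})\le\prod_t F_{k_t}\le F_{k_1+\cdots+k_r}=F_q$, the last inequality iterating $F_aF_b\le F_{a+b}$, which is immediate in this indexing from the identity $F_{a+b}=F_aF_b+F_{a-1}F_{b-1}$ (with $F_{-1}:=0$).

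Everything thus reduces to the key bound: \emph{if the move graph of a queen vertex $\bz$ on $k$ pieces is connected, then $\Delta(\bz)\le F_k$}, a bound that the discrete Fibonacci spiral shows to be sharp for each $k$. Here is how I would attack it. Scaling by $N:=\Delta(\bz)$, Lemma~\ref{L:DeltaN} yields an integral configuration $N\bz$, each of whose $q$ pieces lies in $[0,N]^2$, satisfying the move equations; it is then enough to bound $N$. Choose a spanning tree of the connected move graph; its $k-1$ edges are move equations $z_i-z_j=t_{ij}v_{ij}$ with $v_{ij}$ one of the four primitive queen directions $(1,0),(0,1),(1,1),(1,-1)$, and together with a root piece's position and the signed lengths $t_{ij}$ they parametrize the $(k+1)$-dimensional flat that the remaining $k+1$ equations (non-tree move equations and fixations) then pin down to $\bz$. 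Following a root-to-leaf path, one passes from each step vector to the next via one of finitely many integer ``transition matrices'' indexed by pairs of distinct queen directions, so the entries of $N\bz$ are bounded by operator norms of length-$k$ products of these matrices; the claim to prove is that keeping all $q$ pieces of $N\bz$ inside $[0,N]^2$ forces the largest such entry to be at most $F_k$, with equality attained exactly by the turning pattern of the spiral. A Stern--Brocot or continued-fraction comparison should make this precise: the spiral's successive turns correspond to partial quotients all equal to $1$, which is the pattern maximizing a denominator reachable within a bounded number of steps.

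The key bound is the whole difficulty --- and the reason the statement is still a conjecture --- and I expect three compounding obstacles. First, the move graph need not be a path or even a tree: non-tree move equations (cycles of $G$) genuinely constrain the shape of the configuration, and once a piece has four moves one must also handle the new, non-boundary rigid cycles of Figure~\ref{fig:threemoveNEW}(b) and the twisted Fibonacci spiral of Section~\ref{sec:4move}, ruling out using any of them to grow the bounding box faster than the plain spiral. Second, the interaction between the linear independence that makes $\bz$ a vertex and the requirement that each piece lie in $[0,1]^2$ is delicate: not every length-$k$ chain of queen steps is realizable as a vertex, and the realizable extremal ones must be identified (straight sub-chains merely consume pieces, so one expects the extremum to ``turn'' everywhere, but this needs proof rather than assumption). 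Third, converting the ``all partial quotients $1$'' heuristic into a rigorous extremal statement about products of the transition matrices --- several of which fail to commute --- calls for a genuine optimization, most plausibly an exchange or monovariant argument showing that any non-Fibonacci turning pattern can be modified locally so as to use more pieces without decreasing the denominator, thereby reducing to the spiral. Even the weaker estimate $\Delta(\bz)\le C\phi^{k}$ for an explicit constant $C$ --- the correct order of magnitude, as $F_k$ grows like $\phi^{k}/\sqrt5$ --- would already require controlling the first and third of these obstacles, and is the natural first target.
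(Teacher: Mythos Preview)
This statement is a \emph{conjecture} in the paper; no proof is given.  The paper supplies only the lower-bound half --- the discrete Fibonacci spiral exhibits a single vertex of denominator exactly $F_q$ --- and leaves the upper bound $\max_\bz\Delta(\bz)\le F_q$ open.  There is therefore no paper argument to compare your proposal against; what you have written is, as you yourself acknowledge, a program rather than a proof.

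That said, the portion of your argument that is actually complete is correct and worth recording.  The reduction to connected move-graph components is sound: once a defining system of $2q$ linearly independent equations is chosen, it is block-diagonal over the components of the associated move graph, each block of size $k_t$ contributes exactly $2k_t$ equations and so determines a genuine vertex on $k_t$ pieces, and $\Delta(\bz)=\lcm_t\Delta(\bz|_{B_t})$.  The Fibonacci product inequality $\prod_t F_{k_t}\le F_{\sum_t k_t}$, via the identity $F_{a+b}=F_aF_b+F_{a-1}F_{b-1}$ in the paper's indexing, is also valid.  One cosmetic slip: you open with ``$D_q(\pP)\ge F_q$'', but the conjecture concerns $\max_\bz\Delta(\bz)$, not $D_q(\pP)=\lcm_\bz\Delta(\bz)$; these are different quantities (indeed Conjecture~\ref{denomQ} predicts $D_q(\pP)=\lcm[F_q]\gg F_q$).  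Since the spiral furnishes a vertex with $\Delta=F_q$, both lower bounds hold, and your upper-bound discussion is correctly aimed at the maximum, so nothing is damaged.

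Everything therefore rests on your ``key bound'' $\Delta(\bz)\le F_k$ for a vertex with connected move graph on $k$ pieces, and here the proposal remains a sketch.  The spanning-tree parametrization and the transition-matrix picture are reasonable heuristics, but the three obstacles you list --- cycles in the move graph (including the non-boundary rigid cycles that appear once four moves are available), the realizability constraint imposed by the box and the fixations, and the extremal problem for noncommuting products --- are not addressed beyond the level of intuition.  You have isolated the hard core of the conjecture cleanly and reduced it to the connected case; that is progress, but the key bound is exactly why the statement is still labelled a conjecture, and your write-up does not change that status.
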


The queen appears to have an extremely special property, not shared with three-piece riders nor with other four-piece riders.  The initial data (for $q\leq 9$) seem to indicate that it is possible to construct vertex configurations that generate {\em all} denominators up to $F_q$.

\begin{conj}\label{conj:denomq}
For every integer $\delta$ between $1$ and $F_q$ inclusive, there exists a vertex configuration of $q$ queens with denominator $\delta$.
\end{conj}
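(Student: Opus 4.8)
The plan is to produce, for every target $\delta\in\{1,\dots,F_q\}$, an explicit \emph{primitive} integral configuration of $q$ queens that is a vertex of $(\cube,\cA_\pP^q)$ with denominator exactly $\delta$. The key reduction, via Lemma~\ref{L:DeltaN}, is the following. Suppose $\bz'=(z_1,\dots,z_q)\in\bbZ^{2q}$ has coordinate greatest common divisor $1$, lies in $[0,\delta]\times[0,\delta]$ with horizontal extent exactly $\delta$, and satisfies $2q-3$ queen move equations (of slopes $0,\infty,\pm1$) which, together with the three fixations $x_1=0$, $x_q=\delta$ and $y_j=0$ for an appropriate $j$, form a system of rank $2q$. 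The queen move equations are invariant under the three-dimensional group of homotheties $(x,y)\mapsto(\lambda x+a,\lambda y+b)$, so the $2q-3$ of them cut out precisely the homothety orbit of $\bz'$, and the three fixations cut that orbit down to the single point $\tfrac1\delta\bz'$, which is therefore a vertex of $(\cube,\cA_\pP^q)$. By Lemma~\ref{L:DeltaN} its denominator is the least $N$ for which $\tfrac N\delta\bz'$ is integral with a piece on $x=N$; since $\bz'$ is primitive this forces $N=\delta$. Thus the whole problem reduces to constructing, on exactly $q$ pieces, a rigid primitive integral queen configuration of prescribed horizontal extent $\delta$ (and vertical extent $\le\delta$; if convenient the roles of the two axes may be swapped by a diagonal reflection, which preserves the queen move set).

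First I would dispose of all $\delta\le F_{q-1}$ by induction on $q$. Given a vertex configuration of $q-1$ queens with denominator $\delta$, rescale it to its primitive integral form inside $[0,\delta]\times[0,h]$, $h\le\delta$, choose pieces $\pP_j,\pP_k$ with $x_j\neq x_k$ (they exist, since the box has width $\delta\ge1$), and adjoin a new piece $\pP_q$ at the lattice point $(x_j,y_k)$ by imposing the move hyperplanes $\cX_{q,j}$ and $\cY_{q,k}$. The new coordinates appear in no earlier equation, so the rank goes up by exactly $2$, to $2q$, while the bounding box, primitivity and hence denominator are unchanged (compatibly with Proposition~\ref{P:Dincrease}). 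So if every $\delta\le F_{q-1}$ is achieved for $q-1$ pieces, every such $\delta$ is achieved for $q$ pieces as well. The Fibonacci values themselves are handled the same way: for $\delta=F_k$ with $k\le q$, take the discrete Fibonacci spiral on $k$ queens (whose denominator is $F_k$, by the bounding-rectangle computation preceding Conjecture~\ref{conj:denomq}) and pad it to $q$ pieces. After this inductive setup, and after checking the base cases $q\le q_0$ directly, only the non-Fibonacci values in the interval $(F_{q-1},F_q)$ remain to be realized by a fresh $q$-piece construction.

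For those I would use \emph{generalized Fibonacci spirals}: rigid chains $z_1,\dots,z_q$ whose consecutive attacks cycle through the queen directions $(1,1),(0,1),(-1,1),(1,0)$ as in the discrete spiral, but whose arm lengths realize an arbitrary composition $(a_1,\dots,a_m)$ of positive integers, so that the step multipliers obey the continuant recursion $f_i=a_if_{i-1}+f_{i-2}$ and the horizontal extent of the chain is the continuant $K(a_1,\dots,a_m)$; the chain is rigidified, exactly as the ordinary spiral is, by the ``same row'' and ``same column'' relations $\cX$, $\cY$ between pieces a fixed distance apart, suitably adapted at each turn. The number-theoretic lemma to establish is that the integers arising as $K(a_1,\dots,a_m)$ with total cost at most $q$ (the cost being $\sum a_i$ together with a bounded overhead for the turns and the rigidifying relations) are precisely $\{1,\dots,F_q\}$; this should follow from the continuant recursion together with the facts that $K(1,\dots,1)=F_q$ and that every positive rational has a finite continued-fraction expansion, the extreme case being the all-ones composition.

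The step I expect to be the main obstacle is making this third construction simultaneously \emph{tight} and \emph{rigid}. One must (i) pin the piece count to exactly $q$, whereas the continuant/continued-fraction bookkeeping naturally carries slack; (ii) prove that the $2q-3$ move equations of a generalized spiral stay linearly independent through every turn, so that the configuration is a genuine $0$-dimensional vertex and not a positive-dimensional face (the paper already notes, for rigid cycles, that linear independence of the attack and fixation equations is exactly the point that must be checked); (iii) guarantee that the horizontal extent is \emph{exactly} $\delta$ and the vertical extent at most $\delta$, so the rescaled configuration really lies in $\cube$; and (iv) guarantee primitivity of the integral coordinates, so that no proper divisor of $\delta$ also serves. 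Items (i) and (ii) interact most awkwardly in the near-maximal band $F_{q-1}<\delta\le F_q$, where there is essentially no slack and the construction must be a controlled perturbation of the maximal Fibonacci spiral rather than a modular assembly of sub-spirals. Since the statement is an exactness claim---one unreachable $\delta$ would refute it---I regard the design of a uniform family of vertex configurations covering every value, together with the proof that it stays rigid, as the genuinely open core of the problem.
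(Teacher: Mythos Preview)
The statement you are attempting is labeled a \emph{Conjecture} in the paper; the paper offers no proof.  Its entire support there is empirical: the remark that the initial data for $q\le 9$ are consistent with it, together with Example~\ref{ex:q8configs}, which exhibits explicit $8$-queen vertex configurations with denominators $14,\dots,20$ (the values $\le 13=F_7$ being inherited from smaller $q$, and $21=F_8$ from the discrete Fibonacci spiral).  So there is no ``paper's own proof'' to compare your attempt against; you are proposing a strategy for an open problem.

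On the merits of your strategy: your reduction via Lemma~\ref{L:DeltaN} is correct, as is the observation that the queen move hyperplanes are invariant under the three-parameter homothety group, so that $2q-3$ independent move equations plus three fixations pin down a vertex.  Your inductive padding step---adjoining a new piece at $(x_j,y_k)$ via $\cX_{q,j}$ and $\cY_{q,k}$---is sound and is exactly the mechanism implicit in Proposition~\ref{P:Dincrease} and in the paper's remark that denominators $\le F_{q-1}$ are inherited from fewer pieces.  So the genuinely new content required is, as you say, the band $F_{q-1}<\delta<F_q$.

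Your proposed attack on that band via continuants has a real gap, and it is not merely a matter of bookkeeping.  The claim that the set of continuants $K(a_1,\dots,a_m)$ with bounded ``total cost'' is exactly an initial interval $\{1,\dots,F_q\}$ is false as stated: already for small $q$ one checks that certain values (e.g.\ $6$ with cost $\le 5$, under the natural cost $\sum a_i$) are not hit by any continuant, so some other construction must supply them.  The paper's Example~\ref{ex:q8configs} shows what actually happens in practice: the configurations realizing $14,\dots,20$ for $q=8$ are \emph{ad hoc} perturbations of the spiral, not members of a single continuant-indexed family.  Your items (i)--(iv) correctly identify the obstructions; the point to take away is that the continuant lemma you would need is not available, so the ``genuinely open core'' you flag at the end is indeed open, and your proposal is a plan of attack rather than a proof.
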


\begin{exam}\label{ex:q8configs} 
The eighth Fibonacci number is $F_8=21$.  The spiral in Figure~\ref{fig:FibSpiral}(b) exhibits a denominator of $21$.  For each $\delta\leq F_7=13$ there is a vertex configuration of seven or fewer queens with denominator $\delta$ (we do not show them).  Figure~\ref{fig:q8configs} provides seven vertex configurations of eight queens in which the denominator ranges from 14 to 20, as one can tell from the size of the smallest enclosing square and Lemma~\ref{L:DeltaN}.
\begin{figure}[htbp]
\includegraphics[height=1.5in]{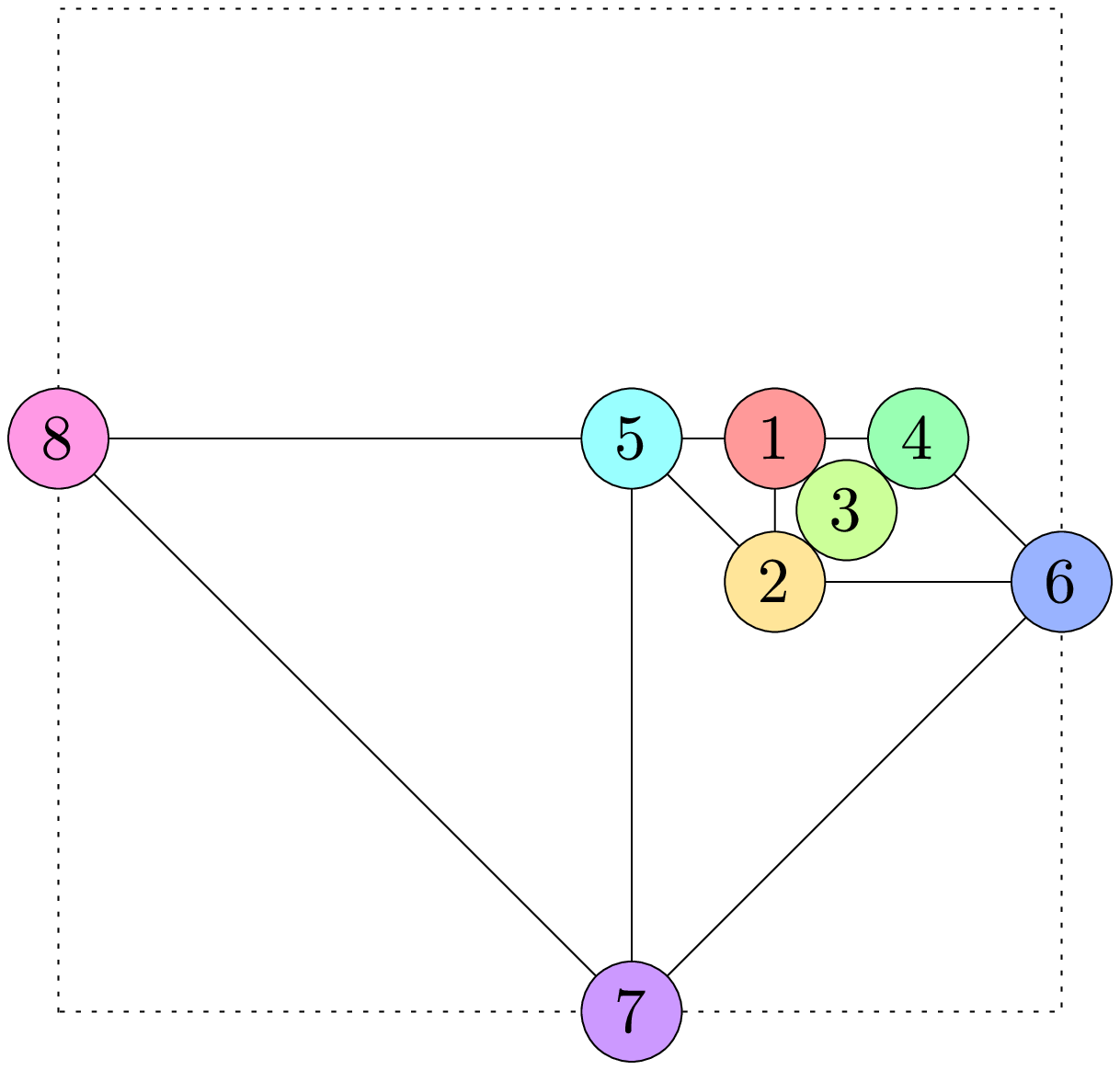}
\includegraphics[height=1.5in]{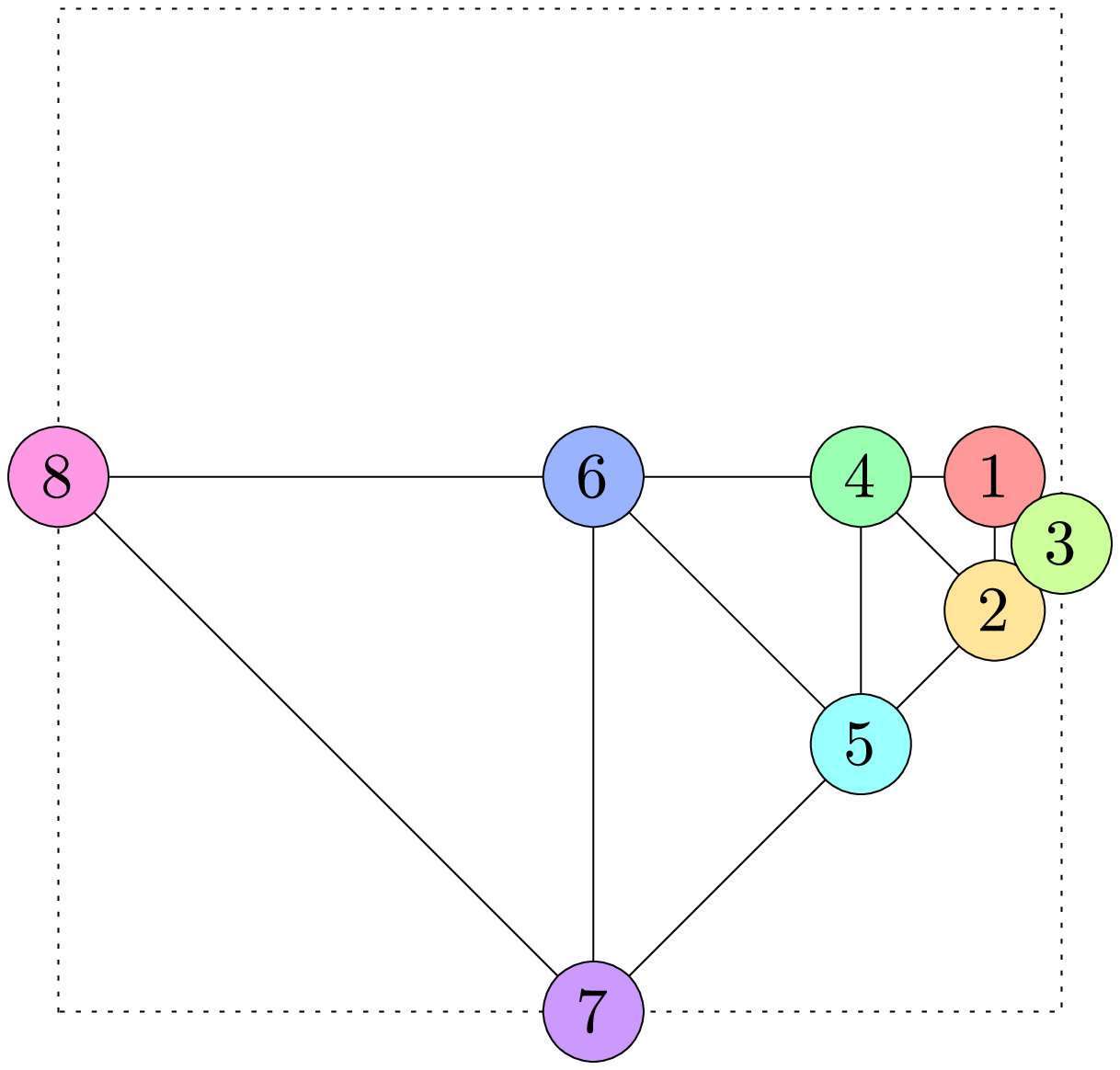}
\includegraphics[height=1.5in]{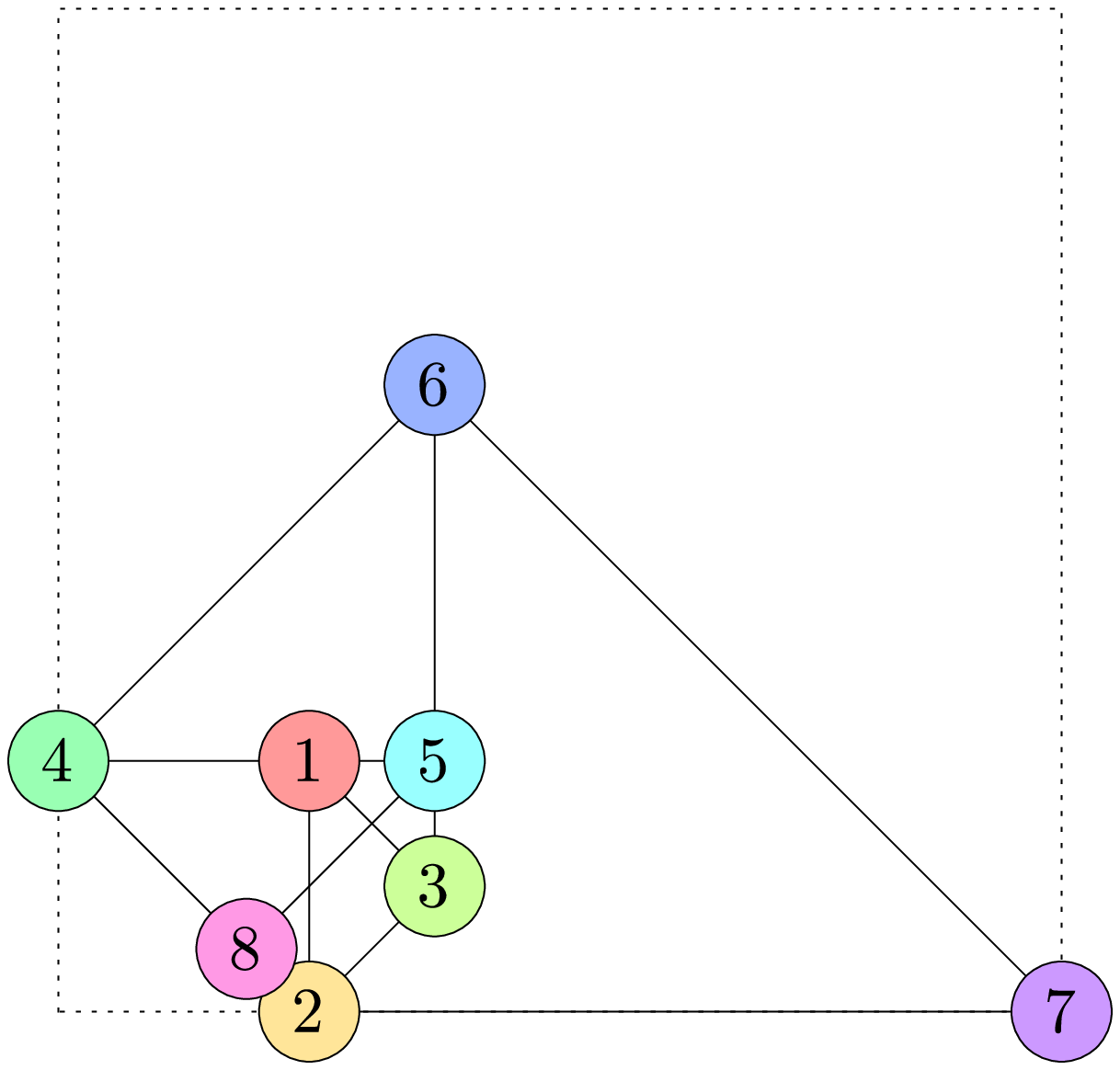}
\includegraphics[height=1.5in]{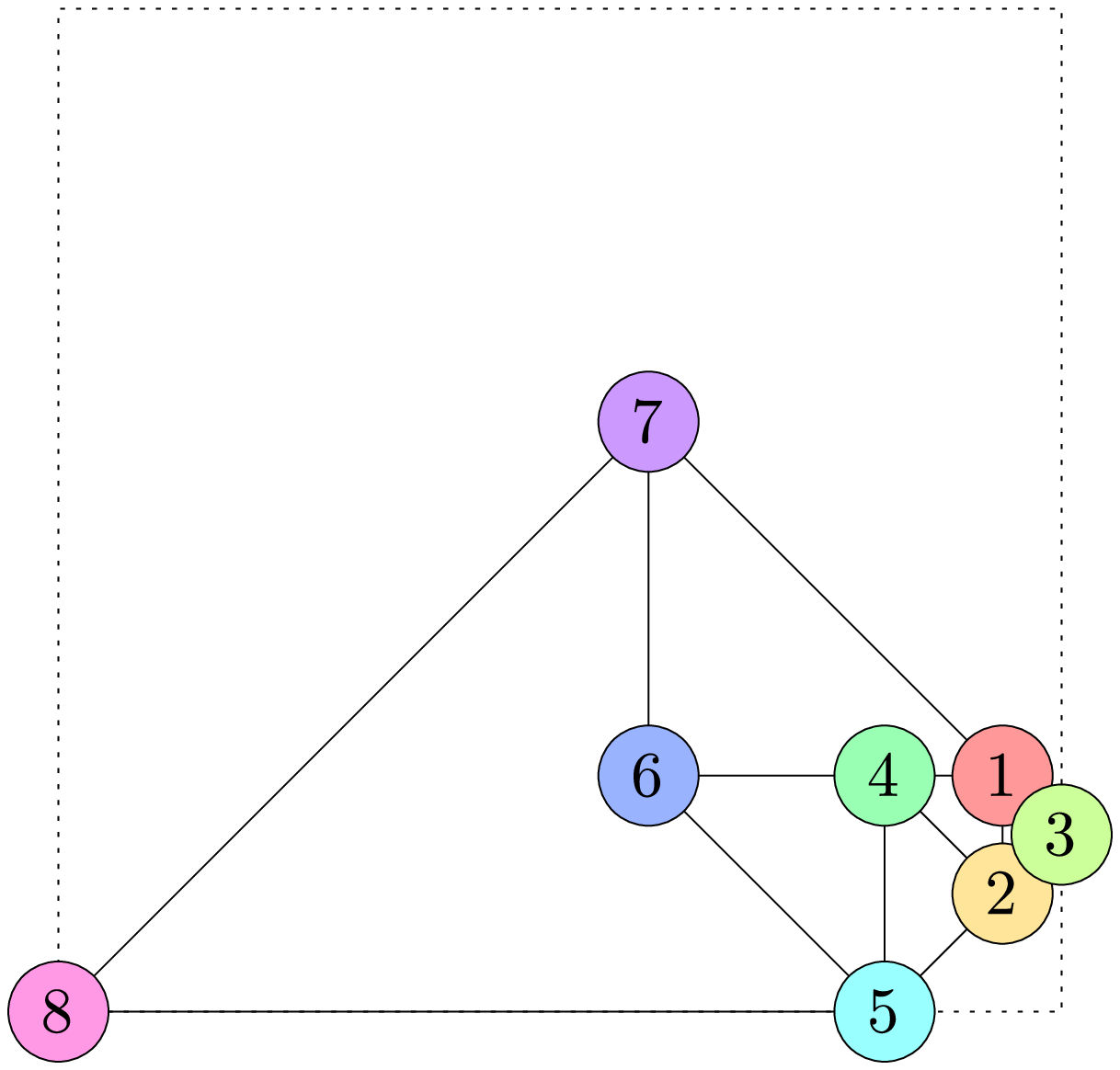}
\includegraphics[height=1.5in]{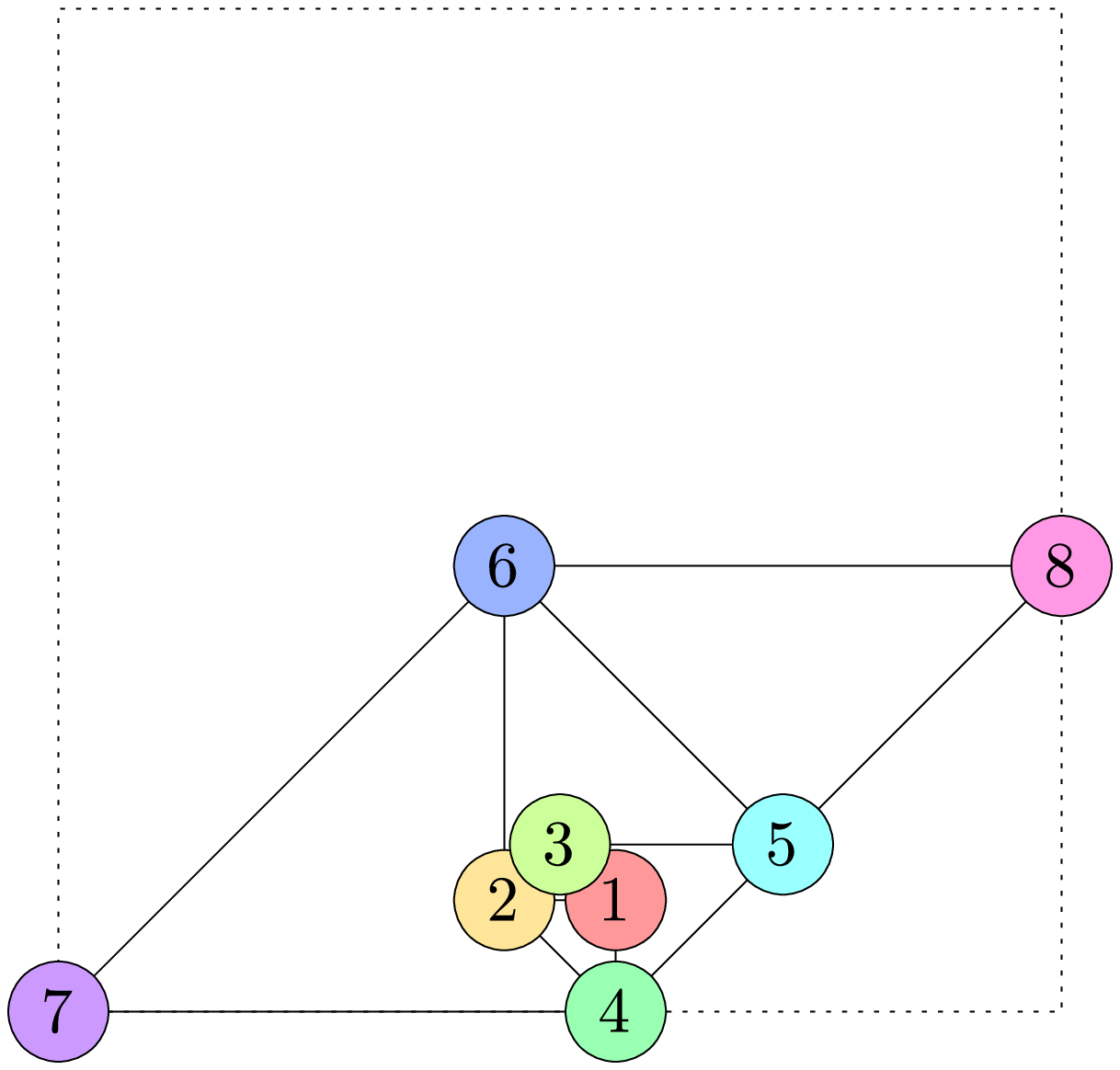}
\includegraphics[height=1.5in]{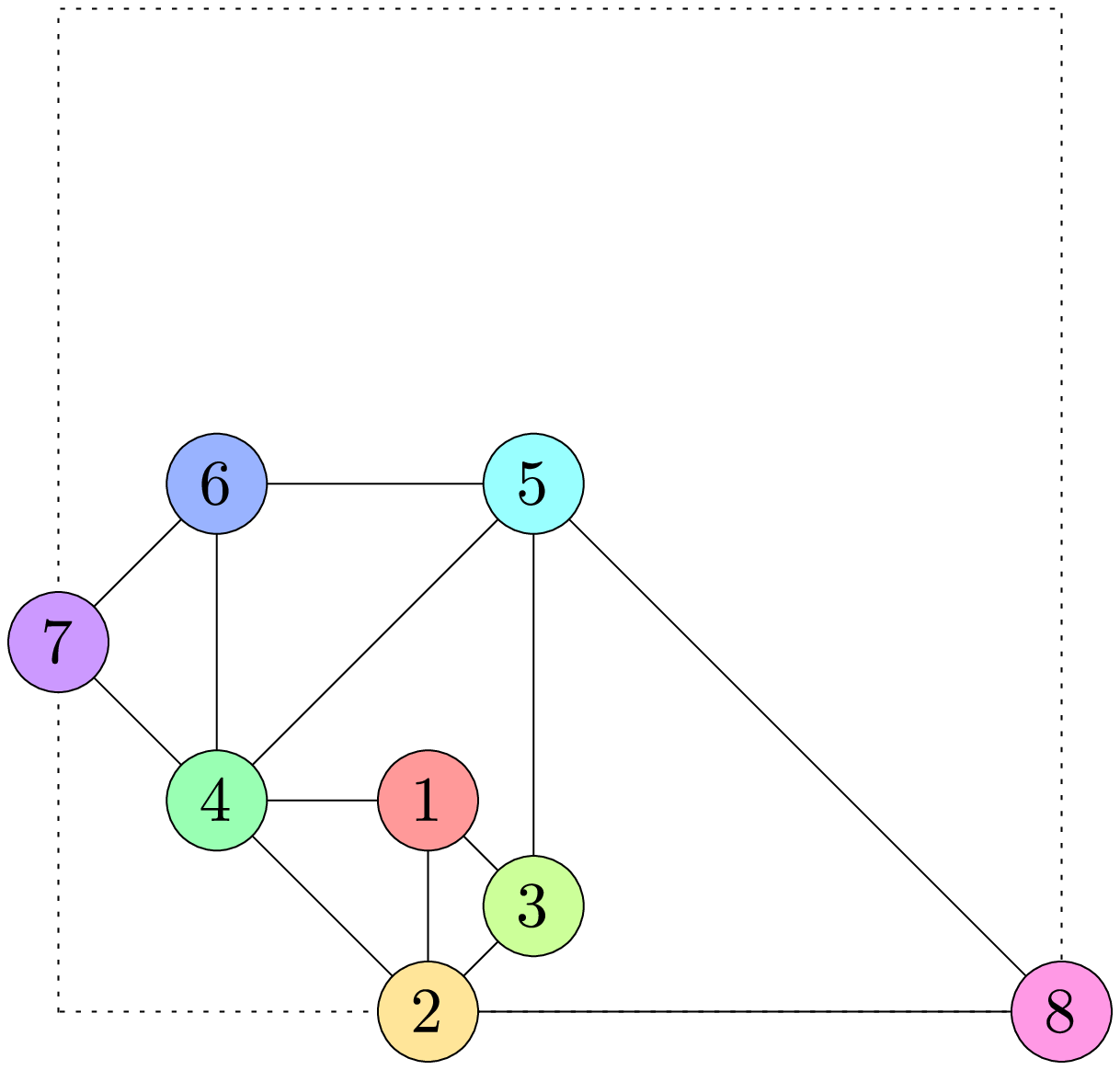}
\includegraphics[height=1.5in]{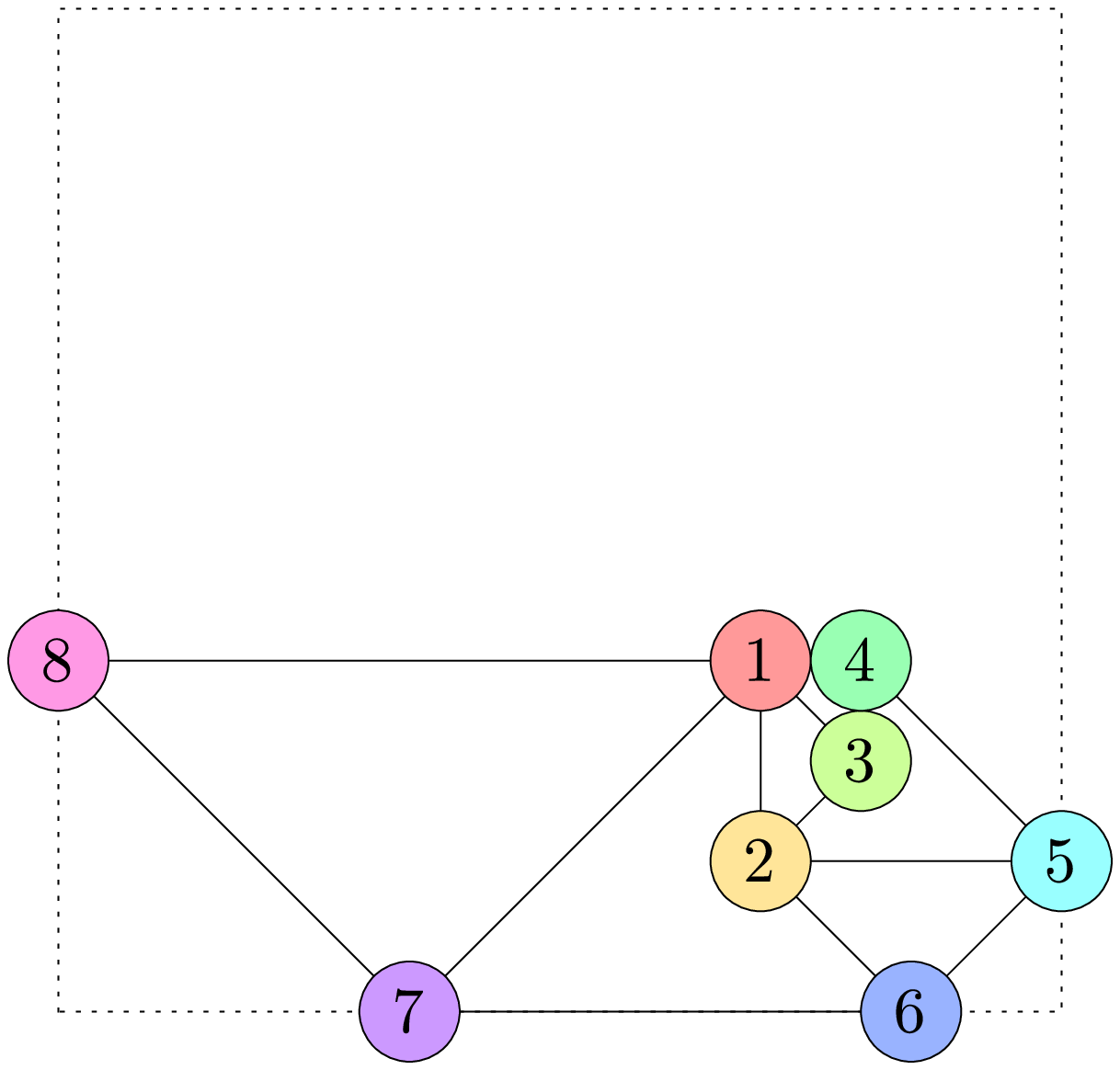}
\caption{Vertex configurations of eight queens with denominators 14 through 20.}
\label{fig:q8configs}
\end{figure}
\end{exam}

\Kot\ \cite[Section 1.1; p.\ 31 in 6th ed.]{ChMath} suggests a period expressed in terms of Fibonacci numbers.  

\begin{conj}[\Kot]\label{KotFib}
The counting quasipolynomial for $q$ queens has period $\lcm[F_q]$, the least common multiple of all positive integers up through the $q$th Fibonacci number $F_q$.
\end{conj}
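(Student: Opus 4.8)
The plan is to prove \Kot's conjecture (Conjecture~\ref{KotFib}) by combining two statements about vertex denominators of the inside-out polytope $(\cube,\cA_\pP^q)$ for the queen with Conjecture~\ref{Cj:p=D}, reducing everything to the identity $D_q(\text{queen})=D(\cube,\cA_\pP^q)=\lcm[F_q]$. One half of this needs nothing but an upper bound: because in Ehrhart theory the period always divides the denominator, the direction ``period $\mid\lcm[F_q]$'' follows from the single estimate that every vertex $\bz$ of $(\cube,\cA_\pP^q)$ satisfies $\Delta(\bz)\le F_q$ --- then $\Delta(\bz)$ is one of $1,\dots,F_q$, hence divides $\lcm(1,\dots,F_q)=\lcm[F_q]$, so $D_q(\text{queen})\mid\lcm[F_q]$. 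For the reverse divisibility, ``$\lcm[F_q]\mid$ period,'' one invokes Conjecture~\ref{Cj:p=D} (there seems to be no way to avoid it, short of a direct argument that the constituents of $u_\pP(q;n)$ really do vary with that period); granting it, the reverse divisibility reduces to $\lcm[F_q]\mid D_q(\text{queen})$, and since $D_q(\text{queen})$ is the least common multiple of all vertex denominators, this follows once every $\delta\in\{1,\dots,F_q\}$ is realized as some $\Delta(\bz)$.

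So the substantive work splits into a \emph{realizability} statement (Conjecture~\ref{conj:denomq}) and an \emph{upper bound} $\Delta(\bz)\le F_q$. For realizability I would construct, uniformly in $\delta$ and $q$, a vertex of denominator exactly $\delta$ by deforming the discrete Fibonacci spiral of $q$ queens (whose smallest enclosing square has side $F_q$): replace selected diagonal move hyperplanes $\cH^{\pm1/1}_{ij}$ by axis-parallel ones $\cX_{ij}$ or $\cY_{ij}$, and/or relocate the three boundary fixations, so that the smallest enclosing square has side exactly $\delta$. Two things must be checked: (i) that the recipe does produce an enclosing box of side $\delta$, a Fibonacci-arithmetic computation; and (ii) that the resulting system of $2q$ move-equation and fixation rows is linearly independent, so that by the definition of a vertex and Lemma~\ref{L:DeltaN} one genuinely gets a vertex whose denominator equals that side length. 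The seven explicit configurations of Example~\ref{ex:q8configs} and the Fibonacci-valued families of Section~\ref{sec:config3p} are templates indicating such a construction exists; packaging it into one statement is laborious but not conceptually hard.

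The upper bound is the crux. Fix a vertex $\bz$; by Lemma~\ref{L:DeltaN}, $\Delta(\bz)$ is the least $N$ admitting an integral configuration $N\bz$ in $[0,N]^2$ obeying the chosen queen-move equations ($x_i=x_j$, $y_i=y_j$, or $x_i\pm y_i=x_j\pm y_j$ with the same sign on both sides) together with fixations placing certain coordinates at $0$ or $N$, $2q$ equations in all. I would order the pieces so that each is attached to the earlier ones by move equations, and track how the bounding box of the partial configuration grows: a diagonal queen move forces a new coordinate to equal $\pm x_j\pm y_j$ for already-placed pieces, so a newly created extreme coordinate is at worst the sum of two previously occurring coordinate differences. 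This yields a Fibonacci-type recursion for the box dimensions, with the spiral realizing the extremal growth, and one must squeeze it to land at exactly $F_q$ rather than a softer $O(\phi^{q})$ bound --- Theorem~\ref{T:exponential} already gives only the much weaker $\tfrac12\phi^{q/2}$ for general pieces, so the queen-specific sharpening is the new content. The \emph{main obstacle} is making this extremal analysis rigorous over all vertex types: classify the local attachment patterns (attachment by two move equations, the growth driver, versus by one move equation plus a fixation), handle the three or four ``free'' fixations uniformly, and --- most delicately --- rule out that some configuration whose pieces are not all on the boundary beats the Fibonacci rate. The extremal queen configurations (the spirals) themselves have interior pieces, so unlike the two-move case there is no trajectory structure (no analogue of Theorem~\ref{T:Arvind}) reducing the bound to boundary trajectories and their crossings, and the new interior rigid cycles appearing once a piece has three or more moves (Section~\ref{sec:moremoves}) must be controlled directly. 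A first attempt via a Cramer/determinant estimate on the $2q\times2q$ coefficient matrix (entries in $\{-1,0,1\}$, at most four per row) is the natural starting point but is almost certainly too crude, so the carefully bounded geometric recursion seems indispensable. I would also note that even a complete proof that $D_q(\text{queen})=\lcm[F_q]$ leaves \Kot's conjecture conditional on Conjecture~\ref{Cj:p=D}: an unconditional proof would, for each prime power $p^a\le F_q$, have to exhibit an explicit residue class of $n$ on which two constituents of $u_\pP(q;n)$ actually differ, which appears beyond current methods. The realistic target is therefore the conditional statement, with essentially all the difficulty carried by the sharp Fibonacci upper bound.
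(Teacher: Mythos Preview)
The statement you are addressing is not proved in the paper: it is stated there as an open conjecture (\Kot's Conjecture~\ref{KotFib}), with no proof and no claim of one. The paper explicitly remarks that even the weaker denominator analogue, Conjecture~\ref{denomQ}, remains unproved, and that the spiral constructions merely ``lend credence'' to \Kot's formula. So there is no paper proof to compare against.

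Your proposal is therefore not a proof but a research outline, and as such it is reasonable and honest. You correctly isolate the three ingredients that would be needed --- the sharp upper bound $\Delta(\bz)\le F_q$ for every vertex, the realizability of every $\delta\le F_q$ as a vertex denominator (Conjecture~\ref{conj:denomq}), and the period-equals-denominator principle (Conjecture~\ref{Cj:p=D}) --- and you correctly flag that the last of these is itself open, so that at best you obtain a conditional result. You also correctly identify the sharp Fibonacci upper bound as the crux and note that neither a crude Cramer estimate nor a two-move-style trajectory argument will suffice. All of this matches the paper's own assessment of the situation. What you should make explicit at the top is that you are not proving the conjecture but sketching a program toward it; as written, the opening sentence (``The plan is to prove \Kot's conjecture'') overstates what follows, since two of your three ingredients are themselves conjectures in the paper and the third (the upper bound) is nowhere established.
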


The appearance of Fibonacci numbers here was the motivation for our study of Fibonacci configurations.  
\Kot's conjectured periods up to $q=7$ (from \cite[6th ed., p.\ 31]{ChMath}, or see them in our Part V) agree with this proposal and the theory of this section lends credence to it.  
(We discuss this in more detail in Part V.)  
We hope that our theory can yield a proof of the following (weaker) denominator analog of Conjecture \ref{KotFib}:

\begin{conj}\label{denomQ}
The denominator of the inside-out polytope for $q$ queens is exactly $\lcm[F_q]$, where $[F_q]=\{1,2,\ldots,F_q\}$. 
\end{conj}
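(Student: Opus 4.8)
We would split the claimed identity $D(\cube,\cA_\pP)=\lcm[F_q]$ into the two divisibilities $D\mid\lcm[F_q]$ and $\lcm[F_q]\mid D$ and attack each separately. The first follows once we show (A) \emph{every vertex of $(\cube,\cA_\pP^q)$ has denominator at most $F_q$}, and the second follows once we show (B) \emph{for every $\delta$ with $1\le\delta\le F_q$ some vertex has denominator divisible by $\delta$}. Statement (B) is essentially Conjecture~\ref{conj:denomq} and is the more approachable half; statement (A) — equivalently, that the discrete Fibonacci spiral is extremal — is the crux.

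\emph{The lower bound (B).} The plan is to use Proposition~\ref{P:Dincrease}: since $D_s(\pP)\mid D_q(\pP)$ for $s\le q$, it is enough, for each $\delta$ with $2\le\delta\le F_q$, to build a vertex of $(\cube,\cA_\pP^s)$ of denominator exactly $\delta$ for some $s\le q$. First I would choose an integer $\delta'$ with $0<\delta'<\delta$, $\gcd(\delta,\delta')=1$, and $\delta/\delta'$ as close to $\phi$ as possible. The subtractive Euclidean algorithm tiles the $\delta\times\delta'$ rectangle by squares; realizing this tiling as a discrete spiral of queens — one queen per square, laid out as in the discrete Fibonacci spiral, with move hyperplanes of types $\cX$, $\cY$, $\cH^{+1/1}$, $\cH^{-1/1}$ and three boundary fixations on the terminal square — produces a point one checks is a genuine vertex of $(\cube,\cA_\pP^s)$, whose smallest enclosing square has side $\delta$ and whose coordinates have gcd $1$, so that its denominator is exactly $\delta$ by Lemma~\ref{L:DeltaN}. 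The one number-theoretic point to verify is that the number of squares, namely the sum of the partial quotients of $\delta/\delta'$, is maximal for numerator $\le F_q$ when $\delta/\delta'=F_q/F_{q-1}$, where it equals $q+O(1)$; taking $\delta'\approx\delta/\phi$ keeps $s\le q$, and any small overshoot can be absorbed by appending ``inert'' queens $\pP_j:=(x_{j-2},y_{j-1})$ pinned by $\cX_{j-2,j}$ and $\cY_{j-1,j}$, which raise the rank by two without changing the denominator. The recipe should reproduce the data of Example~\ref{ex:q8configs} for small $q$.

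\emph{The upper bound (A).} Here a vertex $\bz$ is the unique solution of a $2q\times 2q$ system $M\bz=b$ whose rows are move equations (normals with two or four entries $\pm1$, from the slopes $0,\infty,1,-1$) and fixations (a single $\pm1$ entry, with $b$-entry $0$ or $1$); by Lemma~\ref{L:DeltaN}, $\Delta(\bz)$ is the least $N$ for which the homogenized, $N$-scaled system has an integral solution $\mathbf w\in[0,N]^{2q}$, and that $N$ is the largest coordinate of the primitive integral representative. The plan is an induction on $q$ with hypothesis $\Delta(\bz)\le F_q$. I would organize a vertex by its \emph{move graph} $\Gamma$ on $[q]$, with an edge for each move hyperplane in the defining system; inside each connected component choose a spanning tree, whose move equations are independent, and solve for coordinates by propagating outward from the pieces pinned to $\partial\cube$: along each tree edge the slope dictates rewriting one coordinate of a piece as a $\pm1$-combination of coordinates of its neighbour ($x_i=x_j$, $y_i=y_j$, or $x_i\pm y_i=x_j\pm y_j$). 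A chain of such steps that cycles through the four slope types makes the coefficients obey the Fibonacci recurrence, so along a chain through $\ell$ pieces the coordinates, hence the least denominator clearing them, are bounded by $F_\ell\le F_q$; branch vertices of the tree, extra (cycle) edges of $\Gamma$, and repetitions of a slope merely impose further linear constraints or consume pieces without deepening the recurrence. Thus the extremal configuration is a single chain — precisely a golden-parallelogram/Fibonacci-spiral configuration — giving $\Delta(\bz)\le F_q$. The inductive bookkeeping is the claim that a leaf of such a chain, or a piece on $\partial\cube$, can be deleted together with its two defining equations to leave a vertex of $(\cube,\cA_\pP^{q-1})$ of denominator $\le F_{q-1}$, and that reinstating it multiplies the bound by at most $\phi$, i.e.\ carries $F_{q-1}$ to $F_q=F_{q-1}+F_{q-2}$.

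\emph{Main obstacle.} The hard part is (A), and within it the delicate point is not the Fibonacci estimate but the structural assertion that no vertex of $(\cube,\cA_\pP^q)$ can beat a spiral. Unlike the one- and two-move settings of Theorems~\ref{T:maxcd} and~\ref{T:Arvind}, queen vertices can have pieces in the interior of the board, and one must rule out that several such interior pieces combine to exceed the golden-ratio growth rate; one must also show that the ``delete a leaf, re-insert it'' reduction is always available, with the claimed factor-$\phi$ control, for \emph{every} vertex-defining system — including those whose move graph is highly connected. Establishing that reduction uniformly is the step we have not completed, and it is where we expect the real work to lie.
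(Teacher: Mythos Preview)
The statement you are attempting to prove is a \emph{conjecture} in the paper; the authors give no proof and explicitly write only that they ``hope that our theory can yield a proof'' of it.  There is therefore nothing in the paper to compare your argument against.  Your decomposition into (A) ``every vertex denominator is at most $F_q$'' and (B) ``every $\delta\le F_q$ is a vertex denominator'' is correct and natural, but note that (A) is precisely the unlabelled conjecture stated just before Conjecture~\ref{conj:denomq}, and (B) is Conjecture~\ref{conj:denomq} itself.  So your reduction simply trades one open conjecture for the two open conjectures that immediately precede it in the paper; it does not advance beyond what the authors already identified as the obstacles.

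On the substance of your sketches: the Euclidean-tiling idea for (B) is attractive and in the spirit of Example~\ref{ex:q8configs}, but the key number-theoretic claim --- that for every $\delta\le F_q$ one can choose a coprime $\delta'<\delta$ so that the sum of partial quotients of $\delta/\delta'$ is at most $q$ --- is not obvious and needs proof (taking $\delta'\approx\delta/\phi$ does not by itself control the partial-quotient sum for arbitrary $\delta$), and the ``inert queen'' patch must be shown to preserve vertex-hood and not accidentally drop the denominator.  For (A), the move-graph/spanning-tree framework and the observation that cycling through the four queen slopes generates the Fibonacci recurrence are reasonable heuristics, but you yourself flag the crux: the leaf-deletion reduction with factor-$\phi$ control is unproved, and configurations with interior pieces or highly connected move graphs are exactly where such a reduction could fail.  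Neither half is close to complete, and settling either one would already go well beyond what the paper establishes.
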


\medskip
The discrete Fibonacci spiral is more complicated than the golden parallelogram.  
For each three-move piece $\pP$, the same linear transformation of the golden rectangle of $q$ semiqueens, independent of $q$, gives golden parallelogram configurations with the largest known denominator for $q$ copies of $\pP$.  %The transformation is the same for every number of pieces.
With four-move pieces, that is no longer true.  The linear transformation of the queen's Fibonacci spiral needed to get largest known denominators depends on $q$; in particular, its dilation factor increases as $q$ does.  
Consider the nightrider $\pN$.  In the least integral expansions of Figure~\ref{fig:n41} $\triangle\pN_1\pN_2\pN_3$ has width 72 in the first picture, $3\cdot72$ in the second, and $9\cdot72$ in the third.
Similarly, when we fit four nightriders in the next Fibonacci spiral, the smallest triangle dilates by a factor of four as each new piece is added, from width 128 to 512 to 2048.

We define a {\em twisted Fibonacci spiral} of $q$ pieces $\pP$ with moves $\{m_1,m_2,m_3,m_4\}$ by the move equations 
\[
\cH^{m_1}_{2i,2i+1},\quad \cH^{m_2}_{2i+1,2i+2},\quad \cH^{m_3}_{1,3},\quad \cH^{m_3}_{2i,2i+3},\quad \cH^{m_4}_{2i+1,2i+4}
\] 
for all $i$ such that both indices fall between $1$ and $q$, inclusive, and three fixations to ensure that the square box bounding all the pieces is as small as possible to make that all coordinates are integral.

By varying the choice of $m_1$, $m_2$, $m_3$, and $m_4$ we get different vertex configurations.  Consider nightriders in the following example.

\begin{exam}\label{ex:twisted}
The most obvious analog for nightriders of the queens' discrete Fibonacci spiral is that in Figure~\ref{fig:n41}, for which $m_1=1/2$, $m_2=-2/1$, $m_3=2/1$, and $m_4=-1/2$.  There is an alternate vertex configuration with larger denominator, the ``expanding kite'' shown in Figure~\ref{fig:n42}, which is a twisted Fibonacci spiral with $m_1=-2/1$, $m_2=1/2$, $m_3=2/1$, and $m_4=-1/2$.

\begin{figure}[htbp]
\includegraphics[height=1.7in]{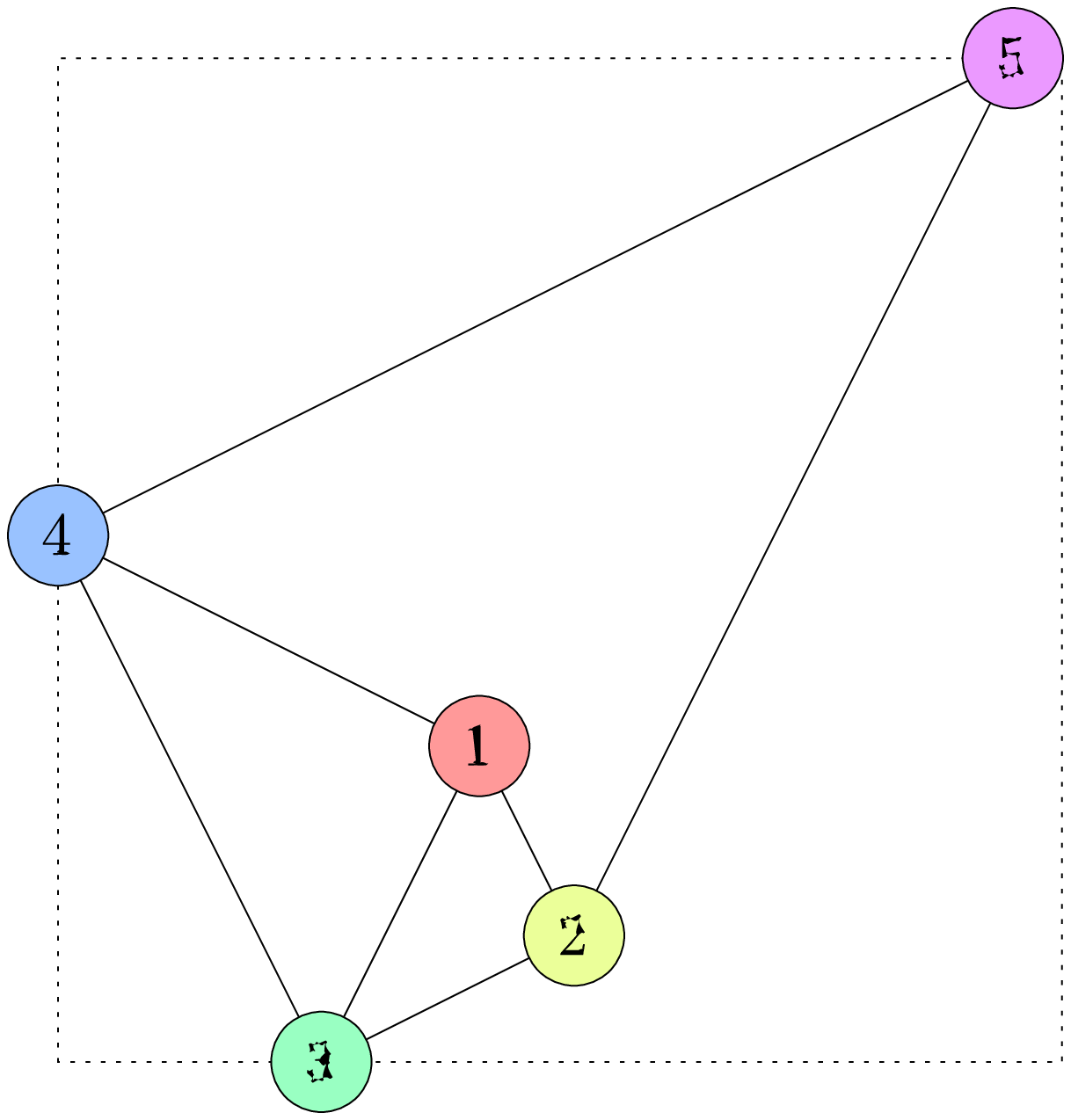}
\includegraphics[height=1.7in]{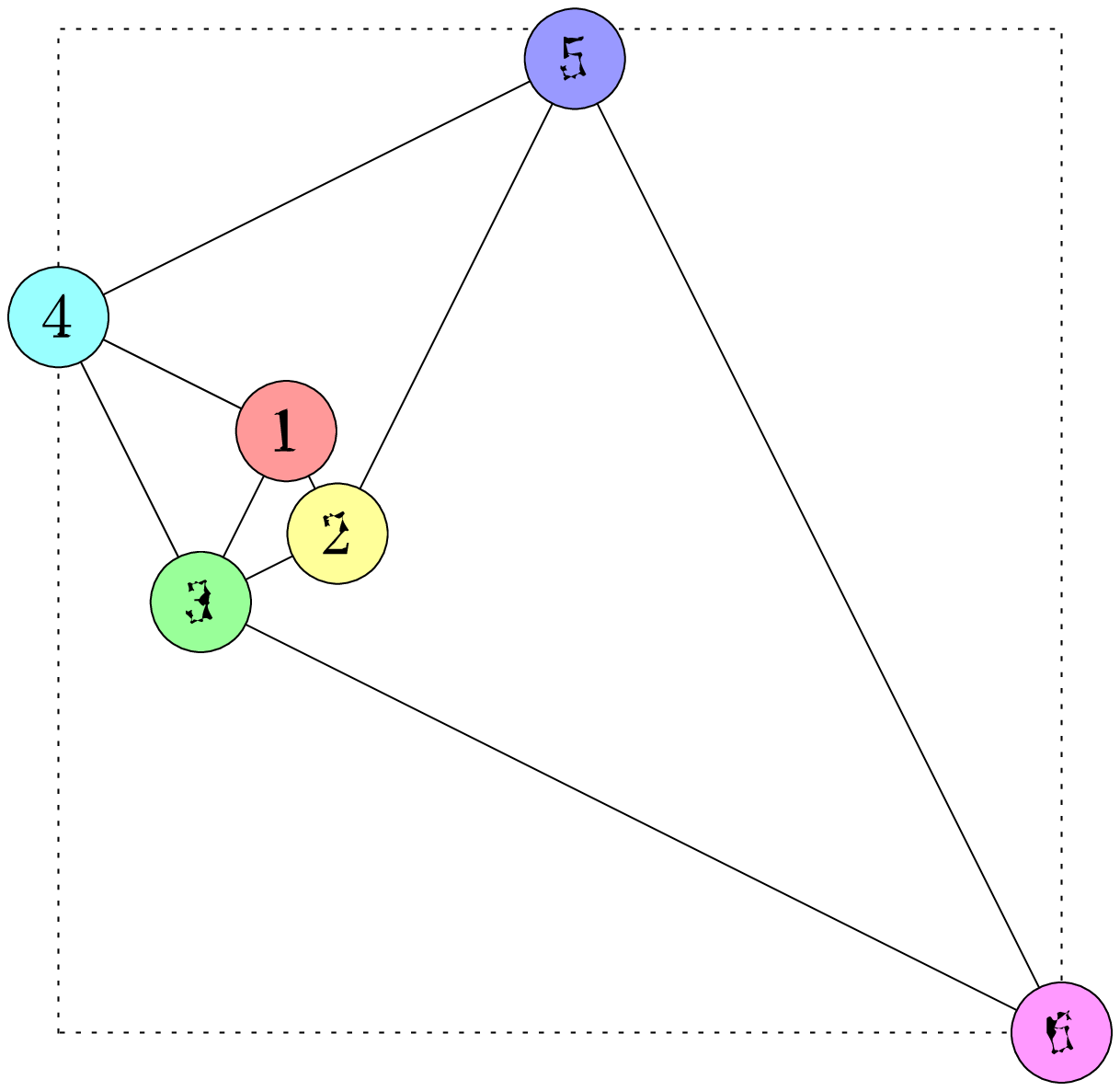}
\includegraphics[height=1.7in]{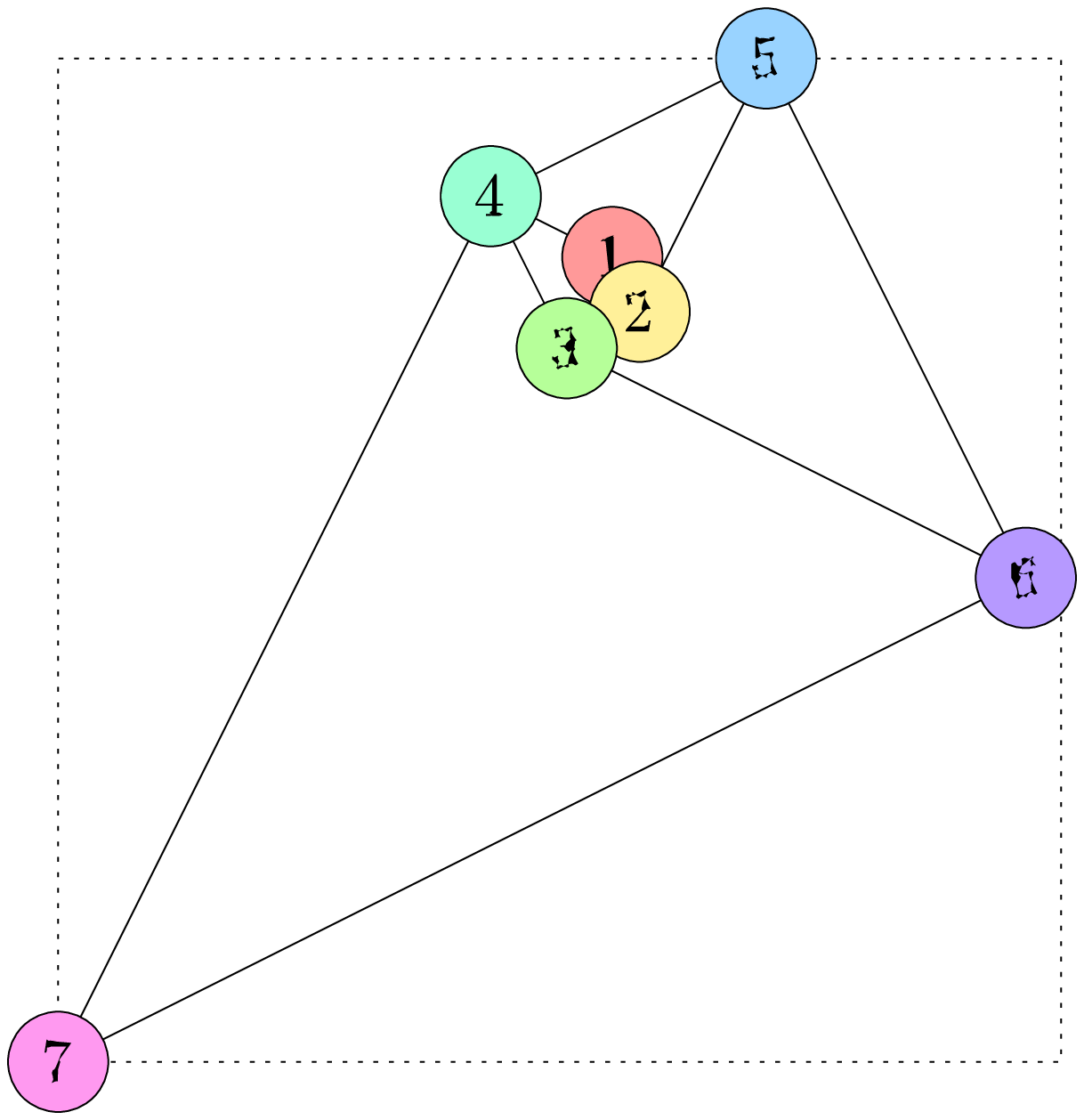}
\caption{A twisted Fibonacci spiral involving 5, 6, and then 7 nightriders.  Each configuration, in integral form, is contained in the next, expanded by a factor of 3.  The configurations in fractional form have denominators 286, 1585, and 8914.}
\label{fig:n41}
\end{figure}

\begin{figure}[htbp]
\includegraphics[height=1.7in]{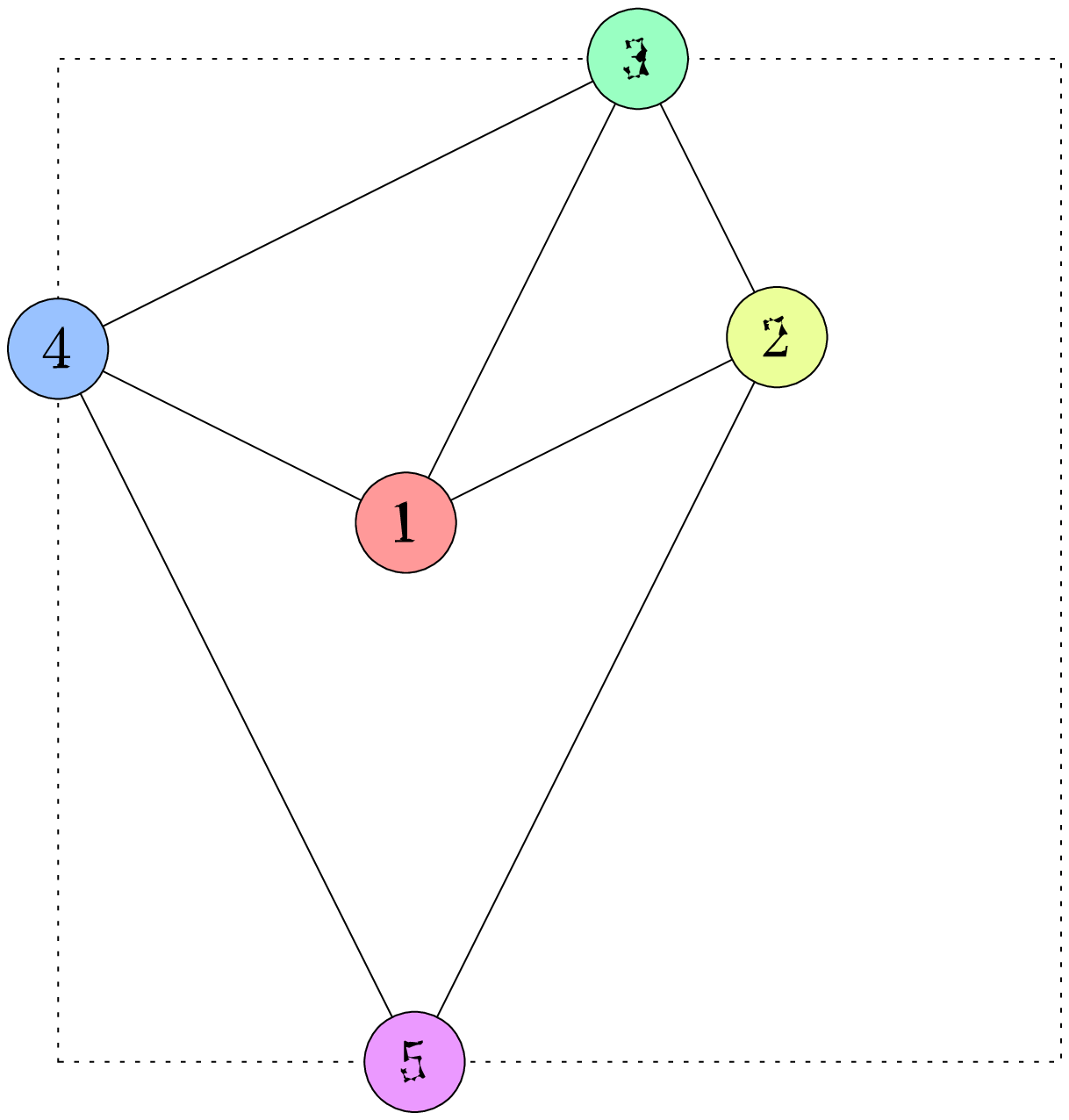}
\includegraphics[height=1.7in]{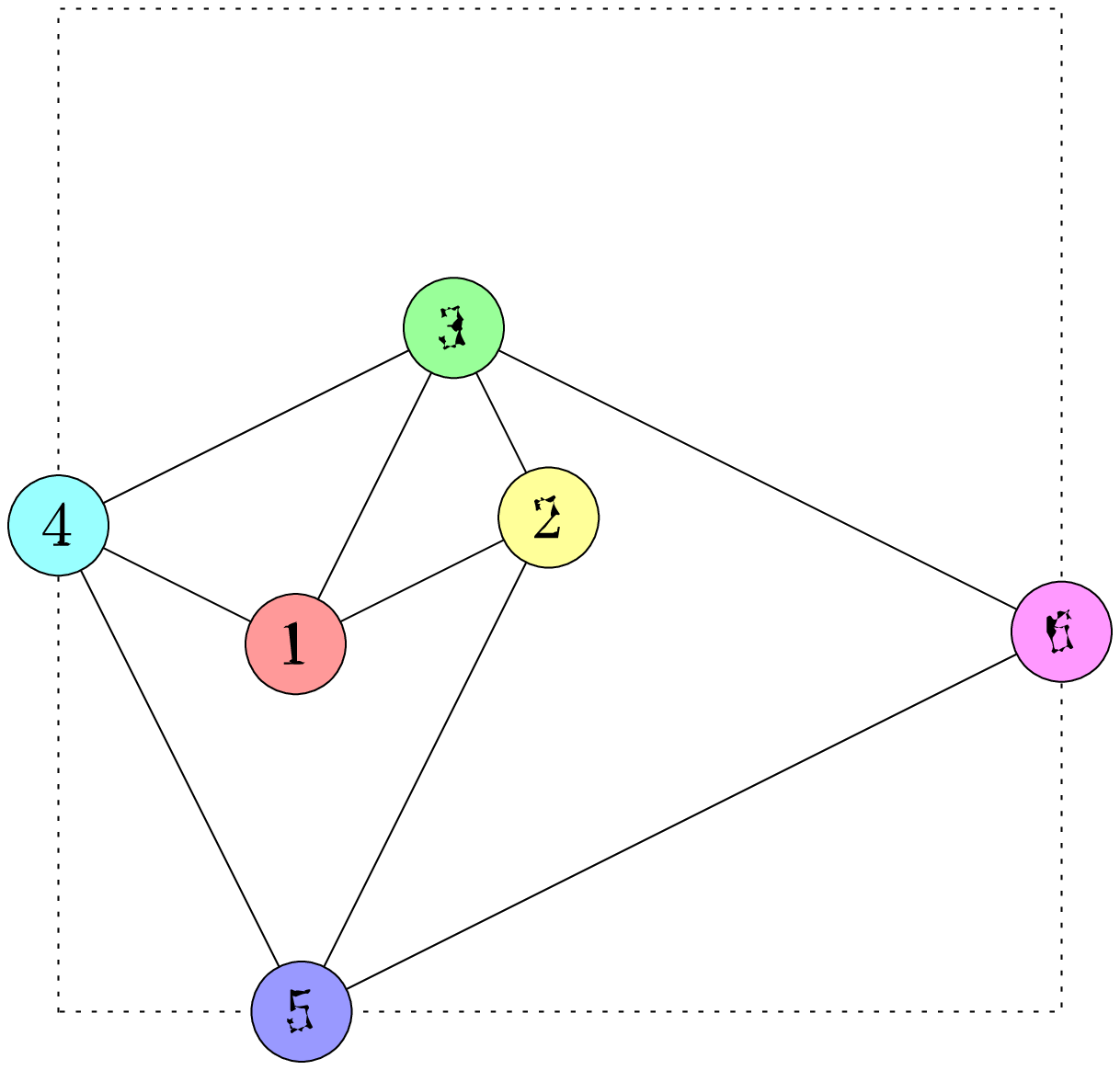}
\includegraphics[height=1.7in]{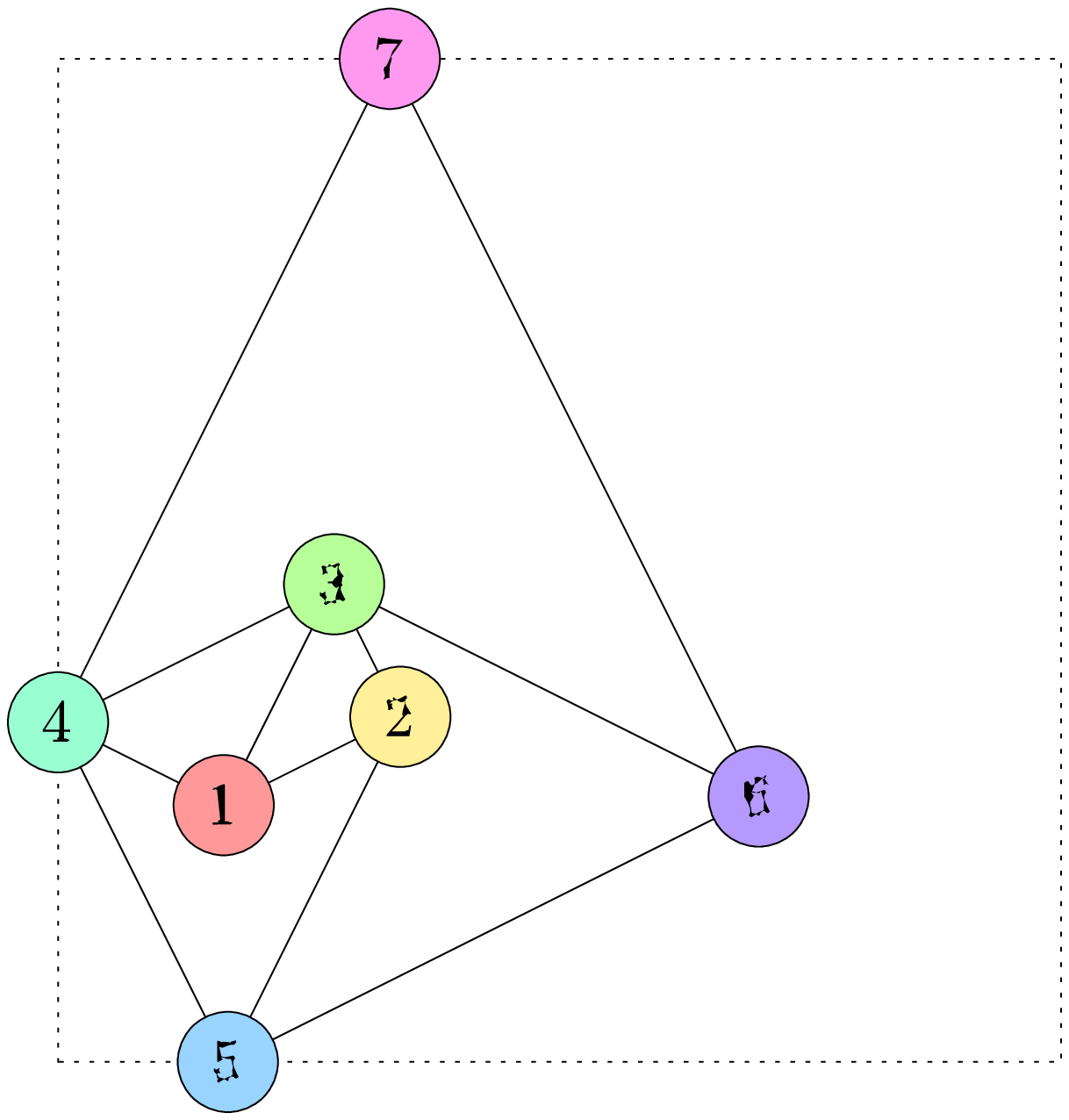}
\caption{The same expanding kite configuration, successively involving 5, 6, and then 7 nightriders.  In integral form, each configuration is contained in the next, expanded by a factor of 4.  In fractional form the configurations have denominators 346, 2030, and 11626.}
\label{fig:n42}
\end{figure}
\end{exam}

\begin{conj} 
For any piece $\pP$, there is a vertex configuration that maximizes the denominator and is a twisted Fibonacci spiral.
\end{conj}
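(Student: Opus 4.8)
The plan is to pin down $D_q(\pP)$ by a matching lower and upper bound: to exhibit a twisted Fibonacci spiral that is an honest vertex of $(\cube,\cA_\pP^q)$ with a computable denominator, and to show that no vertex can beat it. The lower bound should be within reach of the tools already developed. First I would check that the $2q-3$ move equations together with the $3$ fixations in the definition of a twisted Fibonacci spiral are linearly independent, so that they cut out a single point and the configuration genuinely is a vertex. The structural fact behind this is that the move equations \emph{alone} have a $3$-dimensional solution space --- the $2$-dimensional space of simultaneous translations of all pieces, plus the $1$-dimensional scaling freedom $\bz\mapsto\lambda\bz$, which preserves every move equation --- because the underlying attack graph (the path through the pieces $1,2,\dots,q$ in order, augmented by the long edges $\{1,3\}$, $\{2i,2i+3\}$, and $\{2i+1,2i+4\}$) is \emph{minimally similarity-rigid}: building it up two pieces at a time, each new pair contributes two coordinates and two move equations while keeping the kernel $3$-dimensional, an induction of Laman type. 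The three fixations then fix translation and scale, so by Lemma~\ref{L:DeltaN} the denominator $\Delta(\bz)$ is the least dilation making the shape integral. Expanding the coordinates along the spiral produces a second-order linear recurrence driven by the chosen move vectors $m_1,\dots,m_4$; as in the explicit formulas for the semiqueen and trident it is a Fibonacci-type recurrence, so $\Delta(\bz)$ is an explicit combination of Fibonacci numbers and of the entries $w_1c_1,w_1d_1,w_2c_2,\dots$, in the spirit of Proposition~\ref{P:triangular}. One then optimizes over the finitely many ordered $4$-tuples $(m_1,m_2,m_3,m_4)$ from $\M$ and their sign patterns.

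The real work is the upper bound. Given an arbitrary vertex $\bz$, I would pass to its attack graph $G$ --- pieces as vertices, the move hyperplanes $\cH^{d/c}_{ij}$ containing $\bz$ as edges --- together with its set of fixations, and split $G$ into connected components. After deleting move equations that are redundant for determining the shape of a component, one is left, on a component with $k$ pieces, in one of three regimes: $2k-3$ move equations and $3$ fixations (a minimally similarity-rigid shape, then placed --- the regime of the golden parallelogram and the twisted Fibonacci spiral), $2k-2$ and $2$ (a shape rigid up to translation, i.e.\ a rigid cycle or its many-move analogue), or $\leq 2k-4$ and $\geq 4$ (a flexible shape forced onto the boundary). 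Since translations and the boundary fixations $x_i,y_i\in\{0,N\}$ contribute nothing to $\Delta(\bz)$, the denominator of $\bz$ is the least common multiple over components of the smallest $N$ admitting an integral realization, and it remains to bound that quantity, in each regime, by the denominator of the extremal twisted Fibonacci spiral on $q$ pieces. The crux is the first regime, which I would recast as a purely combinatorial--number-theoretic extremal problem: \emph{among all minimally similarity-rigid graphs on $k$ vertices whose edges are labelled by move directions drawn, with signs, from $\M$, which labelled graph has the largest smallest-integral-realization?} The intended answer is the overlapping chain of triangles underlying the twisted Fibonacci spiral, to be argued in three steps: (i) every minimally similarity-rigid labelled graph decomposes along ``similarity hinges'' into blocks, each combinatorially such a chain; (ii) concatenating blocks is superadditive, so a single block on all $q$ pieces dominates; and (iii) within one chain the coordinates obey a second-order recurrence in the move vectors whose dominant root, and whose realization arithmetic, are simultaneously maximized by the choice of moves and signs that produces a Fibonacci recurrence. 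The remaining two regimes should then be dominated by comparison with the first.

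I expect steps (i)--(iii), and the domination of the non-generic regimes, to be the main obstacle. There is no off-the-shelf rigidity theory for the similarity group (Laman's theorem being for the Euclidean group), so the classification of minimally similarity-rigid labelled graphs and the extremality of the chain topology must be developed essentially from scratch; and the extremal analysis carries genuine number-theoretic content, since it is the \emph{denominator} of the tightest integral realization, not merely its geometric size, that must be maximized --- two chains of the same size can have different denominators, and the optimum must be uniform in $q$ and in $q\bmod 4$. A realistic intermediate goal is to settle the conjecture first for partial queens and for the $N$-riders, where $\M$ is small enough to enumerate the chain topologies by hand, before attempting $\M$ arbitrary. Finally, for pieces with fewer than four moves the statement is to be read degenerately: a golden parallelogram (Section~\ref{sec:config3p}) is a twisted Fibonacci spiral with $m_4$ taken equal to one of $m_1,m_2,m_3$, so the three-move case is Conjecture~\ref{Cj:3movemax}, while for one- and two-move riders Theorem~\ref{T:maxcd} and Proposition~\ref{P:denom2move} already determine the (bounded) denominator --- the real content of the conjecture thus lies in pieces with three or more moves.
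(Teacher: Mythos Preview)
The paper does not prove this statement; it is stated as a conjecture, immediately after Example~\ref{ex:twisted}, with no argument attached. So there is no ``paper's own proof'' to compare against, and your proposal should be read as a research plan toward an open problem rather than as a candidate proof to be checked.

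Read that way, your outline is reasonable in its architecture --- lower bound by exhibiting the spiral as a genuine vertex, upper bound by classifying vertex configurations via their attack graphs --- but it is not a proof, and you say as much yourself. The lower-bound half is plausible and close to what the paper already does for the golden parallelogram: verifying that the $2q-3$ move equations plus three fixations are independent is a finite linear-algebra check, and Lemma~\ref{L:DeltaN} then reads off $\Delta(\bz)$. Even here, though, note a subtlety you glide over: for a general four-move piece the coordinate recurrence along the spiral is \emph{not} the Fibonacci recurrence with coefficients $1,1$ but a recurrence whose coefficients depend on the $w_i m_i$; the paper's nightrider examples (Figures~\ref{fig:n41}--\ref{fig:n42}) show growth factors $3$ and $4$ per step, not $\phi$, and the optimal ordered $4$-tuple $(m_1,m_2,m_3,m_4)$ can change with $q$. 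So ``a Fibonacci-type recurrence'' hides real work already at the lower bound.

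The upper bound is where the genuine gap lies, and you identify it correctly: steps (i)--(iii) are each open. There is no existing structure theory for ``minimally similarity-rigid'' graphs with direction-labelled edges, so step~(i) is not a citation but a program. Step~(ii), the superadditivity of denominators under concatenation at a similarity hinge, is not obviously true: denominators are least common multiples, not sums, and gluing two components can produce cancellation rather than growth. Step~(iii) asks you to prove that among all second-order integer recurrences assembled from the move vectors, the one underlying the twisted spiral maximizes the \emph{denominator} (not the magnitude) of the $q$th term --- an arithmetic extremal problem for which you offer no mechanism. Finally, the ``domination of the non-generic regimes'' is asserted but not argued; the two-move theory (Theorem~\ref{T:Arvind}) already shows that boundary-heavy configurations can carry nontrivial denominators, and there is no a~priori reason they are always beaten by a spiral. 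In short: the skeleton is sensible, but none of the load-bearing steps is established, and the paper's authors left this as a conjecture for precisely these reasons.
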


Unlike for queens, the maximum denominator of a vertex for $q$ of a general piece $\pP$, call it $\Delta_q(\pP)$, is difficult to compute.  Furthermore, not every integer from $1$ to $\Delta_q$ may be a vertex denominator.  As an example, with three nightriders the only possible denominators are $1,2,3,4,5,10$.

%%%%%%%%%%%%%%%%%%%%%%%%%%%%%%%%%%%%%%%%%%%%%%%%%%%%%%%%%
\newpage
\section*{Dictionary of Notation}

%\noindent LEFT MARGIN ALIGNMENT \hfill RIGHT MARGIN ALIGNMENT

%\hspace{-.27in}
%\begin{minipage}[t]{3.5in}\vspace{0in}
%Latin letters
\begin{enumerate}[]
\item $(c,d)$ -- coordinates of move vector (p.\ \pageref{d:mr})
\item $d/c$ -- slope of line or move (p.\ \pageref{d:slope-hyp})
\item $h$ -- \# of horizontal, vertical moves of partial queen (p.\ \pageref{d:partQ})
\item $k$ -- \# of diagonal moves of partial queen (p.\ \pageref{d:partQ})
\item $m = (c,d)$ -- basic move (p.\ \pageref{d:mr})
\item $m^\perp = (d,-c)$ -- orthogonal vector to move $m$ (p.\ \pageref{d:mrperp})
\item $n$  -- size of square board (p.\ \pageref{d:n})
\item $p$ -- period of quasipolynomial (p.\ \pageref{d:p})
\item $q$ -- \# of pieces on a board (p.\ \pageref{d:q})
\item $u_\pP(q;n)$ -- \# of nonattacking unlabelled configurations (p.\ \pageref{d:indistattacks})
\item $z=(x,y)$, $z_i=(x_i,y_i)$ -- piece position (p.\ \pageref{d:config})
\item $\bz=(z_1,\ldots,z_q)$ -- configuration (p.\ \pageref{d:config})
\item $\phi$ -- golden ratio $(1+\sqrt5)/2$ (p.\ \pageref{d:phi})
\end{enumerate}
\smallskip
%\end{minipage}
%

%\begin{minipage}[t]{3.5in}\vspace{0in}
%Uppercase Roman
\begin{enumerate}[]
\item $D$, $D_q(\pP)$ -- denominator of inside-out polytope (p.\ \pageref{d:D}) 
\item $F_q$ -- Fibonacci numbers ($F_0=F_1=1$) (p.\ \pageref{d:Fib})
\item $\M$ -- set of basic moves (p.\ \pageref{d:moveset})
\item $\Delta(\bz)$ -- denominator of vertex $\bz$ (p.\ \pageref{d:Deltabz}) 
\end{enumerate}
\smallskip
%Geometry
\begin{enumerate}[]
\item $\cA_\pP$ -- move arrangement of piece $\pP$ (p.\ \pageref{d:AP})
\item $\cB, \cB^\circ$ -- closed, open board: usually the square $[0,1]^2$, $(0,1)^2$ (p.\ \pageref{d:cB})
\item $\cE$ -- edge line of the board (p.\ \pageref{d:cE})
\item $\cH_{ij}^{d/c}$ -- hyperplane for move $(c,d)$ (p.\ \pageref{d:slope-hyp})
%\item $\cL(\cA_\pP)$ -- intersection lattice  (p.\ \pageref{d:L})
\item $\cP, \cP^\circ$ -- closed, open polytope (p.\ \pageref{d:cP})
\item $\cube, \ocube$ -- closed, open hypercube (p.\ \pageref{d:cP})
\item $(\cP,\cA_\pP), (\cube,\cA_\pP)$ -- inside-out polytope (p.\ \pageref{d:cP})
\item $(\cP^\circ,\cA_\pP), (\ocube,\cA_\pP)$ -- open inside-out polytope (p.\ \pageref{d:cP})
\item $\cX_{ij}:=\cH_{ij}^{1/0}$ -- hyperplane of equal $x$ coordinates (p.\ \pageref{d:XY})
\item $\cY_{ij}:=\cH_{ij}^{0/1}$ -- hyperplane of equal $y$ coordinates (p.\ \pageref{d:XY})
\end{enumerate}
\smallskip
\begin{enumerate}[]
\item $\bbR$ -- real numbers
\item $\bbR^{2q}$ -- configuration space (p.\ \pageref{d:configsp}) %Cite PART I
\item $\bbZ$ -- integers
\end{enumerate}
\smallskip
%Pieces
\begin{enumerate}[]
%\item $\pB$ -- bishop (p.\ \pageref{B})
\item $\pN$ -- nightrider (p.\ \pageref{N})
\item $\pP$ -- piece (p.\ \pageref{d:P})
%\item $\pQ$ -- queen (p.\ \pageref{Q})
\item $\pQ^{hk}$ -- partial queen (p.\ \pageref{d:partQ})
%\item $\pR$ -- rook (p.\ \pageref{R})
\end{enumerate}
%\end{minipage}

%%%%%%%%%%%%%%%%%%%%%%%%%%%%%%%%%%%%%%%%%%%%%%%%%%%%%%%%
\newpage
\newcommand\otopu{\r{u}}

\end{document}